\definecolor{refcol}{rgb}{0.1,0,0.7}
\newcommand{\norm}[2][]{\| #2 \|_{#1}}
\newcommand{\normc}[2][]{\left\| #2 \right\|_{#1}}
\newtheorem{theorem}{Theorem}[section]
\newtheorem{corollary}[theorem]{Corollary}
\newtheorem{definition}[theorem]{Definition}
\newtheorem{lemma}[theorem]{Lemma}
\newtheorem{proposition}[theorem]{Proposition}
\newtheorem{assumption}[theorem]{Assumption}
\newcommand{\E}[1]{\mathbb{E}[#1]}
\newcommand{\Ec}[1]{\mathbb{E}\left[#1\right]}
\newcommand{\Pc}[1]{\mathbb{P}\left(#1\right)}
\newcommand{\bbE}{\mathbb{E}}
\newcommand{\Wj}{W^{j}}
\newcommand{\tjj}{\theta^{j,J}}
\newcommand{\C}[1]{\mathcal{C}_{\beta}(#1)}
\newcommand{\M}[1]{\mathcal{M}_{\beta}(#1)}
\newcommand{\Rh}{\rho_t^J}
\newcommand{\tm}{\overline{\theta}}
\newcommand{\ofb}{\omega^f_\beta}
\newcommand{\Li}{\lambda^{-1}}
\renewcommand{\i}[2]{\left\langle #1, #2 \right\rangle}
\newcommand{\supi}[1]{\underset{#1}{\sup}}
\newcommand{\sumj}{\frac{1}{J}\underset{j=1}{\overset{J}{\sum}}}
\newcommand{\sumi}[2]{\underset{#1}{\overset{#2}{\sum}}}
\newcommand{\mt}[1]{|#1|^2}
\newcommand{\mtp}[1]{|#1|^{2p}}
\newcommand{\R}{\mathbb{R}}
\newcommand{\PM}{\mathcal{P}}
\newcommand{\via}[2]{\overset{\text{#1}}{#2}}
\newcommand{\Tr}[1]{\text{Tr}(#1)}
\newcommand{\F}[1]{\mathbb{F}_{\varphi,t}(#1)}
\renewcommand{\L}[1]{L^{#1}}
\newcommand{\Lconv}[1]{\overset{\L{#1}}{\to}}
\DeclareMathOperator{\diag}{diag}
\DeclareMathOperator{\LV}{\mathcal LV}
\definecolor{jz}{rgb}{0,0,0}
\newcommand{\jz}[1]{{\color{jz}{#1}}}
\definecolor{mk}{rgb}{0,0,0}
\newcommand{\mk}[1]{{\color{mk}{#1}}}
\definecolor{sw}{rgb}{0,0,0}
\newcommand{\sw}[1]{{\color{sw}{#1}}}
\title{On the mean-field limit of consensus based methods}
\author[1]{Marvin Koß}
\author[2]{Simon Weissmann}
\author[1]{Jakob Zech}
\date{\today}
\affil[1]{\normalsize
  Universit\"at Heidelberg, Interdisziplin\"ares Zentrum f\"ur Wissenschaftliches Rechnen, D-69120 Heidelberg, Germany\\
 \texttt{koss@cl.uni-heidelberg.de}, \texttt{jakob.zech@uni-heidelberg.de}
}
\affil[2]{\normalsize
  Universit\"at Mannheim, Mathematisches Institut, D-68159 Mannheim, Germany\\
\texttt{simon.weissmann@uni-mannheim.de}
}
\begin{document}
\maketitle

\begin{abstract}
Consensus based optimization (CBO) employs a swarm of particles evolving as a system of stochastic differential equations (SDEs). Recently, it has been adapted to yield a derivative free sampling method referred to as consensus based sampling (CBS). In this paper, we investigate the ``mean field limit'' of a class of consensus methods, including CBO and CBS. This limit allows to characterize the system's behavior as the number of particles approaches infinity. Building upon prior work such as \cite{huangcbo}, we establish the existence of a unique, strong solution for these finite-particle SDEs. We further provide uniform moment estimates, which allow to show a Fokker-Planck equation in the mean-field limit. Finally, we prove that the limiting McKean-Vlasov type SDE related to the Fokker-Planck equation admits a unique solution.
\end{abstract}

\section{Introduction}
Consensus based optimization (CBO) methods comprise a class of stochastic interacting particle systems utilized as metaheuristic optimization techniques for non-convex and high-dimensional optimization problems. Metaheuristic methods, which aim to design efficient algorithms for solving challenging optimization problems effectively, have garnered significant attention in recent years \cite{pso, cbo,simulatedannealing,bonyadi}. These methods can be tailored to a wide array of optimization challenges, rendering them suitable for diverse applications, including machine learning, data science and engineering, among others.

Interacting particle optimization methods, e.g.\ \cite{pso,poli},
blend the exploration of complex landscapes inherent to the underlying cost function with the exploitation of experiential knowledge stored within the particle system. Through interactions among the particles, these methods have demonstrated promising success in avoiding local minima and discovering global solutions.

At 
its core, CBO, as introduced in \cite{cbo}, can be motivated as a system of multiple agents that collaboratively work toward finding the optimal solution, with the ultimate goal of reaching a consensus. More precisely, it considers the minimization problem
\begin{equation}
\min_{x\in\R^d}\ f(x),
\end{equation}
where $f:\R^d\to\R$ is a given cost function. The stochastic dynamical system employed by CBO relies solely on the evaluation of this cost function and remains derivative-free. In other words, the scheme does not involve the computation of (potentially costly or unobtainable) gradients.
In comparison to other particle swarm optimization methods, CBO also avoids the need for computing $\arg\min_{j=1,\dots,J}\ f(x^{j})$ across the particle system $(x^{j})_{j=1,\dots,J}$. Instead, it introduces a smooth approximation through a weighted average over the particle system. The weight function depends on $f$ and a tuning parameter $\beta>0$, and is defined by
\begin{equation}
    \label{eq:laplacefunctional}
    \ofb:=\begin{cases}
      \mathbb R^d \to \mathbb R\\
      {x}\mapsto \exp{(-\beta f(x))}.
    \end{cases}
\end{equation}
By construction, the weight function gives more weight to $x\in\R^d$ with low value of the cost
function $f(x)$. We define
$\mathcal M_\beta: \mathcal P (\R^d) \to \R^d$ with
\begin{equation}\label{eq:weightedaverage}
    \M{\eta}:= \frac{\int_{\R^d}x e^{-\beta f(x)}\mathrm{d}\eta(x)}{\int_{\R^d}e^{-\beta f(x)}\mathrm{d}\eta(x)}
\end{equation}
as the operator mapping a probability measure $\eta\in\mathcal P(\R^d)$ to its weighted average. The weighted average of the interacting particle system $(x^{j})_{j=1,\dots,J}$ can then be computed as
\[ \M{\eta^J}= \frac{1}{\sum_{j=1}^J e^{-\beta f(x^{j})}}\sum_{j=1}^J x^{j} e^{-\beta f(x^{j})}\, , \]
where $\eta^J=\frac1J\sum_{j=1}^J \delta_{x^{j}}\in\mathcal{P}(\R^d)$ denotes the empirical measure over the particle system. In the limit for $\beta\to\infty$ one may exploit the fact that
\cite{largedeviations}
\begin{align}
    \underset{\beta\to\infty}{\lim}{-\frac1\beta \log \int_{\R^d}\ofb(x)\mathrm{d}\eta(x)} &= \underset{x\in\text{supp}(\eta)}{\inf} f(x)\,
\end{align}
to justify the smooth approximation
\[\M{\eta^J} \approx \underset{x^{j,J},j=1,\dots,J}{\arg\min}\ f(x^{j,J}). \]
CBO \mk{uses this idea} to evolve a swarm of $J$ particles $(\theta^{j,J}_t)_{j=1,\ldots,J}$ according to competing terms for both local exploration around the current optimum of the particles and global exploration away from it \mk{by obtaining the optimum through the application $\M{\rho^J_t}$, where $\rho^J_t:=\frac1J\sum_{j=1}^J \delta_{\theta^{j,J}_t}$}. 
Local exploration is thus governed by moving the particles to the weighted average defined in \eqref{eq:weightedaverage}, while global exploration is incorporated through random pertubation depending on the distance of the particles from it. As introduced in \cite{cbo}, the interacting particle system of CBO is described as a coupled system of stochastic differential equations (SDEs) of the form
\begin{equation}\label{eq:original_CBO}
d\theta_t^{j,J} = -(\theta_t^{j,J}-\M{\Rh})\, \mathrm{d}t + \lambda^{-1} \diag_d(|\theta_t^{j,J}-\M{\Rh}|)\, dW_t^{j}\, 
\end{equation}
with $j=1,\dots,J$ and ensemble size $J\ge2$, where $(W_t^{j})_{t\in[0,T]}$ are independent Brownian motions in $\R^d$. Here, 
for $x=(x_1,\dots,x_d)^\top\in\R^d$ we write
\begin{equation*}
\diag_d(x) = \diag(x_1,\dots,x_d)\in\R^{d\times d}\,,
\end{equation*}
to denote a diagonal matrix. We assume that the initial ensemble is drawn as iid samples of some initial distribution $\rho_0\in \mathcal P(\R^d)$. 
The heuristic behind the considered system of SDEs is that the particles will meet a consensus in $\M{\rho^J_t}$ which can potentially lead to attaining the global optimum of $f$.

The existing theoretical convergence analysis of the scheme is mainly characterized by the system's behavior in the large ensemble size limit ($J\to\infty$), known as the \textit{mean field limit}. In the mean field limit, CBO can be described by a McKean-Vlasov type SDE 
of the form
\begin{equation}\label{eq:McKean_original}
d\bar \theta_t = -(\bar \theta_t-\M{\rho_t})\, dt + \lambda^{-1} \diag_d(|\bar \theta_t-\M{\rho_t}|)\, dW_t\, 
\end{equation}
where $\lambda>0$ is fixed, 
$(W_t)_{t\in[0,T]}$ is a Brownian motion in $\R^d$, and for each $t\ge0$, $\rho_t$ denotes the law of the solution $\bar \theta_t$ itself. Through It\^o's formula, the evolution of the law $(\rho_t)_{t\in[0,T]}$ can be characterized as a weak solution of the Fokker-Planck equation
\begin{equation}\label{eq:FP_original}
    \frac{\partial \rho}{\partial t} = \nabla \cdot ((\theta-\M{\rho})\rho + \frac{1}{2\lambda}\diag_d(|\theta - \M{\rho}|^2)\nabla \rho ),
\end{equation}
with initial condition $\rho_0$. Both of these equations have been analyzed in terms of well-posedness in \cite{cboanalysis}. While the convergence behavior of CBO is characterized through the mean field limit \eqref{eq:McKean_original} and \eqref{eq:FP_original}, the practical implementation is based on the particle approximation \eqref{eq:original_CBO}. 
A convergence analysis of \eqref{eq:original_CBO} towards the mean field limit \eqref{eq:McKean_original} for $J\to\infty$ has been provided in \cite{huangcbo,gerber2023}. 

\subsection{General class of consensus based methods}

Recently, in \cite{cbs}, the authors introduced a more general class of consensus based methods that allow the scheme to be applied as both an optimization 
and a sampling method. 
For optimization, the particle system may reach a consensus at the global minimum of the considered cost function $f$, i.e.\ all particles coalesce in the minimum. 
For sampling, 
the consensus is characterized as Gaussian approximation of the stationary distribution $\rho_\ast\propto\exp(-f)$ of the considered mean field equation. As such the method lends itself for use in Bayesian inference, to generate approximate samples from a posterior distribution.

Beside the weighted mean $\mathcal M_\beta$, the proposed formulation relies on the weighted covariance matrix $\mathcal C_\beta: \mathcal P(\R^d)\to\R^{d\times d}$ defined as
\begin{equation*}
    \mathcal C_\beta(\eta) :=\frac{\int_{\R^d}(x-\M{\eta})\otimes (x-\M{\eta}) e^{-\beta f(x)}\mathrm{d}\eta(x)}{\int_{\R^d}e^{-\beta f(x)}\mathrm{d}\eta(x)}\,.
\end{equation*}
In the finite particle regime, the proposed algorithm can be written in the form
\begin{equation}\label{eq:CBS_SDE}
    \mathrm{d}\theta_t^{j,J} = -(\theta_t^{j,J}-\M{\Rh})\mathrm{d}t + \sqrt{2\Li\C{\Rh}}\mathrm{d}\Wj_t,
  \end{equation}
for $j=1,\dots,J$, where $(\Wj_t)_{t\in[0,T]}$ again denote independent Brownian motions. The corresponding mean field limit as $J\to\infty$ can again be written as SDE of McKean-Vlasov type
\begin{equation}\label{eq:CBS_McKean}
    \mathrm{d}\tm_t = -(\tm_t-\M{\rho_t})\mathrm{d}t + \sqrt{2\Li\C{\rho_t}}\mathrm{d}W_t,
\end{equation}
where for each $t\ge0$, $\rho_t = \mathbb P ^{\tm_t}$ denotes the law
of $\tm_t$.  One fundamental advantage of the considered formulation
is that the dynamical system remains invariant under affine
transformations. This property is particularly beneficial for sampling
methods based on Markov chain Monte Carlo methods, see, e.g.,
\cite{CF2010,GW2010}.

Depending on the choice of $\lambda$, the authors identify the scheme as an optimization method ($\lambda=1$) or a sampling method ($\lambda = (1+\beta)^{-1}$). This observation has been supported by a detailed theoretical analysis of the mean field limiting system. 
However, the proposed reformulation of consensus based methods in the form of \eqref{eq:CBS_SDE} leaves open questions regarding the well-posedness of the finite particle system as well as the well-defined limit $J\to\infty$. The goal of the present paper is to address this issue. 

\subsection{Related Work}
Various particle based optimizers have been proposed, including Particle Swarm Optimization \cite{pso} and Consensus Based Optimization \cite{cbo}, for which analyses of the mean field regime have been done, respectively, by \cite{mfpso, huangnote} and \cite{cboanalysis,huangcbo}.
For CBO in particular, an analysis in the finite particle regime has been done in \cite{HJK2020}. The authors in \cite{refId0} modify CBO for high-dimensional problems, \cite{STW2023} adapt it and other bayesian inference algorithms for gradient inference, and \cite{Borghi2023} apply CBO to multi-objective optimization. An overview over the research on CBO can be found for example in \cite{Totzeck2022}.

A recently proposed variant is Consensus Based Sampling (CBS) \cite{cbs}, which is amenable both to sampling and optimization and has successfully been applied to rare event estimation in \cite{althaus2023consensusbased}. 
Moreover, a kernelized variant has been proposed by \cite{kernelized}, which is able to deal with multimodal distributions and objective functions with multiple global minima, respectively. \sw{When it comes to the implementation of CBS, the bias due to discretization and Gaussian approximation can be eliminated using a Metropolization step as proposed by \cite{sprungk2023metropolisadjusted}.}

Upon completion of this manuscript we were made aware of the recent study \cite{gerber2023}, which also analyzes convergence to the mean-field limit for both CBO and CBS. Specifically, in Thm.\ 2.6 \& 2.12 of \cite{gerber2023} using a
synchronous coupling approach they show pathwise Wasserstein $p$-chaos \cite[Definition 4.1]{poc1}, which actually implies our pointwise empirical propagation of chaos \cite[Lemma 4.2]{poc1}. 
  Our independent analysis, conceived concurrently with theirs,
differs in that we employ a compactness argument for the proof, considering weak convergence of the empirical measures rather than 
a coupling approach.

\subsection{Our contribution}
We make the following contributions: In the finite particle regime, we make use of a stochastic Lyapunov function in order to 
show well-posedness of the system of SDEs \eqref{eq:CBS_SDE}; i.e.\ 
  we prove existence of unique and strong solutions. For the mean field limit, we verify tightness of the empirical measures along the lines of \cite{mfpso}, and provide a compactness argument which closely follows the arguments in \cite{huangcbo}. These arguments imply weak convergence of the empirical measures  and, using Skorokhod's Representation, $\Tilde{\mathbb{P}}$-a.s. convergence on some appropriate probability space.
Lastly, we adopt the use of the Leray-Schauder fixed point theorem from \cite{cboanalysis} to verify well-posedness of the McKean-Vlasov type SDE \eqref{eq:CBS_McKean} in the form of existence of a unique solution.
\paragraph{Structure} The paper is structured as follows. In Section~2 we investigate the well-posedness of the finite particle SDEs. In Section~3 we investigate the mean field limit of the system of SDEs, by first verifying a weak solution of the Fokker Planck Equation exists, and then continuing to show a unique solution to the associated McKean-Vlasov limiting SDE exists. We only sketch proofs in the main part and refer to the supplementary material for some more results and proofs of the Lemmas and Theorems given in the main part.

\paragraph{Notation}
We write $\mathbb P^X$ for the law of a random variable $X$.
For a vector $x\in\R^d$, we write $|x|$ for its Euclidean norm,
while
$\|C\|_F=\sqrt{\Tr{CC^T}}=\sqrt{\sum_{i=1}^d\sum_{j=1}^d (C_{ij})^2}$ denotes
the 
Frobenius norm of a matrix $C\in\R^{d\times d}$ and $|[C]|_p=\Tr{\sqrt{C^*C}^p}^{\frac1p}$ its Schatten $p$-Norm. Note that the Frobenius norm is compatible with the Euclidean norm. 
We call the space of positive semi-definite $d\times d$-Matrices $\mathbb S^d_\geq$.
Throughout the article, we write particles as $\theta_t^{j,J}$ to make explicit the dependence on the number of particles.
We furthermore write $\PM(\R^d)$ for the space of probability measures on $\R^d$, and for $p\in\mathbb N$, $\PM_p(\R^d)$ 
is the space of probability measures on $\R^d$ with finite $p$-th moment.
We write $C(\R^d)$ for the space of continuous functions from $\R^d\to\R$,
$C^p(\R^d)$ for the space of
$p\in\mathbb{N}$ times continuously differentiable functions from $\R^d\to\R$,
$C_c^p(\R^d)$ for
compactly supported functions in $C^p(\R^d)$, and
$C_b(\R^d)$ for bounded functions in $C(\R^d)$.
Additionally, we'll require the space $C([0,T];\R^d)$
of continuous functions from $[0,T]\to\R^d$.
For $p\ge 1$ we use $\L{p}(\R^d,\rho)$ (or $\L{p}(\rho)$)
to denote the usual Lebesgue spaces of functions from $\R^d\to\R$.

\section{Well-posedness of the Particle Approximation}
Before considering the mean-field limit analysis, we 
firstly ensure that the finite particle SDE \eqref{eq:CBS_SDE} admits a unique and strong solution. This is
the primary concern of this section.
Following
\cite{huangcbo,mfpso},
throughout this paper 
we make 
the following assumptions on the
cost function:

\begin{assumption} \label{asp:costfuns} 
  The cost function $f:\R^d\to\R$ satisfies:
\begin{enumerate}
        \item There exists a constant $\text{Lip}(f)>0$ such that for all $x,y \in\mathbb{R}^d$ it holds
        \begin{equation*}
            |f(x)-f(y)| \leq \text{Lip}(f)\cdot(|x|+|y|)\cdot|x-y|. 
        \end{equation*}
        \item The cost function $f$ is bounded from below with $-\infty<f_*:=\underset{x\in\R^d}\inf f(x)$ and there exists a constant $c_u>0$ such that for all $x\in\mathbb{R}^d$ it holds
        \begin{align}
            f(x)-f_* \leq c_u(1+|x|^2)\,.
        \end{align}
        \item There exist constants $c_l, M >0$ such that for all $x \in\mathbb{R}^d$ with $|x|>M$ it holds
        \begin{align}
            f(x)-f_* \geq c_l|x|^2\,.
        \end{align}
\end{enumerate}
\end{assumption}
We make a further assumption on the finiteness of the sixth moment of the initial measure.
\begin{assumption} \label{asp:moments}
    The initial measure $\rho_0$ satisfies 
    \sw{$\int_{\mathbb R^d}|x|^{6}\mathrm{d}\rho_0(x) < \infty$}, 
    that is $\rho_0\in\mathcal P_6(\mathbb R^d)$.
\end{assumption}

For a fixed weight parameter $\beta>0$, in the following we
  use the operator (cp.~\eqref{eq:laplacefunctional})
\begin{equation*}
    L_\beta :=\begin{cases} \PM(\R^d) \to \PM(\R^d) \\
      \eta \mapsto \ofb\eta/\|\ofb\|_{\L{1}(\eta)}
      \end{cases}
\end{equation*}
which maps a probability measure $\eta$ to the reweighted probability
measure denoted by $L_\beta\eta$. Note that
$\mathcal{M}_\beta(\eta)=\mathbb{E}_{x\sim
  L_\beta\eta}[x]$.  We next state two lemmas that are used
throughout all parts of the mean field limit proof, and in the proof
of well-posedness, too.  The first Lemma has been proven in
\cite[Lemma 3.3]{cboanalysis} and states that second moments of
reweighted probability measures $L_\beta \mu$ can always be bounded by
the second moments of $\mu\in \mathcal P_2(\R^d)$.

\begin{lemma}\label{lem:estimate_weightedmean1} Let $f$ satisfy Assumption \ref{asp:costfuns}  and
  let $\mu\in\PM_2(\R^d)$.
Then 
    \begin{equation*}
      |\mathcal M_\beta (\mu)|^2\le  \int_{\R^d} |x|^2 \mathrm{d}L_{\beta}\mu(x) \leq  b_1 + b_2\int_{\R^d}|x|^2\mathrm{d}\mu(x)\, ,
    \end{equation*}
    where
      $b_1 := M^2+b_2,$
      $b_2 := 2\frac{c_u}{c_l} (1+\frac{1}{\beta c_l}\frac{1}{M^2})$
    are positive reals
    depending on $f$ and $\beta$, but not on $\mu$.
\end{lemma}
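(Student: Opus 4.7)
The plan for this lemma has two parts, of quite different character.

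For the first inequality, I would simply apply Jensen's inequality. The measure $L_\beta\mu$ is a probability measure on $\R^d$, the map $x\mapsto|x|^2$ is convex, and by definition $\mathcal M_\beta(\mu)=\int x\,\mathrm dL_\beta\mu$; hence
\[
|\mathcal M_\beta(\mu)|^2 = \left|\int x\,\mathrm dL_\beta\mu(x)\right|^2 \le \int|x|^2\,\mathrm dL_\beta\mu(x),
\]
which gives the left inequality immediately.

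For the second inequality, I would start from the identity
\[
\int|x|^2\,\mathrm dL_\beta\mu = \frac{\int|x|^2 e^{-\beta f(x)}\,\mathrm d\mu(x)}{Z_\mu},\qquad Z_\mu := \int e^{-\beta f(x)}\,\mathrm d\mu(x),
\]
and split the numerator into the contributions from $\{|x|\le M\}$ and $\{|x|>M\}$. On the compact region the trivial bound $|x|^2\le M^2$ yields a contribution of at most $M^2 Z_\mu$, producing the constant $M^2$ in $b_1$ after dividing by $Z_\mu$. On the tail I would use Assumption~\ref{asp:costfuns}(3), $|x|^2\le (f(x)-f_*)/c_l$, to replace $|x|^2$ by a multiple of $f(x)-f_*$. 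Combining this with Assumption~\ref{asp:costfuns}(2), namely $f(x)-f_*\le c_u(1+|x|^2)$, together with the elementary estimate $\xi e^{-\beta\xi}\le (\beta e)^{-1}$ specialized to $\xi=f(x)-f_*\ge c_l M^2$ on the tail, lets one control the tail integrand by a constant multiple of $(1+|x|^2)\,e^{-\beta f(x)}$. The factor $1/(\beta c_l M^2)$ in $b_2$ emerges from sharpening $\xi e^{-\beta\xi}$ on the restricted range $\xi\ge c_l M^2$ instead of on all of $[0,\infty)$.

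The main obstacle is that $Z_\mu$ is not uniformly bounded below—if $\mu$ concentrates far from the minimum of $f$, then $Z_\mu$ can be exponentially small. It is therefore essential that the tail contribution be rewritten in a form that retains an $e^{-\beta f(x)}$ factor matching the denominator, so that upon division by $Z_\mu$ only an average against the normalized measure $L_\beta\mu$ remains. Only then does the remaining quantity, bounded by a multiple of $\int(1+|x|^2)\,\mathrm d\mu = 1+\int|x|^2\,\mathrm d\mu$, produce the linear estimate; the naive route of separately bounding numerator and denominator via Jensen would give only exponential dependence on $\int|x|^2\,\mathrm d\mu$. Once this ratio structure is respected, a careful algebraic rearrangement delivers the constants $b_1 = M^2+b_2$ and $b_2 = 2(c_u/c_l)(1+1/(\beta c_l M^2))$.
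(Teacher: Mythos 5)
The paper itself provides no proof of this lemma: it is quoted directly from \cite[Lemma~3.3]{cboanalysis}, so the comparison can only be against correctness, not against an in-paper argument.

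Your first inequality via Jensen is fine. For the second inequality, the overall skeleton you describe (split into $\{|x|\le M\}$ versus $\{|x|>M\}$, use Assumption~\ref{asp:costfuns}(3) on the tail to trade $|x|^2$ for $(f-f_*)/c_l$, and worry about $Z_\mu := \int e^{-\beta f}\,\mathrm d\mu$ being small) is the right one, and you correctly diagnose that bounding numerator and denominator separately would give exponential dependence. However, your proposed resolution is circular. You write that one should retain an $e^{-\beta f}$ factor ``so that upon division by $Z_\mu$ only an average against $L_\beta\mu$ remains,'' and then that this remaining $L_\beta\mu$-average is ``bounded by a multiple of $\int(1+|x|^2)\,\mathrm d\mu$.'' That last step is precisely what the lemma asserts and what needs to be proved: the content of the statement is exactly that second moments with respect to $L_\beta\mu$ are controlled by second moments with respect to $\mu$, and you cannot assume it to prove it. The ingredient you invoke, $\xi e^{-\beta\xi}\le(\beta e)^{-1}$ (or its sharpening on $\xi\ge c_lM^2$), does not supply this passage. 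To see why concretely, take $\mu=\delta_a$ with $|a|$ large: then $Z_\mu = e^{-\beta f(a)}$, and in the ratio $\int_{|x|>M}|x|^2 e^{-\beta f}\,\mathrm d\mu / Z_\mu$ the exponentials cancel identically, leaving $|a|^2$; the bound $\xi e^{-\beta\xi}\le(\beta e)^{-1}$ contributes nothing after the cancellation.

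What is actually missing is a mechanism that converts an $L_\beta\mu$-average back into a $\mu$-average. One such mechanism, which yields even sharper constants than stated, is the Chebyshev (negative-association) inequality: since $\xi\mapsto e^{-\beta\xi}$ is decreasing, for any probability measure $\mu$,
\begin{equation*}
\int (f-f_*)\,e^{-\beta(f-f_*)}\,\mathrm d\mu \;\le\; \int (f-f_*)\,\mathrm d\mu \cdot \int e^{-\beta(f-f_*)}\,\mathrm d\mu\,,
\end{equation*}
which follows from $\tfrac12\,\mathbb E\big[(g(\theta)-g(\theta'))(e^{-\beta g(\theta)}-e^{-\beta g(\theta')})\big]\le 0$ with $g=f-f_*$ and $\theta,\theta'$ i.i.d.\ $\sim\mu$. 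Dividing by $Z_\mu$ and combining with Assumption~\ref{asp:costfuns}(2) gives $\int(f-f_*)\,\mathrm dL_\beta\mu\le c_u\big(1+\int|x|^2\,\mathrm d\mu\big)$, and inserting this into your tail bound closes the argument. You should replace the hand-wave about $L_\beta\mu$-averages being bounded by $\mu$-averages with a genuine step of this kind, or with whatever comparable device \cite[Lemma~3.3]{cboanalysis} uses to produce the exact constants $b_1$ and $b_2$.
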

The following result extends the estimate provided by Lemma~\ref{lem:estimate_weightedmean1} to the weighted covariance matrix. This property is crucial in our considered analysis, since the diffusion is given by $\mathcal C_\beta(\rho_t^J)$ and $\mathcal C_\beta(\rho_t)$ respectively. 
\begin{lemma} \label{lem:estimate_weightedcov} Let $f$ satisfy Assumption~\ref{asp:costfuns} 
\sw{and $\mu\in\PM_{2}(\R^d)$.}
Then 
\begin{align}
    \max\Big(\sw{\|\C{\mu}\|_F},\|\sqrt{\C{\mu}}\|^{\sw{2}}_F \Big) \leq \sw{b_1 + b_2 \int_{\mathbb R^d} |x|^2 \mathrm{d}\mu(x)} \,,
\end{align}
where $b_1$, $b_2>0$ are the constants from Lemma~\ref{lem:estimate_weightedmean1}.
\end{lemma}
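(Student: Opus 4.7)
The plan is to reduce both quantities to the trace of $\mathcal{C}_\beta(\mu)$ and then apply Lemma~\ref{lem:estimate_weightedmean1}. The key observation is that $\mathcal{C}_\beta(\mu)$ is positive semi-definite, being a weighted covariance matrix, so all its eigenvalues $\lambda_1,\dots,\lambda_d$ are nonnegative. Then $\|\sqrt{\mathcal{C}_\beta(\mu)}\|_F^2 = \sum_{i=1}^d \lambda_i = \Tr{\mathcal{C}_\beta(\mu)}$, while $\|\mathcal{C}_\beta(\mu)\|_F = \sqrt{\sum_{i=1}^d \lambda_i^2} \le \sum_{i=1}^d \lambda_i = \Tr{\mathcal{C}_\beta(\mu)}$ by nonnegativity of the $\lambda_i$. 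Hence it suffices to bound the trace.

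Next I would compute the trace explicitly using the definition of $\mathcal{C}_\beta(\mu)$ together with $\Tr{v\otimes v} = |v|^2$:
\begin{equation*}
\Tr{\mathcal{C}_\beta(\mu)} = \int_{\R^d} |x-\M{\mu}|^2 \,\mathrm{d}L_\beta\mu(x).
\end{equation*}
Since $\M{\mu} = \bbE_{x\sim L_\beta\mu}[x]$ is the mean of the probability measure $L_\beta\mu$, the standard bias--variance identity (or simply the fact that the mean minimizes the $L^2$ distance) gives
\begin{equation*}
\int_{\R^d} |x-\M{\mu}|^2 \,\mathrm{d}L_\beta\mu(x) \le \int_{\R^d} |x|^2 \,\mathrm{d}L_\beta\mu(x).
\end{equation*}

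Finally, applying Lemma~\ref{lem:estimate_weightedmean1} bounds the right-hand side by $b_1 + b_2\int_{\R^d}|x|^2\mathrm{d}\mu(x)$, and combining with the two eigenvalue estimates above yields the claimed bound for both $\|\mathcal{C}_\beta(\mu)\|_F$ and $\|\sqrt{\mathcal{C}_\beta(\mu)}\|_F^2$. I do not anticipate a serious obstacle here: the only slightly subtle point is recognizing that for a PSD matrix one has the eigenvalue chain $\|C\|_F \le \Tr{C} = \|\sqrt{C}\|_F^2$, which reduces a bound on both norms to a single moment estimate already established in the previous lemma.
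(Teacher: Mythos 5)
Your proposal is correct and shares the core structure of the paper's proof: both reduce the two quantities to $\operatorname{Tr}(\mathcal{C}_\beta(\mu))=\int|x-\mathcal{M}_\beta(\mu)|^2\,\mathrm{d}L_\beta\mu(x)\le\int|x|^2\,\mathrm{d}L_\beta\mu(x)$ and then invoke Lemma~\ref{lem:estimate_weightedmean1}. The one place you diverge is in establishing $\|\mathcal{C}_\beta(\mu)\|_F\le\operatorname{Tr}(\mathcal{C}_\beta(\mu))$. You do this by a direct eigenvalue inequality for PSD matrices ($\sqrt{\sum_i\lambda_i^2}\le\sum_i\lambda_i$ when $\lambda_i\ge0$), which unifies both cases under the single observation that $\|C\|_F\le\operatorname{Tr}(C)=\|\sqrt{C}\|_F^2$ for PSD $C$. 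The paper instead expands $\|\mathcal{C}_\beta(\mu)\|_F^2=\operatorname{Tr}(\mathcal{C}_\beta(\mu)^2)$ as a double integral $\int\int\langle x-\mathcal{M}_\beta(\mu),\hat x-\mathcal{M}_\beta(\mu)\rangle^2\,\mathrm{d}L_\beta\mu(x)\,\mathrm{d}L_\beta\mu(\hat x)$ and applies Cauchy--Schwarz pointwise in the integrand to arrive at $(\operatorname{Tr}(\mathcal{C}_\beta(\mu)))^2$. Both arguments land in the same place; your spectral route is slightly more economical and makes the reduction to the trace explicit up front, whereas the paper's integral computation stays closer to the defining formula of $\mathcal{C}_\beta$. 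No gap in either direction.
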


We now provide global existence of a unique strong solution for the coupled system of SDEs \eqref{eq:CBS_SDE} via stochastic Lyapunov theory. We refer the reader to \cite[Section~3.4]{khasminskii}.  Let us rewrite the system of $d\cdot J$ coupled SDEs \eqref{eq:CBS_SDE} as a single SDE in the joint state space $\times_{j=1}^J \R^{d}$ of the form 
\begin{equation}\label{eq:single_SDE}
{\mathrm d}\theta_t = F(\theta_t)\,{\mathrm d}t + G(\theta_t)\,{\mathrm d}W_t,
\end{equation}
where $\theta_t = (\theta_t^{j})_{j=1,\dots,J}\in\R^{J\cdot d \times 1}$, $W_t = (W_t^{j})_{j=1,\dots,J}\in\R^{J\cdot d\times 1}$ and \sw{for $x=(x^1,\dots,x^J)^\top\in \R^{J\cdot d}$ we defined}
\begin{subequations}\label{eq:FG}
\begin{align}
F(x) &= (\M{\rho^J}-x^{j})_{j=1,\dots,J}\in\R^{J\cdot d\times 1},\quad G(x) = \diag(\sqrt{2\lambda^{-1}\C{\eta^J}})_{j=1,\dots,J}\in\R^{J\cdot d\times J\cdot d},
\end{align}
\end{subequations}
with \sw{empirical measure} $\eta^J := \frac1J\sum_{j=1}^J \delta_{x^{j}}$ and $\diag(B_j)_{j=1,\dots,J}$ denoting a block diagonal matrix with blocks $B_j$, $j=1,\dots,J$.
Moreover, let $(\mathcal F_t^J)_{t\ge0} = (\sigma(\theta_0,W_s,s\le t))_{t\ge0}$ be the canonical filtration generated by the Brownian motion $(W_t)_{t\ge0}$ and the initial state $\theta_0$.

In order to ensure unique existence of strong solutions for the SDE \eqref{eq:single_SDE}, following \cite[Thm.\ 3.5]{khasminskii}, we need to construct a function $V\in C^2(\R^{J\cdot d})$ satisfying
\begin{subequations}\label{eq:conditions}
\begin{equation}\label{eq:generator_condition}
  \LV(x) := \nabla V(x)\cdot F(x)+\frac12 \Tr{G^\top(x)\nabla^2V(x)G(x)}\le cV(x),
\end{equation}
for some $c>0$ and 
\begin{equation}\label{eq:limiting_condition}
\inf_{\|x\|>R}\ V(x) \to \infty\quad  \text{for}\ R\to\infty.
\end{equation}
\end{subequations}
Here, $\mathcal L$ is the generator of the Markov process
$(\theta_t)_{t\in[0,T]}$.  This leads us to our first main result, in
particular the one of well-posedness of the finite particle SDEs:
\begin{theorem} \label{thm:stability} 
Let $f$ satisfy Assumption~\ref{asp:costfuns} and let $\theta_0=(\theta_0^{j})_{j=1,\dots,J}$ be $\mathcal F_0^J$-measurable maps $\theta_0^{j}:\Omega\to\R^d$. For all $T\ge0$ there exists a unique strong solution $(\theta_t)_{t\in[0,T]}$ (up to $\mathbb P$-indistinguishability) of
the system of coupled SDEs \eqref{eq:CBS_SDE}.
\end{theorem}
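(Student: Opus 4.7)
My plan is to apply Khasminskii's stochastic Lyapunov criterion \cite[Thm.~3.5]{khasminskii} to the reformulated SDE~\eqref{eq:single_SDE}. Under Assumption~\ref{asp:costfuns}, the drift $F$ in~\eqref{eq:FG} is locally Lipschitz in $x$, since the normalization $\sum_{j=1}^J e^{-\beta f(x^j)}$ stays strictly positive and $f$ is locally Lipschitz by Assumption~\ref{asp:costfuns}(1); the diffusion $G$ will be addressed below. Once local Lipschitz continuity of the coefficients is in hand, local strong existence and pathwise uniqueness follow by classical SDE theory, so the content of the proof reduces to constructing a Lyapunov function that rules out explosion in finite time.

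The natural candidate is $V(x):=1+|x|^2 = 1+\sum_{j=1}^J|x^j|^2$ for $x=(x^1,\dots,x^J)^\top\in\R^{Jd}$, which is manifestly $C^2$ with $\nabla V(x)=2x$ and $\nabla^2 V(x)=2 I_{Jd}$, and satisfies~\eqref{eq:limiting_condition} since $V(x)\to\infty$ as $|x|\to\infty$. To verify~\eqref{eq:generator_condition} I would estimate the drift contribution by Young's inequality,
\begin{equation*}
\nabla V(x)\cdot F(x) = 2\sum_{j=1}^J x^j\cdot(\M{\eta^J}-x^j) \le -|x|^2 + J|\M{\eta^J}|^2,
\end{equation*}
and the diffusion contribution using that $G^\top(x)G(x)$ is block-diagonal with $J$ identical blocks $2\lambda^{-1}\C{\eta^J}$, giving
\begin{equation*}
\tfrac12\Tr{G^\top(x)\nabla^2 V(x)G(x)} = 2J\lambda^{-1}\Tr{\C{\eta^J}} = 2J\lambda^{-1}\|\sqrt{\C{\eta^J}}\|_F^2.
\end{equation*}

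Applying Lemma~\ref{lem:estimate_weightedmean1} to bound $J|\M{\eta^J}|^2 \le Jb_1 + b_2|x|^2$ and Lemma~\ref{lem:estimate_weightedcov} to bound $\|\sqrt{\C{\eta^J}}\|_F^2 \le b_1 + \tfrac{b_2}{J}|x|^2$, I then obtain
\begin{equation*}
\LV(x) \le J(1+2\lambda^{-1})b_1 + \bigl(b_2(1+2\lambda^{-1})-1\bigr)|x|^2 \le c\,V(x)
\end{equation*}
for a suitable constant $c=c(J,\lambda,\beta)>0$, which establishes~\eqref{eq:generator_condition}. Khasminskii's theorem then yields the claimed unique strong solution on $[0,T]$.

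The main obstacle I anticipate is ensuring local Lipschitzness of $G$ itself, since the matrix square root is only H\"older-$\tfrac12$ on $\mathbb S^d_\ge$ near singular matrices (for instance when all particles coincide and $\C{\eta^J}=0$). Handling this cleanly will require either a spectral argument exploiting that $\C{\eta^J}$ depends smoothly on the particle configuration, so that its eigenvalues are controlled on bounded sets, or invoking a Yamada--Watanabe type local existence result that only requires H\"older regularity of the diffusion. The Lyapunov computation above is self-contained modulo this point.
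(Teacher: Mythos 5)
Your proposal follows essentially the same route as the paper: apply Khasminskii's criterion \cite[Thm.~3.5]{khasminskii} with the Lyapunov function $V(x)=\tfrac{1}{2J}\sum_{j}|x^{j}|^2$ (yours differs only by a rescaling and the additive constant, which the paper introduces at the end as $V_{c_1}=c_1+V$), and derive $\LV(x)\le c_1 + c_2 V(x)$ using the same bounds on $\M{\eta^J}$ and $\C{\eta^J}$ from Lemmas~\ref{lem:estimate_weightedmean1} and~\ref{lem:estimate_weightedcov}. The local-Lipschitz concern you flag for $\sqrt{\C{\eta^J}}$ near singular configurations is handled the same way in the paper---by deferring to \cite[Lemma~2.1]{cboanalysis} for the coefficient regularity---and is ultimately unproblematic: $\C{\eta^J}(x)$ vanishes \emph{quadratically} in the particle displacements as it degenerates, so although $\sqrt{\cdot}$ is only H\"older-$\tfrac12$ on $\mathbb S^d_\geq$, the composition $x\mapsto\sqrt{\C{\eta^J}(x)}$ is still locally Lipschitz in $x$.
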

For the proof of Theorem~\ref{thm:stability} we construct the Lyapunov
function \[V(x):=\frac{1}{2J}\sumi{j=1}J|x^{j,J}|^2\]
  and verify the two
conditions 
in \eqref{eq:conditions} using the previous two lemmas. Note that
the limiting condition \eqref{eq:limiting_condition} is obviously
satisfied.  Beside the existence of unique strong solutions, we also
derive estimates that are uniform in $J$ for the second,
fourth and sixth moments of the particle system in
Section~\ref{sec:supp:momentestimates}. These estimates are crucial to
derive the mean field limit.

\section{Mean-Field Limit}
\label{sec:mfl}
Taking the limit of the number of particles $J\to\infty$ in \eqref{eq:CBS_SDE}, we pass to the mean field limit of the dynamical system. This limit is given by the following SDE of McKean-Vlasov type: 
\begin{equation}\label{eq:CBS_McKean}
    \mathrm{d}\tm_t = -(\tm_t-\M{\rho_t})\mathrm{d}t + \sqrt{2\Li\C{\rho_t}}\mathrm{d}W_t,
\end{equation}
where for each $t\ge0$, $\rho_t = \mathbb P ^{\tm_t}$ denotes the law of $\tm_t$. Deriving the mean field limit in this setting is challenging, as we have nonlinear and nonlocal terms $\M{\rho_t}$ and $\sqrt{\C{\rho_t}}$ arising in the drift and diffusion, which depend on the law of the solution itself. By applying It\^{o}'s formula, one however expects that the law of the solution of \eqref{eq:CBS_McKean} satisfies the Fokker-Planck Equation 
\begin{align}
\label{eq:fpe}
    \frac{\partial \rho}{\partial t} &= \nabla \cdot ((\theta-\M{\rho})\rho + \Li \C{\rho}\nabla \rho ).
\end{align}
We follow the arguments in
\cite[Section 3]{huangcbo}, where the mean field limit is obtained for the system of SDEs \eqref{eq:original_CBO}. For each $\varphi\in C_c^2(\R^d)$ and $t\in[0,T]$,
we define the functional $\mathbb{F}_{\varphi,t}$ on $\mathcal{C}([0,T];\PM(\R^d))$ as
\begin{align}\label{eq:Fvarphi}
    \F{\rho} :&= \int_{\R^d}\varphi(x) \mathrm{d}\rho_t(x) - \int_{\R^d} \varphi(x) \mathrm{d}\rho_0(x) + \int_0^t \int_{\R^d} \mathcal{L} \varphi(x) \mathrm{d}\rho_s(x) \mathrm{d}s \\
    &= \int_{\R^d}\varphi(x) \mathrm{d}\rho_t(x) - \int_{\R^d}\varphi(x) \mathrm{d}\rho_0(x) + \int_0^t \int_{\R^d}\sw{(x - \M{\rho_s})^{\top}}\nabla\varphi(x) \mathrm{d}\rho_s(x) \mathrm{d}s \\
    &\;- \Li \int_0^t \sumi{i=1}{d}\sumi{k=1}{d} \int_{\R^d}\C{\rho_s}_{ik}\frac{\partial^2}{\partial x_i\partial x_k} \varphi(x) \mathrm{d}\rho_s(x) \mathrm{d}s\,.
\end{align}

We then provide the definition of a \textbf{weak solution} to the FPE \eqref{eq:fpe} as defined in \cite[Def. 3.1]{huangcbo} by $\rho\in C([0,T];\PM_2(\R^d))$ satisfying the two conditions
    \begin{enumerate}
        \item For all $\phi \in{C}_b(\R^d)$ and $t_n\to t$ it holds
        \begin{equation}
            \label{eq:time_continuity_condition}
            \int_{\R^d} \phi(x)\mathrm{d}\rho_{t_n}(x) \to \int_{\R^d} \phi(x)\mathrm{d}\rho_{t}(x) 
        \end{equation}
        \item For all $\varphi\in{C}_c^2(\R^d)$ and $t\in[0,T]$ holds 
        \begin{equation}
            \label{eq:functional_condition}
            \F{\rho} = 0,
        \end{equation}
    \end{enumerate}

\sw{By this definition of weak solution we restrict to elements in $C([0,T];\mathcal P(\R^d))$ limiting the type of convergence result to be expected. We make the following distinction between pointwise and pathwise convergence:}

\begin{definition} \label{def:pathvspoint} \cite[Def. 3.25]{poc1} 
\sw{Let $\rho^J:=\frac1J\sum_{j=1}^J\delta_{\theta^{j,J}} \in \mathcal{P}(C([0,T], \R^d))$ be the empirical measure for a particle stochastic process $\{(\theta^{j,J})_{t\in[0,T]}\}_{j=1}^J$ of ensemble size $J$ 
and let $e_t:C([0,T],\R^d)\to\R^d,f\mapsto f(t)$ be the evaluation map. We define the time marginals $\rho_t^J:=(e_t)_\#\rho^J=\rho^J(e_t^{-1}(\cdot))$. Then $\mathbb{P}^{\rho^J}\in \mathcal{P}(\mathcal{P}(C([0,T],\R^d)))$ and $(\mathbb{P}^{\rho^J_t})_{t\in[0,T]}\in C([0,T],\mathcal{P}(\mathcal{P}(\R^d)))$ and we define the following types of convergence:}
\begin{enumerate}
    \item \sw{We say $\{(\mathbb{P}^{\rho^J_t})_{t\in[0,T]}\}_{J\in\mathbb N} \subset C([0,T],\mathcal{P}(\mathcal{P}(\R^d)))$ converges pointwise,} if there exists $(\rho_t)_{t\in[0,T]} \in C([0,T],\mathcal{P}(\R^d))$ such that $\mathbb{P}^{\rho^J_t} \overset{J\to\infty}{\to} \sw{\delta}_{\rho_t}$ in $\mathcal{P}(\mathcal{P}(\R^d))$, with $\sw{\delta}_{\rho_t}$ being a Dirac measure, for every $t\in[0,T]$,
    \item \sw{We say $\{\mathbb{P}^{\rho^J}\}_{J\in\mathbb N}\subset \mathcal{P}(\mathcal{P}(C([0,T],\R^d)))$ converges pathwise}, if there exists $\rho\in \mathcal{P}(C([0,T],\R^d))$ such that $\mathbb{P}^{\rho^J} \overset{J\to\infty}{\to} \sw{\delta}_\rho$, and $\sw{\delta}_\rho$ is a Dirac measure in $\mathcal{P}(\mathcal{P}(C([0,T],\R^d)))$.
\end{enumerate} 
The convergence is weak convergence. \sw{We call elements of $C([0,T],\mathcal{P}(\R^d))$ measure flows.} 
\end{definition} 

Pathwise convergence is stronger than pointwise convergence in the sense that pathwise implies pointwise, but not vice versa. We begin the proof of our following main result, Theorem \ref{thm:main}, by working in $\mathcal{P}(\mathcal{P}(C([0,T];\R^d)))$ in steps 1 and 2, which take place in Section \ref{sec:tight}, and then project to $C([0,T];\mathcal{P}(\mathcal{P}(\R^d)))$ for steps 3 and 4 (transferring the previously obtained weak convergence to the time marginals using \cite[Lemma 2.3 (2)]{huangcbo}), identifying the limit of the sequences in $\mathcal{P}(\mathcal{P}(\R^d))$
for all $t\in[0,T]$. Therefore, our final result yields \textit{pointwise} convergence. 

\sw{In order to guarantee uniqueness of the FPE \eqref{eq:fpe}, we make the following assumption on the solution of the corresponding McKean SDE \eqref{eq:CBS_McKean}:
\begin{assumption} \label{assumpt:evs} \sw{Let $(\rho_t)_{t\in[0,T]}$ be the 
the solution to the McKean SDE \eqref{eq:CBS_McKean}. We assume} that there exists $\sw{\bar \sigma}>0$ such that 
\[\sw{\C{\rho_t} - \bar\sigma\, {\mathrm{Id}} \succ 0}\]
\sw{for all $t\in[0,T]$}.
\end{assumption}
We note that for Gaussian initial distributions $\rho_0 = \mathcal N(m_0,C_0)$, Assumption~\ref{assumpt:evs} is satisfied for both CBO ($\lambda=1$) and CBS ($\lambda = (1+\beta)^{-1}$) \cite{cbs}. Moreover, it is worth mentioning that in Theorem~\ref{thm:mckeanexistence}, we will verify the existence of a strong solution to the McKean SDE \eqref{eq:CBS_McKean}.}

We are now ready to formulate our main result, which is analogous to \cite[Theorem 3.3]{huangcbo}. In particular, we are able to extend the mean-field limit analysis for the system \eqref{eq:original_CBO} presented in \cite{huangcbo} to the stochastic particle system driven by \eqref{eq:CBS_SDE}. 

\begin{theorem} \label{thm:main} 
\sw{Let Assumptions~\ref{asp:costfuns}, \ref{asp:moments} and \ref{assumpt:evs} hold} and for $T>0$ \sw{and $J\ge2$} let $(\theta^{j,J}_t)_{t\in[0,T]}$ be the unique solution of \eqref{eq:CBS_SDE} \sw{with empirical measures $\rho_t^J=\frac1J \sumi{j=1}{J}\delta_{\theta_t^{j,J}}$, $t\in[0,T]$,} and $\rho_0^{\otimes J}$-distributed initial data $(\theta^{j,J}_0)_{j=1,\ldots,J}$. 
\sw{Then $\{(\mathbb{P}^{\rho^J_t})_{t\in[0,T]}\}_{J\in\mathbb N}$ converges pointwise to $(\delta_{\rho_t})_{t\in[0,T]}$, where $(\rho_t)_{t\in[0,T]}\in C([0,T];\mathcal P(\R^D))$ is the (deterministic) unique weak solution of the FPE \eqref{eq:fpe}.}
\end{theorem}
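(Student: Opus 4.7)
The plan is to follow the compactness strategy of Huang--Qiu \cite{huangcbo}, lifted to handle the covariance-driven diffusion of \eqref{eq:CBS_SDE}. I would split the argument into four steps that mirror the outline in the paragraph before the theorem.

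Step 1 (tightness on path space). First I would show that the family $\{\mathbb{P}^{\rho^J}\}_{J\ge 2}\subset\mathcal P(\mathcal P(C([0,T];\R^d)))$ is tight. By a standard lifting it suffices to show that $\{\mathbb P^{\theta^{1,J}}\}_J$ is tight in $\mathcal P(C([0,T];\R^d))$, since the particles are exchangeable. For this I would use the Aldous--Kolmogorov criterion: the uniform $p$-moment estimates ($p=2,4,6$) established in the moment-estimate lemmas combined with Lemmas~\ref{lem:estimate_weightedmean1} and~\ref{lem:estimate_weightedcov} bound $\E{|\theta^{1,J}_t-\theta^{1,J}_s|^{q}}$ by $C|t-s|^{q/2}$ (drift contributes linearly in $|t-s|$, diffusion by BDG and the covariance bound), uniformly in $J$, so the family is tight.

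Step 2 (subsequential limit and Skorokhod representation). By Prokhorov there is a subsequence $J_k$ and $\Pi\in\mathcal P(\mathcal P(C([0,T];\R^d)))$ with $\mathbb P^{\rho^{J_k}}\Rightarrow\Pi$. Applying Skorokhod's representation on a common probability space $(\tilde\Omega,\tilde{\mathcal F},\tilde{\mathbb P})$, I obtain random measures $\tilde\rho^{J_k}\to\tilde\rho$ $\tilde{\mathbb P}$-a.s.\ in $\mathcal P(C([0,T];\R^d))$.

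Step 3 (the limit is a weak solution of \eqref{eq:fpe}). For each $\varphi\in C_c^2(\R^d)$ and $t\in[0,T]$, It\^o's formula applied to the particle system \eqref{eq:CBS_SDE} gives
\begin{equation*}
\F{\rho^J}=\frac1J\sumi{j=1}{J}M^{j,J}_t(\varphi),
\end{equation*}
where $M^{j,J}_t(\varphi)$ are martingales whose quadratic variations are $O(1/J)$ by the moment estimates and Lemma~\ref{lem:estimate_weightedcov}. Hence $\E{|\F{\rho^J}|^2}\to 0$. I then want to pass to the limit in $\F{\tilde\rho^{J_k}}$. The difficult terms are the nonlinear nonlocal functionals $\mathcal M_\beta(\cdot)$ and $\C{\cdot}$ (and in particular $\sqrt{\C{\cdot}}$ appearing through the diffusion). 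Continuity of $\eta\mapsto\mathcal M_\beta(\eta)$ and $\eta\mapsto\C{\eta}$ in a suitable weak topology can be established by the uniform moment estimates together with the Lipschitz bound on $f$ from Assumption~\ref{asp:costfuns}(1) (which gives quadratic growth of $\omega^f_\beta$-weighted integrands). Uniform integrability lets me upgrade weak convergence of the time marginals $\tilde\rho^{J_k}_s$ (obtained from the pushforward under $e_s$) to convergence of the weighted-mean and weighted-covariance integrals. Taken together this yields $\F{\tilde\rho}=0$ $\tilde{\mathbb P}$-a.s.\ for all rational $t$ and a dense countable set of test functions, and by continuity for all $t\in[0,T]$ and $\varphi\in C_c^2(\R^d)$. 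The continuity-in-time condition \eqref{eq:time_continuity_condition} follows because $\tilde\rho\in\mathcal P(C([0,T];\R^d))$ and the evaluation map $e_t$ is continuous. Thus $\tilde\rho$-almost every measure flow is a weak solution of \eqref{eq:fpe}.

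Step 4 (uniqueness and deterministic limit). This is where I expect the main obstacle. Uniqueness of the weak solution of the FPE is obtained through uniqueness of the McKean--Vlasov SDE \eqref{eq:CBS_McKean}, which the paper proves separately via Leray--Schauder (Theorem~\ref{thm:mckeanexistence}); here Assumption~\ref{assumpt:evs} is essential, since it provides a positive lower bound $\bar\sigma\,\mathrm{Id}$ on $\C{\rho_t}$ that makes the matrix square root Lipschitz along the candidate fixed-point trajectory and permits a Gr\"onwall argument. Given uniqueness, every subsequential limit concentrates on the same deterministic measure flow $\rho\in C([0,T];\mathcal P_2(\R^d))$, so the full sequence $\mathbb P^{\rho^J}\Rightarrow\delta_\rho$ in $\mathcal P(\mathcal P(C([0,T];\R^d)))$. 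Finally, applying \cite[Lemma 2.3(2)]{huangcbo} to transfer convergence from the path space to the time marginals, $\mathbb P^{\rho^J_t}\Rightarrow\delta_{\rho_t}$ in $\mathcal P(\mathcal P(\R^d))$ for every $t\in[0,T]$, which is exactly the asserted pointwise convergence.
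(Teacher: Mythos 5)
Your proposal reproduces the paper's compactness strategy essentially verbatim: tightness via Aldous' criterion and the uniform moment bounds (Lemma~\ref{lem:momest} together with Lemmas~\ref{lem:estimate_weightedmean1}--\ref{lem:estimate_weightedcov}), Prokhorov plus Skorokhod for $\tilde{\mathbb P}$-a.s.\ subsequential convergence, It\^o's formula to show $\mathbb F_{\varphi,t}(\bar\rho^J)\to 0$ in $L^2$ and $\mathbb F_{\varphi,t}(\bar\rho^J)\to\mathbb F_{\varphi,t}(\bar\rho)$ in $L^1$ so the limit is a weak FPE solution, and uniqueness of that solution to upgrade subsequential convergence of the time marginals to full convergence. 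One minor mislabel: Schaefer's (Leray--Schauder-type) fixed point theorem powers \emph{existence} of the McKean solution (Theorem~\ref{thm:mckeanexistence}), while \emph{uniqueness} (Theorem~\ref{thm:mckeanuniqueness}) and the FPE uniqueness in Corollary~\ref{cor:uniqueness} rest on a Gr\"onwall argument routed through the linearized FPE of Proposition~\ref{prop:linearizedfpeunique}, with Assumption~\ref{assumpt:evs} entering exactly where you say it does.
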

    The proof of Theorem~\ref{thm:main} follows a compactness argument for verifying existence of the weak solution to the FPE and 
    can be sketched as follows:
\begin{enumerate}
    \item We show in Lemma~\ref{lem:tight} that the sequence of measures $\mathfrak F = \{\mathbb P^{\rho^J}\}_{J\in\mathbb N}$ is tight in $\mathcal{P}(\mathcal{P}({C}([0,T];\R^{D})))$.
    \item Prohorov's Theorem implies that $\mathfrak F$ is weakly relatively sequentially compact.
    \item If a subsequence $\{\rho^{J_k}\}_{k\in\mathbb N}$ is convergent in distribution\footnote{Equivalent to weak convergence of the $\mathbb P^{\rho^{J_k}}$.}, \mk{the limit of $\{(\rho^{J_k}_t)_{t\in[0,T]}\}_{k\in\mathbb N}$} solves the FPE weakly, which we show in Theorem~\ref{thm:weakly_conv_fpe}. In this step we employ Skorokhod Representations of the $\mathbb P^{\rho^{J_k}_t}$ and work with $\Tilde{\mathbb{P}}_t$-a.s. convergence on some appropriate space.
    \item \mk{Corollary~\ref{cor:uniqueness} implies that the limits of all $\{(\mathbb P^{\rho^{J_k}_t})_{t\in[0,T]}\}_{k\in\mathbb N}$ for weakly convergent subsequences $\{\mathbb P^{\rho^{J_k}}\}_{k\in\mathbb N}$ of $\mathfrak F$ are equal.}
    \item \mk{Because every subsequence has a weakly convergent subsequence, and all convergent subsequences attain the same limit, so does the entire sequence.}
\end{enumerate}
This line of arguments is in principle analogous to
  \cite{huangcbo}.
  Nevertheless, our analysis significantly deviates from this work as we
  consider a complete covariance matrix rather than a diagonal diffusion term. This results in more intricate computations and estimations.
We proceed with the first step of the above procedure, which is proving tightness of the sequence of laws, for which the following moment estimates and propagation of chaos \cite[Chapter~8]{del2004feynman} will be instrumental.

\subsection{Moment Estimates}\label{sec:supp:momentestimates}
We now provide some moment estimates, which are fundamental to multiple steps of the compactness argument we present for Theorem \ref{thm:main}. These estimates include the second, fourth and sixth moments along the lines of \cite[Lemma 3.4]{cboanalysis}.

\begin{lemma} \label{lem:momest} Let $\theta_0 = (\theta_0^{j})_{j=1,\dots,J}$ be $\mathcal F_0^J$-measurable such that $\rho_0$ satisfies Assumption \ref{asp:moments}. Then, for $p=1,2,3$ there exists a constant $K_p(T)>0$, depending on $T$ and $p$ but independent of $J$, such that 
    \begin{equation}
        \supi{t\in[0,T]} \Ec{\int_{\R^d}\mtp{x}\mathrm{d}\Rh(x)} \leq K_p(T).
    \end{equation}
\end{lemma}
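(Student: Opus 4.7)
The plan is to apply Itô's formula to $|\theta_t^{j,J}|^{2p}$ particle-by-particle, average over $j$, take expectations, and reduce the claim to a linear Grönwall estimate for
\[
\phi_p(t) := \mathbb{E}\!\left[\int_{\R^d} |x|^{2p}\, d\rho_t^J(x)\right] \;=\; \frac{1}{J}\sum_{j=1}^{J}\mathbb{E}\bigl[|\theta_t^{j,J}|^{2p}\bigr].
\]
The initial value $\phi_p(0)$ equals $\int |x|^{2p} d\rho_0$, which is finite for $p=1,2,3$ by Assumption~\ref{asp:moments} (since $|x|^{2p}\le 1+|x|^6$), and the bound one extracts will be manifestly independent of $J$.

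Using $\nabla|x|^{2p} = 2p|x|^{2p-2}x$ and $\nabla^2|x|^{2p} = 2p|x|^{2p-2}\mathrm{Id} + 2p(2p-2)|x|^{2p-4}xx^\top$, together with a localization at $\tau_R = \inf\{t:\max_j|\theta_t^{j,J}|\ge R\}$ to ensure the martingale part vanishes in expectation, Itô's formula yields
\begin{align*}
\tfrac{d}{dt}\phi_p(t) &= -2p\,\phi_p(t) + 2p\,\mathbb{E}\!\left[\int |x|^{2p-2}\langle x,\M{\rho_t^J}\rangle\, d\rho_t^J\right]\\
&\quad + 2p\lambda^{-1}\mathbb{E}\!\left[\int |x|^{2p-2}\mathrm{Tr}(\C{\rho_t^J})\, d\rho_t^J\right] + 4p(p-1)\lambda^{-1}\mathbb{E}\!\left[\int |x|^{2p-4}\, x^\top \C{\rho_t^J}\, x\, d\rho_t^J\right].
\end{align*}
I would then handle the three nontrivial terms as follows: the drift cross term via the pointwise Young inequality $|x|^{2p-1}|\M{\rho_t^J}|\le\tfrac{2p-1}{2p}|x|^{2p}+\tfrac{1}{2p}|\M{\rho_t^J}|^{2p}$ combined with Lemma~\ref{lem:estimate_weightedmean1}, which yields $|\M{\rho_t^J}|^{2p}\le (b_1+b_2\int|x|^2 d\rho_t^J)^p$; and the two diffusion contributions via $\mathrm{Tr}(\C{\rho_t^J})\le\sqrt{d}\|\C{\rho_t^J}\|_F$ and $x^\top \C{\rho_t^J}x\le |x|^2\|\C{\rho_t^J}\|_F$, followed by Lemma~\ref{lem:estimate_weightedcov} to replace $\|\C{\rho_t^J}\|_F$ by $b_1+b_2\int|x|^2 d\rho_t^J$. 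The key algebraic observation that closes everything is that, by Hölder/Jensen applied to the probability measure $\rho_t^J$,
\[
\left(\int |x|^2 d\rho_t^J\right)\left(\int |x|^{2p-2}d\rho_t^J\right)\;\le\; \int |x|^{2p}d\rho_t^J, \qquad \left(\int |x|^2 d\rho_t^J\right)^p \;\le\; \int |x|^{2p}d\rho_t^J,
\]
so every remaining term collapses into a multiple of $\phi_p$ plus a pure constant. Collecting yields a differential inequality $\phi_p'(t)\le C_{p,1}+C_{p,2}\phi_p(t)$ with constants depending only on $f,\beta,\lambda,d,p$, and Grönwall on $[0,T]$ delivers $\phi_p(t)\le K_p(T)$.

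The main technical point I expect to have to handle carefully is the localization: since $\|\sqrt{2\lambda^{-1}\C{\rho_t^J}}\|_F$ is controlled $\omega$-wise on $\{\max_j|\theta_t^{j,J}|\le R\}$ by Lemma~\ref{lem:estimate_weightedcov}, the stopped martingale piece has bounded quadratic variation and vanishes in expectation, after which passing to $R\to\infty$ by monotone convergence on the left-hand side and Fatou on the right-hand side recovers the un-localized inequality. A minor book-keeping item is the case $p=1$, where the last diffusion term drops out thanks to the factor $(2p-2)$ and the argument simplifies. With the two a priori lemmas in hand, these estimates are the only substantive ingredients, and no induction on $p$ is required.
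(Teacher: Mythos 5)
Your proposal is correct and follows the same high-level strategy as the paper: apply It\^o's formula to $|\theta_t^{j,J}|^{2p}$, average over $j$, take expectations, control the drift and diffusion terms via Lemma~\ref{lem:estimate_weightedmean1} and Lemma~\ref{lem:estimate_weightedcov}, use the product-of-moments inequality (Lemma~\ref{lem:auxiliary_moments}), and close by Gr\"onwall. The differences are worth noting. First, the paper uses a coarser bound of the form $\langle a,b\rangle\lesssim|a|^2+|b|^2$ and ends up with a differential inequality in terms of $\phi_p$, $\phi_{p-1}$, and $\phi_{\max(0,p-2)}$, which it then resolves by a cascade: bound $p=1$, feed the result into the $p=2$ inequality, and so on. You instead use a sharper Young inequality with exponents $\tfrac{2p}{2p-1}$ and $2p$ that produces $|\mathcal M_\beta(\rho_t^J)|^{2p}$ directly, and you absorb all lower-order moments into $\phi_p$ via $\bigl(\int|x|^2\,\mathrm d\rho_t^J\bigr)^p\le\int|x|^{2p}\,\mathrm d\rho_t^J$ and $\int|x|^{2k}\,\mathrm d\rho_t^J\le 1+\int|x|^{2p}\,\mathrm d\rho_t^J$ for $k\le p$; this makes the bound a closed affine inequality in $\phi_p$, obviating the induction. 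The latter absorption step should be written out explicitly for the $b_1$-contributions of both diffusion terms, but it is entirely standard. Second, you handle the martingale term with an explicit localization and stopping-time passage, whereas the paper simply asserts that the stochastic integrals are (true) martingales; your version is the more rigorous one on this point. Both routes deliver the same constants-up-to-relabelling and are valid.
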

    
\subsection{Tightness of the empirical measures}\label{sec:tight}
The following theorem provides the tightness of the empirical measures using the criteria of Aldous' \cite[Thm 16.10]{billingsley}, which we recall in the supplementary material in Theorem~\ref{thm:aldous}. We adapt the strategy of the proof of \cite[Theorem~3.3]{mfpso}. Tightness ensures the particles do not tend to infinity with notable probability and is fundamental to the compactness argument.

\begin{lemma} \label{lem:tight} 
    Let $f$ satisfy Assumption \ref{asp:costfuns} and let $\rho_0$ satisfy Assumption \ref{asp:moments}. Furthermore, let $\{(\theta^{j,J})_{t\in[0,T]}\}_{j=1}^J$ be the unique solution to \eqref{eq:CBS_SDE} with $\rho_0^{\otimes J}$-distributed $\{\theta^{j,J}_0\}_{j=1}^J$. Then $\mathfrak F =\{\mathbb P^{\rho^J}\}_{J\in\mathbb N}$ is tight in $\mathcal{P}(\mathcal{P}(C([0,T];\R^{D})))$.
\end{lemma}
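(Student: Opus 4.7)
The plan is to follow the strategy of \cite{mfpso} by first reducing tightness of the empirical measures to tightness of the law of a single particle via exchangeability, and then invoking Aldous' criterion as stated in Theorem~\ref{thm:aldous}. Since the initial data $\{\theta^{j,J}_0\}_{j=1}^J$ are i.i.d.\ samples from $\rho_0$ and the SDE \eqref{eq:CBS_SDE} is permutation invariant in the particle indices (each drift and diffusion coefficient depends on particle $j$ only through $\theta^{j,J}_s$ and the empirical measure $\rho^J_s$), the particle system is exchangeable. A classical result of Sznitman then reduces tightness of $\mathfrak F=\{\mathbb{P}^{\rho^J}\}_{J\in\mathbb N}$ in $\PM(\PM(C([0,T];\R^D)))$ to tightness of $\{\mathbb{P}^{\theta^{1,J}}\}_{J\in\mathbb N}$ in $\PM(C([0,T];\R^D))$, so it suffices to verify the latter.

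\textbf{Compact containment.} The first condition of Aldous asks for tightness of the time marginals uniformly in $J$ and $t\in[0,T]$. By exchangeability,
\[ \Ec{|\theta^{1,J}_t|^2} = \Ec{\int_{\R^d}|x|^2\,\mathrm d\rho^J_t(x)} \le K_1(T), \]
uniformly in $J$ and $t$, where $K_1(T)$ is from Lemma~\ref{lem:momest}. Markov's inequality then gives $\mathbb P(|\theta^{1,J}_t|>R)\le K_1(T)/R^2$, and the desired compact containment is immediate.

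\textbf{Increment condition.} For stopping times $\tau_J\le T$ with $h_J\to 0$, I decompose via \eqref{eq:CBS_SDE}:
\[ \theta^{1,J}_{\tau_J+h_J}-\theta^{1,J}_{\tau_J} = -\int_{\tau_J}^{\tau_J+h_J}\bigl(\theta^{1,J}_s-\M{\rho^J_s}\bigr)\,\mathrm ds + \int_{\tau_J}^{\tau_J+h_J}\sqrt{2\Li \C{\rho^J_s}}\,\mathrm dW^1_s, \]
and control each piece in $L^2$. For the drift, Cauchy--Schwarz yields
\[ \Ec{\left|\int_{\tau_J}^{\tau_J+h_J}\bigl(\theta^{1,J}_s-\M{\rho^J_s}\bigr)\,\mathrm ds\right|^2} \le h_J\,\Ec{\int_{\tau_J}^{\tau_J+h_J}|\theta^{1,J}_s-\M{\rho^J_s}|^2\,\mathrm ds}, \]
and by Lemma~\ref{lem:estimate_weightedmean1} together with exchangeability and Lemma~\ref{lem:momest}, the integrand is uniformly bounded in $s$ and $J$, giving an $O(h_J^2)$ bound. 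For the diffusion, It\^o's isometry gives
\[ \Ec{\left|\int_{\tau_J}^{\tau_J+h_J}\sqrt{2\Li \C{\rho^J_s}}\,\mathrm dW^1_s\right|^2} = 2\Li\,\Ec{\int_{\tau_J}^{\tau_J+h_J}\|\sqrt{\C{\rho^J_s}}\|_F^2\,\mathrm ds}, \]
and by Lemma~\ref{lem:estimate_weightedcov} the integrand is bounded by $b_1+b_2\int|x|^2\mathrm d\rho^J_s$, whose expectation is uniformly controlled by Lemma~\ref{lem:momest}, yielding an $O(h_J)$ bound. Markov's inequality then delivers the required convergence of the increments to $0$ in probability, uniformly in $J$ and in the choice of stopping time.

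\textbf{Main obstacle.} The principal difficulty compared to \cite{huangcbo} is that our diffusion is matrix-valued and given by the square root of the nonlinear, nonlocal functional $\C{\rho^J_s}$ of the empirical measure, rather than a diagonal coefficient pointwise controlled by $|\theta^{1,J}_s-\M{\rho^J_s}|$. This is precisely where Lemma~\ref{lem:estimate_weightedcov} enters the argument: it furnishes a uniform bound on $\|\sqrt{\C{\mu}}\|_F^2$ in terms of the second moment of $\mu$, which is exactly what is required to close the It\^o isometry estimate with the uniform moment bound of Lemma~\ref{lem:momest}.
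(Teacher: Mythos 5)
Your proof follows the same route as the paper's: reduce to tightness of a single particle's law via exchangeability (Sznitman, Theorem~\ref{thm:chaos}), then verify Aldous' two conditions, using Markov's inequality with the moment bounds of Lemma~\ref{lem:momest} for compact containment, and a drift/diffusion split with Cauchy--Schwarz, It\^o's isometry, Lemma~\ref{lem:estimate_weightedmean1} and Lemma~\ref{lem:estimate_weightedcov} for the increment estimate. The argument is correct and matches the paper's essentially line for line.
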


\subsection{Convergence of the empirical measures}

Due to Prokhorov’s theorem \cite[Theorem 6]{billingsley} it \sw{is a direct consequence of Lemma~\ref{lem:tight} that there exists a subsequence 
$\{\mathbb P^{\rho^{J_k}}\}_{k\in\mathbb N}$ which converges weakly (i.e.~$\{\rho^{J_k}\}_{k\in\mathbb N}$ converges in distribution) to some random measure $\rho:\Omega \to \mathcal{P}(C([0,T];\R^{D}))$. By Skorokhod’s lemma \cite[Thm 6.7]{billingsley} we then find a common probability space $(\tilde\Omega,\tilde{\mathcal F},\tilde{\mathbb P})$ on which $\{\rho^{J_k}\}_{k\in\mathbb N}$ converges $\tilde{\mathbb P}$-almost surely as random variable to a random variable $\rho:\tilde\Omega \to \mathcal{P}(C([0,T];\R^{D}))$ with values in $\mathcal{P}(C([0,T];\R^{D}))$.} 
It is important to note that at this point in our overall argument \sw{we 
project} from $\mathcal P(C([0,T],\R^d))$ to $C([0,T],\mathcal P(\R^d))$ by use of the map
\begin{align}
    \Pi : \mathcal P(C([0,T], \R^d)) &\to C([0,T],\mathcal P(\R^d)) \\
    \rho &\mapsto (\rho_t)_{t\in[0,T]} 
\end{align}
defined in \cite[Sec. 3.4.3]{poc1} and work with the measure flows 
\begin{equation}
    \overline\rho^J:=\Pi(\rho^J)=(\rho^J_t)_{t\in[0,T]}\,,
\end{equation}
\mk{where the time marginals $\rho_t^J$ are as in Definition~\ref{def:pathvspoint}.}
\sw{This 
enables us to verify} the 
two pointwise convergences \sw{described below}. The weak convergence of the time marginals is then obtained via \cite[Lemma 2.3 (2)]{huangcbo}.

In continuing with our compactness argument for the unique solvability of the FPE, we now verify condition \eqref{eq:functional_condition} for \sw{the limit of} \mk{a} convergent subsequence $\{\rho^{J_k}\}_{k\in\mathbb N}$ (which we simply denote by the full sequence $\{\rho^{J}\}_{J\in\mathbb N}$) 
using $\L{1}$- and $\L{2}$-convergence \sw{with respect to the probability measure $\tilde{\mathbb P}$ on the common probability space $(\tilde\Omega,\tilde{\mathcal F},\tilde{\mathbb P})$}, i.e. $\forall t\in[0,T]$ and arbitrary $\varphi\in C_c^2(\R^d)$:

\begin{enumerate}
    \item $\F{\overline \rho^J} \Lconv{2} 0$ as $J\to\infty$,
    \item $\F{\overline \rho^J} \Lconv{1} \F{\overline \rho}$ as $J\to\infty$.
\end{enumerate}
Since the limits are $\tilde{\mathbb P}$-almost surely unique, we obtain that the limiting random measure $\rho_t$ satisfies $\F{\rho}=0$, $\tilde{\mathbb P}$-almost surely. 
These two points are shown in the following two Lemmas.

The first point is shown using 
\jz{$\L{2}$}-convergence.
By the reasoning in \cite[Prop. 3.2]{huangcbo}, we have the following result:

\begin{lemma} \label{lem:funcconvzero}
    Let $f$ satisfy {Assumption}~\ref{asp:costfuns} and let $\rho_0$ satisfy Assumption \ref{asp:moments}. For $J\in\mathbb{N}$, assume that $\{(\tjj_t)_{t\in[0,T]}\}_{j=1,\ldots,J}$ is the unique strong solution to the particle system \eqref{eq:CBS_SDE} with $\rho_0^{\otimes J}$-distributed initial data $\{\tjj_0\}_{j=1,\ldots,J}$. For the projections $\{\overline\rho^J\}_{J\in\mathbb{N}}$ of the empirical measures it then holds for all $t\in[0,T]$ and $\varphi\in C_c^2(\R^d)$ that
    \begin{equation}
        \F{\overline\rho^J} \Lconv{2} 0,
    \end{equation}
    \mk{where $\overline\rho^J\in C([0,T],\mathcal P(\R^d))$.}
\end{lemma}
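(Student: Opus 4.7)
The plan is to apply It\^o's formula to $\varphi(\tjj_t)$ for each particle, average over $j=1,\dots,J$, and observe that the resulting decomposition matches exactly the deterministic terms in the definition \eqref{eq:Fvarphi} of $\F{\overline\rho^J}$. This reduces $\F{\overline\rho^J}$ to a pure martingale remainder $M_t^J$, whose second moment is then shown to be $O(1/J)$ via the It\^o isometry together with the covariance bound of Lemma~\ref{lem:estimate_weightedcov} and the second-moment estimate of Lemma~\ref{lem:momest}.

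First, since $\sqrt{2\Li\,\C{\rho_s^J}}$ is symmetric, the matrix-valued quadratic variation of $\tjj$ equals $2\Li\,\C{\rho_s^J}\,\mathrm{d}s$, so It\^o's formula applied to \eqref{eq:CBS_SDE} yields
\[
\varphi(\tjj_t) - \varphi(\tjj_0) = \int_0^t \Bigl[-\nabla\varphi(\tjj_s)^\top\bigl(\tjj_s - \M{\rho_s^J}\bigr) + \Li\,\Tr{\nabla^2\varphi(\tjj_s)\,\C{\rho_s^J}}\Bigr]\mathrm{d}s + M_t^{j,J},
\]
with $M_t^{j,J} := \int_0^t \nabla\varphi(\tjj_s)^\top \sqrt{2\Li\,\C{\rho_s^J}}\,\mathrm{d}\Wj_s$. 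Averaging in $j$ and using $\rho_s^J = \frac{1}{J}\sum_{j=1}^J \delta_{\tjj_s}$, the drift and second-order terms reassemble into precisely the non-martingale contributions of $\F{\overline\rho^J}$; a direct rearrangement therefore produces
\[
\F{\overline\rho^J} \;=\; \frac{1}{J}\sumi{j=1}{J} M_t^{j,J} \;=:\; M_t^J.
\]

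Next I would bound $\Ec{|M_t^J|^2}$. Since the $\{\Wj\}_{j=1}^J$ are mutually independent, the cross quadratic covariations $\langle M^{j,J},M^{k,J}\rangle_t$ vanish for $j\neq k$, so $\Ec{M_t^{j,J} M_t^{k,J}} = 0$, and the It\^o isometry gives
\[
\Ec{|M_t^J|^2} \;=\; \frac{1}{J^2}\sumi{j=1}{J}\Ec{\int_0^t \bigl\|\sqrt{2\Li\,\C{\rho_s^J}}\,\nabla\varphi(\tjj_s)\bigr\|^2 \mathrm{d}s}.
\]
Using $\|Av\|\le\|A\|_F\|v\|$ and Lemma~\ref{lem:estimate_weightedcov}, the integrand is dominated by $2\Li\,\|\nabla\varphi\|_\infty^2\bigl(b_1 + b_2 \int|x|^2 \mathrm{d}\rho_s^J(x)\bigr)$; taking expectations and inserting the uniform bound from Lemma~\ref{lem:momest} with $p=1$ yields
\[
\Ec{|M_t^J|^2} \;\leq\; \frac{2\Li\,\|\nabla\varphi\|_\infty^2\,T\,(b_1 + b_2 K_1(T))}{J} \;\xrightarrow{J\to\infty}\; 0,
\]
which is the claimed $\L{2}$-convergence.

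The main point requiring some care, rather than a genuine obstacle, is the treatment of the matrix-valued diffusion $\sqrt{\C{\rho_s^J}}$: unlike the diagonal diffusion in \cite{huangcbo}, the $d$ coordinates cannot be decoupled into scalar It\^o integrals, and one must instead control the full Frobenius norm of the matrix square root, which is exactly what Lemma~\ref{lem:estimate_weightedcov} supplies. A minor bookkeeping step is verifying that $M_t^J$ is a genuine (not merely local) martingale, but this follows from the same integrability estimate derived above.
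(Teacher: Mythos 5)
Your proof is correct and follows essentially the same route as the paper's: apply It\^o's formula to $\varphi(\theta_t^{j,J})$, recognize that the drift and second-order terms reassemble into the deterministic part of $\mathbb F_{\varphi,t}(\overline\rho^J)$ so that only a martingale average remains, and then use independence of the $W^j$, the It\^o isometry, Lemma~\ref{lem:estimate_weightedcov} and the moment bound from Lemma~\ref{lem:momest} to get an $O(1/J)$ bound on the second moment.
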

One obtains this result by applying It\^o's Lemma to the functional and using Lemma \ref{lem:momest}. The second part of the limiting procedure is shown in the following Lemma, which is analogous to \cite[Theorem 3.3]{huangcbo}:
\begin{lemma} \label{lem:funcconvlimit}  \label{lem:l1convFtoLimit}
    Let $f$ satisfy Assumption \ref{asp:costfuns} and let $\rho_0$ satisfy Assumption \ref{asp:moments}. For $J\in\mathbb{N}$, assume that $\{(\tjj_t)_{t\in[0,T]}\}_{j=1,\ldots,J}$ is the unique strong solution to the particle system \eqref{eq:CBS_SDE} with $\rho_0^{\otimes J}$-distributed initial data $\{\tjj_0\}_{j=1,\ldots,J}$. 
    \mk{Let $\{\rho^J\}_{J\in\mathbb{N}}\subset \mathcal P(C([0,T],\R^d))$ be a weakly convergent subsequence of the empirical measures, denoted by the full sequence, and let $\rho$ be their limit. It then holds for the projections} for all $t\in[0,T]$ and $\varphi\in C_c^2(\R^d)$ that
    \begin{align}
        \F{\overline\rho^J} \Lconv{1} \F{\overline\rho}.
    \end{align}
\end{lemma}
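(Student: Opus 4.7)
The plan is to establish termwise $L^1(\tilde{\mathbb P})$-convergence of the four summands of $\F{\cdot}$ in \eqref{eq:Fvarphi}, by combining the Skorokhod $\tilde{\mathbb P}$-a.s.\ convergence with a uniform integrability bound coming from Lemma~\ref{lem:momest}. After constructing the common probability space $(\tilde\Omega,\tilde{\mathcal F},\tilde{\mathbb P})$, we have $\rho^J\to\rho$ $\tilde{\mathbb P}$-a.s.\ in $\mathcal P(C([0,T],\R^d))$. Composing with the projection $\Pi$ and invoking \cite[Lemma~2.3(2)]{huangcbo}, we obtain on a full-measure event $\tilde\Omega_0\subset\tilde\Omega$ the pointwise weak convergence $\overline\rho^J_s\to\overline\rho_s$ in $\mathcal P(\R^d)$ for every $s\in[0,T]$.

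The key technical point is the continuity of the nonlinear functionals $\mathcal M_\beta$ and $\mathcal C_\beta$ along weakly convergent sequences. Assumption~\ref{asp:costfuns}(3) gives the quadratic coercivity $f(x)-f_*\ge c_l|x|^2$ for $|x|>M$, so the exponential weight $e^{-\beta f(x)}$ dominates any polynomial; consequently $x\mapsto |x|^k e^{-\beta f(x)}\in C_b(\R^d)$ for $k=0,1,2$. Weak convergence of $\overline\rho^J_s$ to $\overline\rho_s$ therefore transfers to convergence of both the numerators and denominators appearing in the definitions of $\M{\cdot}$ and $\C{\cdot}$, and since the denominators stay uniformly positive along the convergent sequence this yields $\M{\overline\rho^J_s}\to\M{\overline\rho_s}$ and $\C{\overline\rho^J_s}\to\C{\overline\rho_s}$ on $\tilde\Omega_0$ for every $s\in[0,T]$.

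Next I decompose $\F{\overline\rho^J}-\F{\overline\rho}$ into four pieces, one per summand of \eqref{eq:Fvarphi}. The boundary terms in $\rho^J_t$ and $\rho^J_0$ converge by weak convergence combined with $\varphi\in C_b(\R^d)$. For the drift term, split the inner integrand as $\int x^\top\nabla\varphi(x)\,\mathrm d\overline\rho^J_s(x) - \M{\overline\rho^J_s}^\top\int\nabla\varphi\,\mathrm d\overline\rho^J_s$: the first piece converges because $x\mapsto x^\top\nabla\varphi(x)\in C_b(\R^d)$ (compact support of $\varphi$), and the second by continuity of $\M{\cdot}$ established above. The diffusion term is handled identically using continuity of $\C{\cdot}$ and boundedness of $\nabla^2\varphi$. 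Convergence of the $s$-integrals over $[0,t]$ follows from dominated convergence, using the $\tilde\omega$-dependent but $s$-integrable dominator $C(1+|\M{\overline\rho^J_s}|+\|\C{\overline\rho^J_s}\|_F)$ controlled via Lemmas~\ref{lem:estimate_weightedmean1}--\ref{lem:estimate_weightedcov}.

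Finally, to promote this $\tilde{\mathbb P}$-a.s.\ convergence to $L^1(\tilde{\mathbb P})$, I would apply Vitali's theorem. The estimates
\[
|\M{\overline\rho^J_s}|^2\le b_1+b_2\int|x|^2\,\mathrm d\overline\rho^J_s,\qquad \|\C{\overline\rho^J_s}\|_F\le b_1+b_2\int|x|^2\,\mathrm d\overline\rho^J_s,
\]
combined with $\sup_J\sup_{s\in[0,T]}\tilde{\mathbb E}[\int|x|^2\,\mathrm d\overline\rho^J_s]\le K_1(T)$ from Lemma~\ref{lem:momest} (and analogous higher-moment bounds using $K_2(T)$, $K_3(T)$), yield a uniform $L^2(\tilde{\mathbb P})$-bound on each summand of $\F{\overline\rho^J}$, hence the required uniform integrability. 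The main obstacle, relative to the CBO analysis in \cite{huangcbo}, is the full-covariance diffusion: whereas the diagonal case reduces to the quadratic functional $|x-\M{\rho}|^2$, here $\C{\rho}$ is a genuinely bilinear functional of the measure, so both its continuity in $\eta$ and the requisite moment control depend essentially on the coercivity of $f$ together with the higher-moment estimates from Lemma~\ref{lem:momest}.
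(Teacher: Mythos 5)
Your proposal is correct and follows essentially the same strategy as the paper's proof: Skorokhod representation to pass to $\tilde{\mathbb P}$-a.s.\ weak convergence of the time marginals, continuity of $\mathcal M_\beta$ and $\mathcal C_\beta$ along weakly convergent sequences justified by the boundedness of the polynomially-weighted Laplace density under Assumption~\ref{asp:costfuns}(3), a termwise treatment of $\mathbb F_{\varphi,t}$, and the uniform moment bounds of Lemmas~\ref{lem:momest}, \ref{lem:estimate_weightedmean1}, \ref{lem:estimate_weightedcov} to upgrade a.s.\ convergence to $L^1(\tilde{\mathbb P})$. The paper differs only cosmetically, using an explicit truncation (H\"older plus Markov at a threshold $A$) rather than invoking Vitali, working the diffusion term in detail with an $I_1^J/I_2^J$ splitting while citing \cite[Thm.~3.3]{huangcbo} for the remaining two summands.
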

In order to obtain this result, one inspects the summands of the functional one by one for convergence.
The previous two lemmas permit us to obtain the following result:
\begin{theorem} \label{thm:weakly_conv_fpe}
    Let $f$ satisfy {Assumption}~\ref{asp:costfuns} and let $\rho_0$ satisfy Assumption \ref{asp:moments}. For $J\in\mathbb{N}$, assume that $\{(\tjj_t)_{t\in[0,T]}\}_{j=1,\ldots,J}$ is the unique strong solution to the particle system \eqref{eq:CBS_SDE} with $\rho_0^{\otimes J}$-distributed initial data $\{\tjj_0\}_{j=1,\ldots,J}$. Let $\{\rho^J\}_{J\in\mathbb{N}}$ be a weakly convergent subsequence of the  empirical measures, denoted by the full sequence, with  limit \mk{$\rho\in \mathcal P(C([0,T],\R^d))$}. \mk{Then the projection of the limit $\overline\rho\in C([0,T],\mathcal P(\R^d))$} is a weak solution of the FPE \eqref{eq:fpe}. 
\end{theorem}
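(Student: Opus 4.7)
The plan is to verify both defining properties of a weak solution for the projected limit $\overline\rho=\Pi(\rho)$, namely the time-continuity condition \eqref{eq:time_continuity_condition} and the functional identity \eqref{eq:functional_condition}, by piecing together the two preceding $L^p$-convergence lemmas and a direct argument for the regularity of the measure flow.

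First I would combine Lemmas~\ref{lem:funcconvzero} and~\ref{lem:funcconvlimit}. For any fixed $t\in[0,T]$ and $\varphi\in C_c^2(\R^d)$, $L^p$-convergence implies convergence in probability, so Lemma~\ref{lem:funcconvzero} yields $\F{\overline\rho^J}\to 0$ and Lemma~\ref{lem:funcconvlimit} yields $\F{\overline\rho^J}\to\F{\overline\rho}$, both in $\tilde{\mathbb P}$-probability. Uniqueness of limits in probability then forces $\F{\overline\rho}=0$ $\tilde{\mathbb P}$-a.s.\ for this particular $(t,\varphi)$. To upgrade this to a single full-probability event on which the identity holds for every $t\in[0,T]$ and every $\varphi\in C_c^2(\R^d)$, I would invoke separability of $[0,T]$ and of $C_c^2(\R^d)$ (with respect to a suitable $C^2$-norm on an exhausting sequence of compacts) to select a countable dense subfamily, assemble the corresponding countable union of $\tilde{\mathbb P}$-null sets, and extend the identity by continuity in $(t,\varphi)$.

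Next I would verify time continuity. Since $\rho\in\mathcal P(C([0,T],\R^d))$ is supported on continuous paths, for $\phi\in C_b(\R^d)$ and $t_n\to t$ we have $\int\phi\,\mathrm{d}\overline\rho_{t_n}=\int\phi(\gamma(t_n))\,\mathrm{d}\rho(\gamma)\to\int\phi(\gamma(t))\,\mathrm{d}\rho(\gamma)=\int\phi\,\mathrm{d}\overline\rho_{t}$ by dominated convergence, so $\overline\rho\in C([0,T],\mathcal P(\R^d))$. The second-moment finiteness $\overline\rho_t\in\mathcal P_2(\R^d)$ demanded by the definition of weak solution follows from Lemma~\ref{lem:momest}: on the Skorokhod space $\overline\rho_t^J\to\overline\rho_t$ weakly $\tilde{\mathbb P}$-a.s., and lower semicontinuity of $\mu\mapsto\int|x|^2\,\mathrm{d}\mu$ under weak convergence, combined with the uniform bound $\sup_J\mathbb{E}[\int|x|^2\,\mathrm{d}\overline\rho_t^J]\le K_1(T)$, yields $\int|x|^2\,\mathrm{d}\overline\rho_t<\infty$ $\tilde{\mathbb P}$-a.s.

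I expect the trickiest point to be promoting the fixed-$(t,\varphi)$ a.s.\ statement to one holding on a single $\tilde{\mathbb P}$-null exceptional set for every admissible $(t,\varphi)$. Continuity of $t\mapsto\F{\overline\rho}(\omega)$ requires continuity of the nonlocal terms $s\mapsto\mathcal M_\beta(\overline\rho_s)$ and $s\mapsto\mathcal C_\beta(\overline\rho_s)$, which one extracts from weak continuity of the measure flow $s\mapsto\overline\rho_s$ together with continuity of $\mathcal M_\beta$ and $\mathcal C_\beta$ on families of probability measures with uniformly bounded second moments; Lemmas~\ref{lem:estimate_weightedmean1} and~\ref{lem:estimate_weightedcov} together with Lemma~\ref{lem:momest} supply the required uniform integrability. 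Continuity in $\varphi$ (with $t$ fixed) is immediate from the explicit form of $\F{\overline\rho}$ and the second-moment control on $\overline\rho_s$, so once joint continuity is in hand the extension argument goes through and the proof closes.
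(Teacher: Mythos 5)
Your proposal is correct and follows essentially the same line as the paper: combine Lemmas~\ref{lem:funcconvzero} and~\ref{lem:funcconvlimit} to deduce $\F{\overline\rho}=0$ $\tilde{\mathbb P}$-a.s., and verify the time-continuity condition via dominated convergence exploiting that $\rho$ is supported on continuous paths, followed by a change of variables through the evaluation map. The paper closes the functional identity by the triangle inequality $\E{|\F{\overline\rho}|}\le\E{|\F{\overline\rho}-\F{\overline\rho^J}|}+\E{|\F{\overline\rho^J}|}\to 0$, and additionally writes $|\F{\overline\rho}|=\E{|\F{\overline\rho}|}$, which presumes the limiting random variable is deterministic (a fact only established retroactively by Corollary~\ref{cor:uniqueness}). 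Your route via uniqueness of limits in probability reaches the same a.s.\ identity without that presumption. You also supply two details the paper leaves implicit: the upgrade from a fixed-$(t,\varphi)$ null set to a single exceptional set valid for all $(t,\varphi)$ (via a countable dense subfamily in $[0,T]\times C_c^2(\R^d)$ and continuity of $\mathbb{F}_{\varphi,t}$), and the verification that $\overline\rho_t\in\mathcal P_2(\R^d)$ via weak lower semicontinuity of the second moment together with the uniform bound from Lemma~\ref{lem:momest}. Both of these are genuine ingredients of the weak-solution requirement $\overline\rho\in C([0,T];\mathcal P_2(\R^d))$, so your additions tighten rather than change the argument.
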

The above result provides that the limiting measure of convergent subsequences indeed (weakly) solves the FPE.

\subsection{Properties of the McKean-Vlasov SDE}

For our proof of the unique solvability of the FPE \eqref{eq:fpe} we require existence and uniqueness of the solution to the following associated SDE of McKean-Vlasov type:

\begin{equation}
    \label{eq:mckeansde}
    \mathrm{d}\overline{\theta}_t = -(\overline{\theta}_t-\M{\rho_t})\mathrm{d}t + \sqrt{2\Li \C{\rho_t}}\mathrm{d}W_t\,,
\end{equation}
where $\rho_t = \mathbb{P}^{\overline{\theta}_t}$ denotes the law of $\overline{\theta}_t$.
This SDE describes the behavior of a representative particle in the mean field limit, and if the solution exists, its law solves the FPE \eqref{eq:fpe}. The existence and solution are nontrivial to obtain since the drift and diffusion term depend in nonlinear ways on the law of the solution itself. Hence, we dedicate the following Theorem to the existence and the one after it to the uniqueness of the solution to this SDE. We follow the strategy of \cite[Theorem 3.2]{cboanalysis}, wherein the authors obtain unique solvability for the McKean SDE for CBO using Schaefer's Fixed point Theorem (SFPT)\footnote{cf. Thm \ref{thm:schaefersfpt} in the Appendix}. The difference in our analysis lies in the fact that we have the covariance matrix of the reweighted measure instead of a diagonal diffusion matrix. We emphasize that this extension is non-trivial.

Next, we obtain estimates that will be useful in proving both existence and uniqueness of the solution to the McKean-Vlasov type SDE and is made in analogy to \cite[Lemma 3.2]{cboanalysis}. 
\begin{lemma} \label{lem:substitutestability}
    Let \sw{$\mu$, ${\nu}\in\mathcal{P}_6(\R^d)$}  with \[\Big(\int_{\R^d} |x|^6\, \mathrm{d}\mu(x)\Big)^{\frac16},~~\jz{\Big(}\int_{\R^d} |\widehat x|^6 \mathrm{d}\nu(\widehat x)\Big)^{\frac16}\le K\, .\] 
    Then there exist \sw{$c_0,c_1>0$} such that
        \begin{enumerate}
        \item[(i)] $\sw{|\M{\mu}-\M{\nu}| \leq c_0 W_2(\mu,{\nu})}$,
        \item[(ii)] $\|\C{\mu}-\C{\nu}\|_F  \leq c_1 W_2(\mu,{\nu})$,
        \end{enumerate}
    where \sw{$c_0$ and} $c_1$ depend on $\mu$, $\nu$ solely through $K$.
  \end{lemma}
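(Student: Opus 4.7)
The strategy is to fix an optimal $W_2$-coupling $\pi\in\Pi(\mu,\nu)$ so that $W_2(\mu,\nu)^2=\iint |x-y|^2\,\mathrm{d}\pi(x,y)$, and to represent both the weighted mean and the weighted covariance as ratios involving $\omega(x):=e^{-\beta f(x)}$ and the partition functions $Z_\mu:=\int \omega\,\mathrm{d}\mu$, $Z_\nu:=\int\omega\,\mathrm{d}\nu$. Three preliminary ingredients drive everything. First, by Assumption~\ref{asp:costfuns}(1) and the mean value theorem applied to $s\mapsto e^{-\beta s}$, one has $|\omega(x)-\omega(y)|\le \beta e^{-\beta f_*}\mathrm{Lip}(f)(|x|+|y|)\,|x-y|$. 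Second, a lower bound $Z_\mu,Z_\nu\ge z(K)>0$ depending only on $K$ is obtained by combining Markov's inequality $\mu(\{|x|>R\})\le K^2/R^2$ with the quadratic upper bound on $f-f_*$ from Assumption~\ref{asp:costfuns}(2) and optimising in $R$. Third, Lemma~\ref{lem:estimate_weightedmean1} already gives $|\mathcal{M}_\beta(\mu)|,|\mathcal{M}_\beta(\nu)|\le \sqrt{b_1+b_2 K^2}$.

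For (i), I would use the decomposition
\begin{equation*}
\mathcal{M}_\beta(\mu)-\mathcal{M}_\beta(\nu)=\frac{1}{Z_\mu}\iint\bigl(x\,\omega(x)-y\,\omega(y)\bigr)\,\mathrm{d}\pi(x,y)+\mathcal{M}_\beta(\nu)\,\frac{Z_\nu-Z_\mu}{Z_\mu}.
\end{equation*}
The integrand of the first term is bounded by $|x-y|\,\omega(x)+|y|\,|\omega(x)-\omega(y)|$, and using $\omega\le 1$ together with the Lipschitz estimate gives an integrand of order $(1+|x|+|y|)(|x|+|y|)|x-y|$. Cauchy–Schwarz against $|x-y|^2$ reduces the estimate to a factor that only involves second moments of $\mu,\nu$, both controlled by $K^2$. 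For the partition function gap, $|Z_\nu-Z_\mu|\le \iint|\omega(x)-\omega(y)|\,\mathrm{d}\pi$ is handled the same way. Combined with the $K$-dependent lower bound on $Z_\mu$ and the uniform bound on $\mathcal{M}_\beta(\nu)$, this yields the required constant $c_0=c_0(K)$.

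For (ii), I would first reduce the covariance to the weighted second moment via $\mathcal{C}_\beta(\mu)=\tfrac{1}{Z_\mu}\int x\otimes x\,\omega\,\mathrm{d}\mu-\mathcal{M}_\beta(\mu)\otimes\mathcal{M}_\beta(\mu)$. The product-of-means term is controlled by (i):
\begin{equation*}
\|\mathcal{M}_\beta(\mu)\otimes\mathcal{M}_\beta(\mu)-\mathcal{M}_\beta(\nu)\otimes\mathcal{M}_\beta(\nu)\|_F\le\bigl(|\mathcal{M}_\beta(\mu)|+|\mathcal{M}_\beta(\nu)|\bigr)\,|\mathcal{M}_\beta(\mu)-\mathcal{M}_\beta(\nu)|,
\end{equation*}
which is $O(K)\cdot c_0 W_2(\mu,\nu)$. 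For the weighted second moment term, the same coupling decomposition as in (i) applies with $x$ replaced by $x\otimes x$, yielding an integrand bounded by $\|x\otimes x-y\otimes y\|_F\,\omega(x)+|y|^2\,|\omega(x)-\omega(y)|$. Using $\|x\otimes x-y\otimes y\|_F\le(|x|+|y|)|x-y|$ and the Lipschitz estimate, the pointwise bound is of order $(|x|+|y|+|y|^2(|x|+|y|))|x-y|$.

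The main technical point, and the sole reason for the sixth-moment hypothesis, arises here: when one applies Cauchy–Schwarz to isolate the $W_2$-factor $\bigl(\iint|x-y|^2\mathrm{d}\pi\bigr)^{1/2}$, the residual factor is $\iint |y|^4(|x|+|y|)^2\,\mathrm{d}\pi$. By Hölder's inequality with exponents $3$ and $3/2$ (or the trivial splitting $|y|^4|x|^2\le|x|^6+|y|^6$ followed by Young), this is bounded by $C\bigl(\int|x|^6\mathrm{d}\mu+\int|y|^6\mathrm{d}\nu\bigr)\le C K^6$. This is the step where the assumption $\mu,\nu\in\mathcal{P}_6(\mathbb{R}^d)$ is essential; at any lower moment the estimate would only produce a $W_p$-bound for some $p>2$. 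Assembling the mean-square pieces with the lower bounds on $Z_\mu,Z_\nu$ and applying the same treatment to $(Z_\nu-Z_\mu)/(Z_\mu Z_\nu)\cdot\int y\otimes y\,\omega\,\mathrm{d}\nu$ closes (ii) with a constant $c_1=c_1(K)$ depending on $\mu,\nu$ only through $K$.
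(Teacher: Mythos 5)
Your proposal follows essentially the same route as the paper: fix a coupling $\pi$, decompose the difference of weighted means and weighted second moments into a ``pointwise'' integrand term plus a partition-function gap term $(Z_\nu-Z_\mu)/(Z_\mu Z_\nu)$, bound the integrands via the Lipschitz estimate $|\omega(x)-\omega(y)|\lesssim (|x|+|y|)|x-y|$ together with $\|x\otimes x-y\otimes y\|_F\le(|x|+|y|)|x-y|$, and apply Cauchy--Schwarz against $\iint|x-y|^2\,\mathrm d\pi$ so that the residual factor is controlled by sixth moments. The only cosmetic differences are that the paper cites \cite[Lemma~3.1 and 3.2]{cboanalysis} for the reweighting bound $\omega^f_\beta/Z\le c_K$ and for part (i), whereas you sketch both from scratch, and your bookkeeping uses $\omega\le 1$ where one should more precisely use $\omega\le e^{-\beta f_*}$ (absorbed into the constant, so harmless).
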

  
We now provide the existence of the solution to the McKean-Vlasov type SDE using a fixed point argument which is made closely following the strategy of the proof in \cite[Theorem 3.1]{cboanalysis}.
\begin{theorem}\label{thm:mckeanexistence}
    Let $\overline\theta_0\sim\rho_0$ with $\rho_0$ satisfying Assumption \ref{asp:moments} and let $f$ satisfy Assumption~\ref{asp:costfuns}. For all $T\ge0$ there exists a strong solution $(\overline\theta_t)_{t\in[0,T]}$ of the McKean SDE \eqref{eq:mckeansde}.
\end{theorem}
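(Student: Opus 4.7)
The plan is to apply the Leray-Schauder fixed point theorem, closely following the strategy of \cite[Theorem 3.1]{cboanalysis}. The main new difficulty compared to that proof is that the diffusion coefficient $\sqrt{2\Li\C{\rho_t}}$ is the matrix square root of a full covariance matrix rather than a diagonal scaling, and this matrix square root is only Hölder-$\tfrac12$ continuous on the positive semidefinite cone. I would work on the Banach space $X := C([0,T]; L^2(\Omega;\R^d))$ with the sup-$L^2$ norm. For $u \in X$ set $\mu_t^u := \mathbb{P}^{u_t}$, and define the fixed-point map $\mathcal T: X \to X$ by $\mathcal T(u) := \tilde\theta$, where $\tilde\theta$ is the unique strong solution of the \emph{classical} (non-McKean), linear-in-state SDE
\begin{equation*}
\mathrm d\tilde\theta_t = -\bigl(\tilde\theta_t - \M{\mu_t^u}\bigr)\mathrm dt + \sqrt{2\Li\,\C{\mu_t^u}}\,\mathrm dW_t, \qquad \tilde\theta_0 \sim \rho_0.
\end{equation*}
Well-posedness of this auxiliary SDE follows because $t \mapsto \M{\mu_t^u}$ and $t \mapsto \C{\mu_t^u}$ are bounded and continuous by Lemmas~\ref{lem:estimate_weightedmean1}, \ref{lem:estimate_weightedcov}, and \ref{lem:substitutestability}. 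A fixed point $\tm = \mathcal T(\tm)$ is exactly a strong solution of \eqref{eq:mckeansde}.

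To apply Leray-Schauder I would verify three properties of $\mathcal T$ on a closed, convex, bounded set of the form $K = \{u \in X : \sup_{t\in[0,T]} \E{|u_t|^6} \le C_*\}$ for $C_*$ chosen sufficiently large. \emph{Invariance $\mathcal T(K) \subseteq K$}: an Itô-Gronwall argument applied to $|\tilde\theta_t|^{2p}$ for $p = 1, 2, 3$, together with Lemmas~\ref{lem:estimate_weightedmean1} and \ref{lem:estimate_weightedcov} to bound the moments of the reweighted measure $L_\beta \mu^u$ by those of $\mu^u$, yields a uniform bound on $\sup_t \E{|\tilde\theta_t|^6}$ depending only on $C_*$, $T$ and $\rho_0$; exploiting the dissipative linear drift $-\tilde\theta_t$ then makes this bound at most $C_*$ for $C_*$ large enough. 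This parallels Lemma~\ref{lem:momest}. \emph{Continuity of $\mathcal T$}: if $u^n \to u$ in $X$ then $W_2(\mu_t^{u^n}, \mu_t^u) \to 0$ uniformly in $t$, and Lemma~\ref{lem:substitutestability} combined with the Hölder-$\tfrac12$ continuity of the matrix square root yields uniform convergence of both coefficients; a standard Itô isometry and Gronwall stability argument for the linear SDE then gives $\mathcal T(u^n) \to \mathcal T(u)$ in $X$. \emph{Compactness of $\mathcal T$}: the uniform sixth-moment bound plus a time-regularity estimate $\E{|\tilde\theta_t - \tilde\theta_s|^2} \le C|t - s|$ coming from the boundedness of the coefficients provide equicontinuity in time and tightness in the state variable, and an Arzelà-Ascoli-type argument then shows that $\mathcal T(K)$ is relatively compact in $X$.

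The main obstacle will be step (2), the continuity estimate: because the matrix square root is not Lipschitz near singular matrices (unlike the essentially diagonal diffusion of CBO), we only have Hölder-$\tfrac12$ dependence on $\C{\mu_t^u}$, and the Itô isometry has to absorb this defect in an $L^2$-stability estimate for the auxiliary linear SDE. Fortunately Leray-Schauder requires only continuous compactness rather than contraction, so Hölder continuity suffices. A secondary subtlety is balancing constants in step (1) so that $\mathcal T$ genuinely preserves $K$; this relies essentially on the dissipative linear drift $-\tilde\theta_t$ rather than a naive application of Gronwall to the moments of $\tilde\theta$. Once the fixed point has been produced, its law $(\rho_t)_{t \in [0,T]}$ yields the desired strong solution of \eqref{eq:mckeansde}.
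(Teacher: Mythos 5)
Your proposal follows the same high-level blueprint as the paper — linearize, set up a map whose fixed points solve \eqref{eq:mckeansde}, and close with a fixed-point theorem — but the choice of fixed-point space is crucially different, and your compactness step does not go through as stated. You place $\mathcal T$ on the \emph{path space} $X = C([0,T];L^2(\Omega;\R^d))$, whereas the paper's map sends pairs $(u,D)\in C([0,T],\R^d\times\R^{d\times d})$ of deterministic coefficient curves to the reweighted mean and square-root covariance curves $(\mathcal M_\beta(\nu_t),\sqrt{\mathcal C_\beta(\nu_t)})_{t\in[0,T]}$ induced by the linearized SDE. The reason this matters: the paper's target space has a finite-dimensional range $\R^d\times\R^{d\times d}$ at each time, so an Hölder-$\tfrac14$ estimate in $t$ plus the compact embedding $C^{0,1/4}\hookrightarrow C$ immediately gives compactness of $\mathcal T$. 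In your space $X$ the fiber at each $t$ is the infinite-dimensional Hilbert space $L^2(\Omega;\R^d)$, and your claimed compactness argument — sixth-moment bound, tightness of laws, $\mathbb E|\tilde\theta_t-\tilde\theta_s|^2\le C|t-s|$, then ``Arzelà–Ascoli'' — does not yield relative compactness. Arzelà–Ascoli in $C([0,T];Y)$ requires \emph{pointwise relative compactness} in $Y$, and a uniform sixth-moment (or any moment) bound only gives uniform integrability, hence weak $L^1$-compactness via Dunford–Pettis, not strong relative compactness in $L^2(\Omega)$; and tightness of laws is a statement about $\mathcal P(\R^d)$, not about precompactness of the random variables themselves in $L^2(\Omega)$. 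So as written your map $\mathcal T$ is not shown to be compact on $X$, and Schauder/Leray--Schauder cannot be invoked.

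One can in principle repair your route: by Duhamel, $\mathcal T(u)_t$ is a deterministic drift integral plus a stochastic integral $\int_0^t e^{-(t-s)}\sqrt{2\lambda^{-1}\mathcal C_\beta(\mu^u_s)}\,\mathrm dW_s$, and the Itô isometry gives an isometry from the coefficient curve into $L^2(\Omega)$, so compactness of the family of curves $t\mapsto\sqrt{\mathcal C_\beta(\mu^u_t)}$ (which one would prove via the Hölder estimate of Lemma~\ref{lem:substitutestability} and the Powers--Størmer / matrix square-root bound from Lemma~\ref{lem:auxiliary_sqrt_cov_fixed}) would propagate to compactness of $\mathcal T(K)$. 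But notice this repair amounts to first proving exactly the paper's statement — compactness of the coefficient-curve map — and then pushing it forward through the isometry. The paper avoids this indirection by taking the coefficient curves themselves as the fixed-point space. You should either (a) switch to the paper's space $C([0,T],\R^d\times\R^{d\times d})$ and prove compactness via the Hölder embedding, or (b) explicitly carry out the isometry reduction so that compactness in your $X$ is inherited from compactness of the induced coefficient curves, rather than claimed from moment bounds and tightness alone.
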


\sw{Using Assumption~\ref{assumpt:evs}}, we verify that solutions to the McKean-Vlasov type SDE are unique in the following sense.
\begin{theorem}\label{thm:mckeanuniqueness}
     Let Assumption \ref{assumpt:evs} hold, let $\overline\theta_0\sim\rho_0$ with $\rho_0$ according to Assumption \ref{asp:moments} and let $f$ satisfy Assumption~\ref{asp:costfuns}. Then the solution of the McKean SDE \eqref{eq:mckeansde} is unique up to $\mathbb P$-indistinguishability. 
\end{theorem}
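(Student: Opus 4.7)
The plan is a synchronous coupling argument combined with Grönwall's inequality. Suppose $(\bar\theta^1_t)_{t\in[0,T]}$ and $(\bar\theta^2_t)_{t\in[0,T]}$ are two strong solutions of \eqref{eq:mckeansde} on the same probability space, driven by the same Brownian motion $(W_t)_{t\in[0,T]}$ and with the same initial datum $\bar\theta_0 \sim \rho_0$. Denote their laws by $\rho^1_t = \mathbb P^{\bar\theta^1_t}$ and $\rho^2_t=\mathbb P^{\bar\theta^2_t}$. First, I would establish a uniform sixth-moment bound $\sup_{t\in[0,T]}\int |x|^6\,\mathrm{d}\rho^i_t(x) \le K$ for $i=1,2$ by applying It\^o's formula to $|\bar\theta^i_t|^6$ and combining Lemmas~\ref{lem:estimate_weightedmean1} and \ref{lem:estimate_weightedcov} with a Grönwall argument, mirroring Lemma~\ref{lem:momest} in the McKean setting; this ensures the hypotheses of Lemma~\ref{lem:substitutestability} are met uniformly in $t$.

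Next, writing $\Delta_t := \bar\theta^1_t - \bar\theta^2_t$, I would apply It\^o's formula to $|\Delta_t|^2$ to obtain
\begin{align}
\mathrm{d}|\Delta_t|^2
&= -2\,\Delta_t^\top\bigl(\Delta_t - (\M{\rho^1_t}-\M{\rho^2_t})\bigr)\mathrm{d}t \\
&\quad + \Tr{\bigl(\sqrt{2\lambda^{-1}\C{\rho^1_t}}-\sqrt{2\lambda^{-1}\C{\rho^2_t}}\bigr)^2}\mathrm{d}t + \mathrm{d}M_t,
\end{align}
where $M_t$ is a local martingale (which becomes a true martingale after taking expectation using the moment bound). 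Taking expectation and using $2a\cdot b \le |a|^2+|b|^2$ on the drift cross term gives
\begin{equation}
\frac{\mathrm d}{\mathrm dt}\E{|\Delta_t|^2} \le -\E{|\Delta_t|^2} + \E{|\M{\rho^1_t}-\M{\rho^2_t}|^2} + 2\lambda^{-1}\E{\|\sqrt{\C{\rho^1_t}}-\sqrt{\C{\rho^2_t}}\|_F^2}.
\end{equation}
For the mean term I would invoke Lemma~\ref{lem:substitutestability}(i) to bound $|\M{\rho^1_t}-\M{\rho^2_t}|^2 \le c_0^2 W_2(\rho^1_t,\rho^2_t)^2$, and since $(\bar\theta^1_t,\bar\theta^2_t)$ forms a coupling of $(\rho^1_t,\rho^2_t)$, $W_2(\rho^1_t,\rho^2_t)^2 \le \E{|\Delta_t|^2}$.

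The main obstacle is the diffusion term: one must control the difference of matrix square roots $\sqrt{\C{\rho^1_t}}-\sqrt{\C{\rho^2_t}}$ in Frobenius norm. The map $A\mapsto \sqrt{A}$ is only H\"older-$\frac12$ continuous on the cone of positive semi-definite matrices in general, which would be insufficient for a Grönwall argument. This is precisely where Assumption~\ref{assumpt:evs} enters: since both $\C{\rho^1_t}$ and $\C{\rho^2_t}$ satisfy $\C{\rho^i_t}\succeq \bar\sigma\,\mathrm{Id}$, their principal square roots satisfy $\sqrt{\C{\rho^i_t}}\succeq \sqrt{\bar\sigma}\,\mathrm{Id}$, and the classical Lipschitz estimate for the matrix square root on the strictly positive cone (see e.g.\ the Sylvester-equation argument yielding $\|\sqrt A-\sqrt B\|_F \le (2\sqrt{\bar\sigma})^{-1}\|A-B\|_F$ whenever $A,B\succeq\bar\sigma\,\mathrm{Id}$) applies. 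Combined with Lemma~\ref{lem:substitutestability}(ii) this gives
\begin{equation}
\E{\|\sqrt{\C{\rho^1_t}}-\sqrt{\C{\rho^2_t}}\|_F^2} \le \frac{c_1^2}{4\bar\sigma}\,W_2(\rho^1_t,\rho^2_t)^2 \le \frac{c_1^2}{4\bar\sigma}\,\E{|\Delta_t|^2}.
\end{equation}

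Assembling the estimates yields $\frac{\mathrm d}{\mathrm dt}\E{|\Delta_t|^2} \le C\,\E{|\Delta_t|^2}$ for a constant $C>0$ depending on $\bar\sigma$, $\lambda$, $\beta$, $f$ and $K$. Since $\E{|\Delta_0|^2}=0$, Grönwall's inequality gives $\E{|\Delta_t|^2}=0$ for all $t\in[0,T]$, i.e.~$\bar\theta^1_t=\bar\theta^2_t$ $\mathbb P$-a.s.\ for every $t$, and by continuity of sample paths the two processes are $\mathbb P$-indistinguishable.
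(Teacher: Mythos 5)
Your proposal is correct and follows essentially the same route as the paper: apply It\^o's formula to the squared difference of two solutions, control the drift and diffusion differences via Lemma~\ref{lem:substitutestability}, use Assumption~\ref{assumpt:evs} to obtain a Lipschitz (rather than merely H\"older) bound on the matrix square root, and close with Gr\"onwall. The only cosmetic differences are that the paper phrases the argument in terms of two fixed points of the Schaefer operator $\mathcal{T}$ (inheriting the moment bounds from Theorem~\ref{thm:mckeanexistence}) and invokes its Lemma~\ref{lem:auxiliary_sqrt_cov_fixed} in the spectral norm (picking up a factor $d$ and needing only one matrix bounded below), whereas you couple two McKean solutions directly and use the slightly cleaner Sylvester-equation bound $\|\sqrt{A}-\sqrt{B}\|_F\le (2\sqrt{\bar\sigma})^{-1}\|A-B\|_F$ for $A,B\succeq\bar\sigma\,\mathrm{Id}$.
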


\subsection{Uniqueness of the FPE}
Until now we have been concerned with weakly convergent subsequences of the empirical measures, and shown that their limit is indeed a solution to the FPE. For the convergence of the whole sequence, it suffices to show that the solution of the FPE is unique. This is shown in Corollary \ref{cor:uniqueness}, which is analogous to \cite[Lemma 3.2]{huangcbo} and uses the results of the previous section, most crucially the previously obtained unique solvability of the associated McKean-Vlasov type SDE.

In order to prove the Corollary, we also need the following auxiliary result, which is analogous to \cite[Theorem 4.3]{huangcbo}:
\begin{proposition} \label{prop:linearizedfpeunique}
    For $T>0$, let $(u,C)\in C([0,T],\R^d\times \R^{d\times d})$ and $\rho_0$ according to Assumption \ref{asp:moments}. Then the following linear PDE 
    \begin{align}
    \label{eq:linearizedfpe}
        \frac{\partial \overline{\rho}}{\partial t} &= \Li \sumi{i=1}{d}\sumi{k=1}{d}\frac{\partial^2}{\partial x_i\partial x_k}[(C_t)_{ik}{\rho}_t] - \nabla\cdot[(x-u_t)\rho_t]
    \end{align}
    has a unique weak solution $\overline\rho\in C([0,T];\PM_2(\R^d))$. 
\end{proposition}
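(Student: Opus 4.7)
The plan is twofold: establish existence by constructing a solution through the \emph{linear} SDE naturally associated to \eqref{eq:linearizedfpe} (whose coefficients $(u_t,C_t)$ are now given, law-independent continuous paths), and establish uniqueness by a duality argument against a backward terminal-value problem. The overall structure parallels \cite[Theorem 4.3]{huangcbo}, but since $(u,C)$ is an input rather than being derived from the unknown, the argument reduces to a classical linear Fokker--Planck problem.

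For \textbf{existence}, consider the SDE
\[
dX_t = -(X_t - u_t)\,dt + \sqrt{2\Li C_t}\,dW_t, \qquad X_0 \sim \rho_0,
\]
where the symmetric square root is well defined since in the application $C_t=\C{\rho_t}$ is symmetric positive semi-definite. The coefficients are affine in $x$ with continuous, hence bounded, $t$-dependence, so classical SDE theory yields a unique strong solution. Gronwall's inequality together with Assumption~\ref{asp:moments} gives $\sup_{t\le T}\bbE[|X_t|^2]<\infty$, so $\overline\rho_t := \mathbb P^{X_t}\in\PM_2(\R^d)$. Applying It\^o's formula to $\varphi(X_t)$ for $\varphi\in C_c^2(\R^d)$ and taking expectations shows that $\overline\rho$ satisfies the weak form of \eqref{eq:linearizedfpe}; continuity $\overline\rho\in C([0,T];\PM_2(\R^d))$ follows from the path-continuity of $X_t$ together with the uniform integrability of $|X_t|^2$ on $[0,T]$.

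For \textbf{uniqueness}, let $\overline\rho^1,\overline\rho^2\in C([0,T];\PM_2(\R^d))$ be two weak solutions sharing the initial datum $\rho_0$. Fix $\tau\in(0,T]$ and $\phi\in C_c^\infty(\R^d)$, and consider the backward dual PDE
\[
\partial_t \varphi + \Li \sum_{i,k=1}^d (C_t)_{ik}\,\partial^2_{ik}\varphi - (x-u_t)^\top \nabla \varphi = 0, \qquad \varphi(\tau,\cdot)=\phi.
\]
By Feynman--Kac, $\varphi(t,x)=\bbE[\phi(X_\tau^{t,x})]$, where $X^{t,x}$ solves the forward SDE with $X^{t,x}_t=x$. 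Because the drift is affine with Jacobian $-\mathrm{Id}$, solving the variation-of-constants formula gives $X_\tau^{t,x} = e^{-(\tau-t)}x + Y_{t,\tau}$, with $Y_{t,\tau}$ a random vector independent of $x$, so that $\partial^\alpha_x \varphi(t,x) = e^{-|\alpha|(\tau-t)}\bbE[\partial^\alpha\phi(e^{-(\tau-t)}x + Y_{t,\tau})]$ is uniformly bounded on $[0,\tau]\times\R^d$ for any multi-index $\alpha$. Using a smooth cut-off $\chi_R$ to truncate $\varphi(t,\cdot)$ to $C_c^2(\R^d)$, testing $\overline\rho^1_t - \overline\rho^2_t$ against $\chi_R\varphi(t,\cdot)$ and sending $R\to\infty$ (the error vanishes by the uniform second-moment bound on $\overline\rho^i_t$), one checks via the dual equation and the weak forms of $\overline\rho^i$ that
\[
t\mapsto \int_{\R^d}\varphi(t,x)\,\mathrm{d}(\overline\rho^1_t-\overline\rho^2_t)(x)
\]
is absolutely continuous with vanishing derivative. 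It vanishes at $t=0$, hence at $t=\tau$, and we obtain $\int\phi\,\mathrm{d}(\overline\rho^1_\tau-\overline\rho^2_\tau)=0$. Varying $\phi\in C_c^\infty(\R^d)$ and $\tau\in(0,T]$ gives $\overline\rho^1=\overline\rho^2$.

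The main obstacle is controlling the regularity and growth of the dual solution $\varphi$ sharply enough to legitimately use it as test function against measures of only finite second moment on all of $\R^d$. Here the affine structure of the forward SDE (making $x\mapsto X_\tau^{t,x}$ a deterministic contraction plus a law-independent random shift) is what saves us: it immediately transfers the compact support and boundedness of all derivatives of $\phi$ into uniform bounds on $\varphi,\nabla\varphi,\nabla^2\varphi$, after which the truncation to $C_c^2$ and the error control are standard. Note that no non-degeneracy of $C_t$ is needed for this step, which is important since the proposition allows $C$ to be merely continuous.
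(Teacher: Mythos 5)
Your proof follows essentially the same strategy the paper uses: existence via the associated linear SDE and It\^o's formula, and uniqueness via duality against a Kolmogorov backward terminal-value problem solved by Feynman--Kac, with the affine drift yielding a deterministic contraction $x\mapsto e^{-(\tau-t)}x+Y_{t,\tau}$ whose Jacobian gives uniform bounds on $\nabla\varphi$. You are in fact somewhat more careful than the paper on two points: you get the correct sign $e^{-(\tau-t)}\le 1$ for the flow Jacobian (the paper writes $\exp(s-t)$, a sign slip given the drift $-(Y-u)$, though immaterial on a bounded time horizon), and you explicitly handle the cutoff $\chi_R$ needed to pair the non-compactly-supported dual solution against measures in $\PM_2(\R^d)$, a step the paper leaves implicit when it formally integrates $h_t$ against $\delta\rho_t$.
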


Using the above result for the unique solvability of a linearized version of the FPE, we are able to prove the pointwise uniqueness, which is stated in the following Corollary:
\begin{corollary} \label{cor:uniqueness} 
    Let $\rho_0$ satisfy Assumption \ref{asp:moments} and let it be the initial data of two weak solutions $\overline\rho^1, \overline\rho^2$ to the FPE \eqref{eq:fpe}. 
    \sw{Under Assumption~\ref{assumpt:evs}} we have
    \begin{align}
        \underset{t\in[0,T]}{\sup}W_2(\rho_t^1, \rho_t^2) &= 0.
    \end{align}
\end{corollary}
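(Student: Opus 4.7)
}
The plan is to adapt the strategy of \cite[Lemma 3.2]{huangcbo} by combining the unique solvability of the linearized FPE (Proposition~\ref{prop:linearizedfpeunique}) with the uniqueness of the associated McKean--Vlasov SDE (Theorem~\ref{thm:mckeanuniqueness}). Given two weak solutions $\overline\rho^i = (\rho_t^i)_{t\in[0,T]} \in C([0,T],\PM_2(\R^d))$, $i=1,2$, of \eqref{eq:fpe} sharing the initial datum $\rho_0$, the idea is to realize each $\rho^i$ as the one-dimensional marginal law of a representative particle evolving under a \emph{linear} (non-interacting) SDE with prescribed time-dependent coefficients, and then invoke McKean uniqueness to conclude.

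First I would freeze the coefficients along each solution: for $i=1,2$ set $u_t^i := \M{\rho_t^i}$ and $C_t^i := \C{\rho_t^i}$. By Lemma~\ref{lem:substitutestability} (Lipschitz continuity of $\M{\cdot}$ and $\C{\cdot}$ in $W_2$) together with the continuity of $t \mapsto \rho_t^i$ in $\PM_2(\R^d)$ coming from the definition of weak solution, the maps $(u^i, C^i)$ lie in $C([0,T], \R^d \times \R^{d\times d})$, with $C^i$ positive semi-definite. On a common probability space carrying a Brownian motion $(W_t)$ and a $\rho_0$-distributed initial random variable $\tm_0$, I would then construct the unique strong solution of the linear SDE $\mathrm{d}\tm^i_t = -(\tm^i_t - u_t^i)\,\mathrm{d}t + \sqrt{2\Li C_t^i}\,\mathrm{d}W_t$ with $\tm^i_0 = \tm_0$, which exists by standard theory since the coefficients are deterministic and continuous in time.

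Applying It\^o's formula to $\varphi \in C_c^2(\R^d)$ shows that the marginal law $\widetilde\rho^i_t := \mathbb P^{\tm^i_t}$ is a weak solution of the linearized FPE \eqref{eq:linearizedfpe} with $(u,C) = (u^i, C^i)$ and initial datum $\rho_0$; by construction the original $\rho^i$ is also such a weak solution, because the nonlinear FPE \eqref{eq:fpe} evaluated along $\rho^i$ coincides with \eqref{eq:linearizedfpe} for these frozen coefficients. Proposition~\ref{prop:linearizedfpeunique} then forces $\widetilde\rho^i_t = \rho^i_t$ for every $t$. Consequently each $\tm^i$ is a strong solution of the nonlinear McKean--Vlasov SDE \eqref{eq:mckeansde}, since $u_t^i = \M{\mathbb P^{\tm^i_t}}$ and $C_t^i = \C{\mathbb P^{\tm^i_t}}$. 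Because $\tm^1$ and $\tm^2$ are driven by the same Brownian motion with the same initial condition, Theorem~\ref{thm:mckeanuniqueness} (applied under Assumption~\ref{assumpt:evs}, imposed on both $\rho^1$ and $\rho^2$) gives $\tm^1_t = \tm^2_t$ $\mathbb P$-a.s.\ for all $t$, whence $\rho^1_t = \rho^2_t$ and thus $W_2(\rho^1_t, \rho^2_t) = 0$ uniformly in $t\in[0,T]$.

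The main obstacle is the middle step: rigorously identifying the marginal law of the linear SDE with the prescribed $\rho^i$. This requires the It\^o computation for test functions $\varphi \in C_c^2(\R^d)$, together with moment bounds on $\tm^i$ (obtainable by a Lyapunov argument in the spirit of Theorem~\ref{thm:stability}, given the boundedness and continuity of $u^i$ and $C^i$), and continuity of $t \mapsto \sqrt{2\Li C_t^i}$, which follows from continuity of the matrix square root on the cone of positive semi-definite matrices together with continuity of $t\mapsto C_t^i$. Once these technical checks are in place, the chain ``freeze--solve--identify--apply uniqueness'' closes the argument, and the corollary reduces to the already established Proposition~\ref{prop:linearizedfpeunique} and Theorems~\ref{thm:mckeanexistence}--\ref{thm:mckeanuniqueness}.
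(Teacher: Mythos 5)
Your proposal is correct and follows essentially the same strategy as the paper: freeze the coefficients $(u^i_t,C^i_t)=(\M{\rho^i_t},\C{\rho^i_t})$ along each weak solution, realize each $\rho^i$ as the marginal law of the corresponding linear SDE via uniqueness of the linearized FPE (Proposition~\ref{prop:linearizedfpeunique}), observe that each auxiliary process is then a solution of the McKean--Vlasov SDE \eqref{eq:mckeansde}, and conclude via Theorem~\ref{thm:mckeanuniqueness}. The only difference is cosmetic --- you are somewhat more explicit about the synchronous coupling (same Brownian motion and initial random variable) and about the technical checks (continuity of the frozen coefficients, the It\^o identification step), whereas the paper states the identification $\hat\rho^l=\rho^l$ more tersely and finishes with the chain $0=\sup_t\E{|\hat\theta^1_t-\hat\theta^2_t|}\ge\sup_t W_2(\rho^1_t,\rho^2_t)$.
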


\subsection{Proof of Theorem~\ref{thm:main}}
By connecting the results of the previous subsections, we are ready to prove our main result, Theorem~\ref{thm:main}:

\begin{proof}[Proof of Theorem~\ref{thm:main}]
     Due to the assumptions on our initialization, we 
     \sw{are} able to apply propagation of chaos and the initially computed moment estimates, and 
     \sw{obtain with} Lemma~\ref{lem:tight}  that the sequence of measures $\mathfrak F = \{\mathbb P^{\rho^J}\}_{J\in\mathbb N}$ is tight in $\mathcal{P}(\mathcal{P}(C([0,T];\R^{D})))$. Subsequently applying Prohorov's Theorem implies weak relative sequential compactness of $\mathfrak F$. 
     Theorem \ref{thm:weakly_conv_fpe} \sw{shows} that if we take a subsequence of the $\{\rho^J\}_{J\in\mathbb N}$ to be convergent in distribution, the limit of the projections $\{\overline\rho^J\}_{J\in\mathbb N}$ solves the FPE weakly.
     Furthermore, Corollary \ref{cor:uniqueness} implies that the limit of \mk{the projections of } all weakly convergent subsequences of $\mathfrak F$ is equal, implying the weak convergence of the entire sequence $\{\overline\rho^J\}_{J\in\mathbb N}$. 
\end{proof}

\section{Conclusion}
After providing a stability result for the finite particle SDE, we investigated the mean field limit of the system \sw{of CBO and CBS}, successfully showing that it holds under moderate assumptions that are common in the literature. Specifically, we employed \sw{propagation of chaos via a} compactness argument by analyzing the weak convergence of the empirical measures, which additionally yielded that the limiting measure is not only a solution to the FPE, but in fact the only one.


\bibliographystyle{plain}
\bibliography{bibliography.bib}

\appendix
\onecolumn
\section{Proofs of Section~2}
\subsection{Proofs of preliminary results}
\begin{proof}[Proof of Lemma~\ref{lem:estimate_weightedcov}]
\sw{By linearity of the trace operator we have
\begin{align*}
        \|\sqrt{\C{\mu}}\|^{2}_F 
        &= \int_{\R^d}\Tr{(x-\M{\mu})\otimes(x-\M{\mu})}\mathrm{d}L_{\beta}\mu(x)\\
        &= \int_{\R^d}|x-\M{\mu}|^2\mathrm{d}L_{\beta}\mu(x)\\
        &= \int_{\R^d}|x|^2\mathrm{d}L_{\beta}\mu(x)-|\M{\mu}|^{2}\\
        &\leq \int_{\R^d}|x|^2\mathrm{d}L_{\beta}\mu(x)\,,
    \end{align*}
where we used $|x-y|^2 = |x|^2 - 2\langle x,y\rangle +|y|^2$, the definition of \jz{$\M{\mu}=\int_{\R^d}x \,\mathrm{d}L_\beta\mu(x)$} and the fact that $\M{\mu}$ is a constant independent of $x$. We apply Lemma~\ref{lem:estimate_weightedmean1} to deduce that 
\[\|\sqrt{\C{\mu}}\|^{2}_F \le \int|x|^2\mathrm{d}L_{\beta}\mu(x) \le b_1 + b_2 \int |x|^2 \mathrm{d}\mu(x)\,.  \]
Similarly, applying Cauchy-Schwarz inequality we obtain
\begin{align*}
    \| \C{\mu}\|_F^2 &= \Tr{\int_{\R^d\times \R^d} \langle x - \M{\mu},\hat x - \M{\mu} \rangle (x-\M{\mu})\otimes (\hat x-\M{\mu}) \mathrm{d} L_{\beta}\mu( x) \mathrm{d} L_\beta\mu(\hat x)}\\
    &= \int_{\R^d\times \R^d} \langle  x - \M{\mu},\hat x - \M{\mu} \rangle^2 \mathrm{d} L_{\beta}\mu( x) \mathrm{d} L_\beta\mu(\hat x)\\ 
    &\le \int_{\R^d\times \R^d} | x - \M{\mu}|^2 |\hat x - \M{\mu}|^2  \mathrm{d} L_{\beta}\mu( x) \mathrm{d} L_\beta\mu(\hat x)\\
    & = \left(\int_{\R^d} | x - \M{\mu}|^2 \mathrm{d} L_{\beta}\mu( x) \right)^2\\
    & \le \left(\int_{\R^d} | x|^2 \mathrm{d} L_{\beta}\mu( x) \right)^2\,,
\end{align*}
which yields again with Lemma~\ref{lem:estimate_weightedmean1} that
\[\| \C{\mu}\|_F \le b_1 + b_2 \int | x|^2 \mathrm{d}\mu( x)\,. \]
}
\end{proof}

\subsection{Proofs of main results}
\begin{proof}[Proof of Theorem~\ref{thm:stability}]
  The proof strategy is to apply \cite[Thm.\ 3.5]{khasminskii}.
    To this end we verify the assumptions of this theorem. First we
    note that the drift $F$ and diffusion $G$ defined in \eqref{eq:FG}
    are both locally Lipschitz and satisfy a linear growth condition,
    which follows straightforwardly from the Lipschitz continuity of
    $f$. For more details, we refer for example to \cite[Lemma 2.1]{cboanalysis} for a similar calculation which can be adapted up to minor modifications to the present setting.
    In the rest of the proof we construct a
    Lyapunov function $V$ and show that \eqref{eq:conditions} holds.
  
  Define
  \[V( x) = \frac{1}{2J}\sum_{j=1}^J 
    | x^{j}|^2\qquad\text{for
    } x=( x^{j})_{j=1,\dots,J}\in\R^{J\cdot d}.\]
  Condition \eqref{eq:limiting_condition} is clearly satisfied for
    this $V$,
    so that it only remains to show \eqref{eq:generator_condition}.
  We compute the generator and derive the following upper bound using
  Cauchy-Schwarz inequality
\begin{align*}
\LV( x) &= \frac{1}{J}\sum_{j=1}^J \langle  x^{j}, \M{\rho^J}- x^{j}\rangle + \frac{1}{2J}\sum_{j=1}^J 2\lambda^{-1}\Tr{\C{\rho^J}} \\
				&\le -\frac1J\sum_{j=1}^J | x^{j}|^2 + \frac1J\sum_{j=1}^J| \M{\rho^J}|| x^{j}|+ \frac1{J\lambda}\sum_{j=1}^J\Tr{\C{\rho^J}}.
\end{align*}
By Lemma~\ref{lem:estimate_weightedcov}, we have
\begin{equation*}
\frac1J\sum_{j=1}^J\Tr{\C{\rho^J}} \leq \sw{b_1+b_2\frac1J\sum_{i=1}^J| x^j|^2} \, 
\end{equation*}
and using Lemma~\ref{lem:estimate_weightedmean1}, we similarly obtain
\[|\mathcal M_{\beta}(\rho^J)|^2\le b_1 + b_2 \frac1J\sum_{i=1}^J | x^{i}|^2\,.\]
By Young's inequality we deduce that
\begin{align} \frac1J\sum_{j=1}^J|\mathcal M_\beta(\rho^J)|| x^{j}| &\le \frac1J\sum_{j=1}^J \left( \frac{|\mathcal M_\beta(\rho^J)|^2}2 +\frac{| x^{j}|^2}2\right)\le \frac{b_1}{2} + \frac{1+b_2}{2J}\sum_{j=1}^J| x^{j}|^2. 
\end{align}
Hence, there exist constants $c_1$, $c_2>0$
such that $\LV( x) \le c_1 + c_2 V( x)$
and thus $V_{c_1}( x) = c_1+V( x)$ gives the desired Lyapunov
function. In all this shows that the assumptions of \cite[Thm.\
3.5]{khasminskii} are satisfied. An application of this theorem
then implies the claim.
\end{proof}

\begin{lemma}\label{lem:auxiliary_moments}
  Let $p$, $q\ge 0$, let $\mu$ be a probability measure on $\R^d$, and
  let $f\in \L{p+q}(\R^d,\mu)$. Then
  \begin{equation*}
    \int_{\R^d}|f(x)|^p\,\mathrm{d}\mu(x)
    \int_{\R^d}|f(y)|^q\,\mathrm{d}\mu(y)
    \le
    \int_{\R^d}|f(x)|^{p+q}\,\mathrm{d}\mu(x).
  \end{equation*}
\end{lemma}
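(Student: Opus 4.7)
The plan is to reduce the inequality to a standard application of Hölder's inequality (equivalently, Jensen's inequality applied to the convex power function). The key observation is that the exponents $p$ and $q$ sum to $p+q$, which immediately suggests conjugate exponents $r=(p+q)/p$ and $s=(p+q)/q$, since then $1/r+1/s=1$.

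First, I would dispose of the degenerate cases $p=0$ or $q=0$: in either case the corresponding factor on the left equals $\int_{\R^d} 1\,\mathrm{d}\mu = 1$ (using that $\mu$ is a probability measure), and the inequality becomes the trivial identity. Assume therefore $p,q>0$. Applying Hölder's inequality to $|f|^p\cdot 1$ with exponents $r=(p+q)/p$ and its conjugate yields
\begin{equation*}
\int_{\R^d}|f(x)|^p\,\mathrm{d}\mu(x) \le \left(\int_{\R^d}|f(x)|^{p+q}\,\mathrm{d}\mu(x)\right)^{p/(p+q)},
\end{equation*}
and analogously
\begin{equation*}
\int_{\R^d}|f(y)|^q\,\mathrm{d}\mu(y) \le \left(\int_{\R^d}|f(y)|^{p+q}\,\mathrm{d}\mu(y)\right)^{q/(p+q)}.
\end{equation*}
Here I am using that $\mu$ is a probability measure so the factor $(\int 1\,\mathrm{d}\mu)^{1/s}$ arising on the right-hand side equals one.

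Multiplying the two bounds and noting that the exponents add to $p/(p+q)+q/(p+q)=1$, the right-hand side collapses to $\int_{\R^d}|f|^{p+q}\,\mathrm{d}\mu$, which is exactly the claim. There is no real obstacle here; the only thing to be careful about is that the hypothesis $f\in L^{p+q}(\R^d,\mu)$ (together with $\mu$ being a probability measure) guarantees via Jensen's inequality that $f\in L^p(\mu)\cap L^q(\mu)$ as well, so all integrals appearing in the estimate are finite and the Hölder step is legitimate.
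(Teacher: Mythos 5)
Your proof is correct and rests on the same core idea as the paper's (Hölder's inequality on a probability space), though your symmetric application of Hölder with conjugate exponents $(p+q)/p$ and $(p+q)/q$ to each factor separately is a small variant of the paper's asymmetric two-step argument (WLOG $p\le q$, Hölder to bound the $p$-integral by a power of the $q$-integral, then Jensen to pass to the $(p+q)$-integral). Your version is slightly cleaner in that it avoids the WLOG and handles both factors by the same step.
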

\begin{proof}
  Without loss of generality $p\le q$. For $p=0$ the statement is
  trivial, so let $p>0$. Then $q/p\ge 1$ and by H\"older's inequality
  \begin{equation*}
    \int_{\R^d}|f(x)|^p\,\mathrm{d}\mu(x)
    \le
    \left(\int_{\R^d}|f(x)|^{p\frac{q}{p}}\,\mathrm{d}\mu(x)\right)^{\frac{p}{q}}
    =\left(\int_{\R^d}|f(x)|^{q}\,\mathrm{d}\mu(x)\right)^{\frac{p}{q}}.
  \end{equation*}
  This estimate together with Jensen's inequality yields
  \begin{equation*}
\int_{\R^d}|f(x)|^p\,\mathrm{d}\mu(x)
    \int_{\R^d}|f(y)|^q\,\mathrm{d}\mu(y)    
    \le \left(\int_{\R^d}|f(x)|^q\,\mathrm{d}\mu(x) \right)^{\frac{q+p}{q}}
    \le \int_{\R^d}|f(x)|^{q+p}\,\mathrm{d}\mu(x),
  \end{equation*}
  which gives the claim.
\end{proof}

\begin{proof}[Proof of Lemma~\ref{lem:momest}]
  Set
    \begin{equation}
        g:=\begin{cases} \R \times \R^d \to \R\\
          (t, x) \mapsto 
          | x|^{2p}.
        \end{cases}
    \end{equation}
    For $ x=( x_k)_{k=1}^d\in\R^d$ we have
    \begin{align}
        \frac{\partial g}{\partial t}(t, { x}) &= 0, \\
      \frac{\partial g}{\partial \jz{ x_k}}(t, { x}) &= 2p x_k|x|^{2(p-1)},\\
      \frac{\partial^2 g}{\partial \jz{ x_k}^2}(t,  x) &= 2p| x|^{2(p-1)} + \mk{4}p(p-1)| x|^{2\,\sw{\max(0,p-2)}}\jz{ x_k^2}.
    \end{align}

    Now let $\theta_t^j\in\R^d$, $j=1,\dots,J$, be the solution of
      \eqref{eq:CBS_SDE}. Applying the It\^{o} formula to $g$ yields
    \begin{align}
        \mathrm{d}|\theta^j_t|^{2p} &= 2p|\theta^j_t|^{2(p-1)}\langle \theta_t^j,\mathrm{d}\theta^j_t \rangle  + p|\theta^j_t|^{2(p-1)}\langle \mathrm{d}\theta_t^j,\mathrm{d}\theta^j_t \rangle + 2p(p-1)|\theta^j_t|^{2(p-2)}\langle\theta_t^j,\mathrm{d}\theta^j_t\rangle \\
        &= 2p|\theta^j_t|^{2(p-1)}\i{\theta^j_t}{-(\theta^j_t-\M{\rho^J_t})\mathrm{d}t+\sqrt{2\Li\C{\rho^J_t}}\mathrm{d}W_t} \\
        &\quad + p|\theta^j_t|^{2(p-1)}\i{-(\theta^j_t-\M{\rho^J_t})\mathrm{d}t+\sqrt{2\Li\C{\rho^J_t}}\mathrm{d}W_t}{-(\theta^j_t-\M{\rho^J_t})\mathrm{d}t+\sqrt{2\Li\C{\rho^J_t}}\mathrm{d}W_t} \\
        &\quad + \mk{4}p(p-1)|\theta^j_t|^{2\max(0,p-2)} \i{\theta^j_t}{-(\theta^j_t-\M{\rho^J_t})\mathrm{d}t+\sqrt{2\Li\C{\rho^J_t}}\mathrm{d}W_t}\,.
    \end{align}
    Integrating over time and applying the expectation gives
    \begin{align}
        \E{|\theta^j_t|^{2p}} &= \bbE\Bigg[|\theta_0^j|^{2p} -\int_0^t2p|\theta^j_s|^{2(p-1)}\langle\theta^j_s,\theta^j_s-\M{\rho^J_s}\rangle \mathrm{d}s\\
        &\quad+ \int_0^t2p|\theta^j_s|^{2(p-1)}\langle \theta^j_s,\sqrt{2\Li\C{\rho_s^J}}\rangle \mathrm{d}W_s + \int_0^t 2 p|\theta^j_s|^{2(p-1)}\Li \Big\|\sqrt{\C{\rho^J_s}}\Big\|_F^2 \mathrm{d}s \\
        &\quad+ \mk{4}p(p-1)\Big(\int_0^t|\theta_s^j|^{2\max(0,p-2)}\langle \theta^j_s,\theta^j_s-\M{\rho^J_s}\rangle \mathrm{d}s+\int_0^t|\theta_s^j|^{2\max(0,p-2)}\langle\theta_s^j,\sqrt{2\Li\C{\rho^J_t}}\rangle \mathrm{d}W_s\Big)\Bigg].
    \end{align}
    Using the linearity of the expectation and the integral and the fact that the third and last integrands are martingales, we have
    \begin{align}
        \E{|\theta^j_t|^{2p}} &= \E{|\theta_0^j|^{2p}} - 2p\int_0^t\Ec{|\theta^j_s|^{2(p-1)}\langle\theta^j_s,\theta^j_s-\M{\rho^J_s}\rangle} \mathrm{d}s
        +2p\Li\int_0^t\Ec{|\theta^j_s|^{2(p-1)} \Big\|\sqrt{\C{\rho^J_s}}\Big\|_F^2}\mathrm{d}s \\
        &\quad+ 2p(p-1)\int_0^t\Ec{|\theta_s^j|^{2\max(0,p-2)}\langle \theta^j_s,\theta^j_s-\M{\rho^J_s}\rangle}\mathrm{d}s  \\
        &\leq  \E{|\theta_0^j|^{2p}}  +  \mk{4p^2}\int_0^t\Ec{(|\theta^j_s|^{2(p-1)}\mk{+|\theta^j_s|^{2\max(0,p-2)})}(|\theta^j_s|^2\jz{+|\theta^j_s-\M{\rho^J_s}|^2)}} \mathrm{d}s\\
        &\quad+2p\Li\int_0^t\Ec{|\theta^j_s|^{2(p-1)}\Big\|\sqrt{\C{\rho^J_s}}\Big\|_F^2}\mathrm{d}s\,.
    \end{align}
    Next, we sum over the particles and divide by $J$, after which we apply the inequality $|a+b|^2\le 2|a|^2+2|b|^2$ to obtain 
    \begin{align}
        \Ec{\int_{\R^d}| x|^{2p}\mathrm{d}\rho^J_t( x)}&\leq \Ec{\int_{\R^d}| x|^{2p}\mathrm{d}\rho_0^{\otimes J}( x)} \\
        &\quad + \mk{4}p^2\int_0^t\Ec{\int_{\R^d}(| x|^{2(p-1)}\mk{+ | x|^{2\max(0,p-2)})}(\mk{3| x|^2}+2|\M{\rho^J_s}|^2)\mathrm{d}\rho^J_s( x)} \mathrm{d}s
        \\ 
        &\quad +2p\Li\int_0^t\Ec{\int_{\R^d}| x|^{2(p-1)}\Big\|\sqrt{\C{\rho^J_s}}\Big\|^2_F\mathrm{d}\rho^J_s( x)}\mathrm{d}s \\
    \end{align}
    We now apply
    Lemma~\ref{lem:estimate_weightedmean1} and
    Lemma~\ref{lem:estimate_weightedcov}, which yields
    \begin{align}
        \Ec{\int_{\R^d}| x|^{2p}\mathrm{d}\rho^J_t( x)} &\leq \Ec{\int_{\R^d}| x|^{2p}\mathrm{d}\rho_0^{\otimes J}( x)} + \mk{4}p^2\int_0^t3\Ec {\int_{\R^d}| x|^{2p}\mathrm{d}\rho_s^J( x)} \\
        &\quad+ \bbE\Bigg[\int_{\R^d}| x|^{2(p-1)}\Big(2b_1\mk{+3} +2b_2\int_{\R^d}|\eta|^2 \mathrm{d}\rho^J_s(\eta)\Big)\mathrm{d}\rho^J_s( x)\Bigg]\\
        &\quad+\mk{ \Ec{\int_{\R^d}| x|^{2\max(0,p-2)}\Big(2b_1+2b_2\int_{\R^d}|\eta|^2\mathrm{d}\rho_s^J(\eta)\Big)\mathrm{d}\rho^J_s( x)} } \mathrm{d}s \\
        &\quad+ 2^{2p+1}p^2\Li\int_0^t\Ec{\int_{\R^d}| x|^{2(p-1)}\Big(b_1+b_2\int_{\R^d}|\eta|^2 \mathrm{d}\rho^J_s(\eta) \Big)\jz{\mathrm{d}\rho^J_s( x)}}\mathrm{d}s \\
        &\leq \Ec{\int_{\R^d}| x|^{2p}\mathrm{d}\rho_0^{\otimes J}( x)} + (\mk{4}p^2(3+2b_2)+2^{2p+\mk{2}}p^2\Li b_2)
        \int_0^t\Ec{\int_{\R^d}| x|^{2p}\mathrm{d}\rho_s^J( x)}\mathrm{d}s \\
        &\quad + (\mk{4}p^2(2b_1+3+2b_2)+2^{2p+\mk{2}}p^2\Li b_1)
        \int_0^t\Ec{\int_{\R^d}| x|^{2(p-1)}\mathrm{d}\rho^J_s( x)}\mathrm{d}s \\
        &\quad + (\mk{4}p^2(2b_1))\int_0^t\Ec{\int_{\R^d}| x|^{2\max(0,p-2)}\mathrm{d}\rho^J_s( x)}\mathrm{d}s \\
        &=: \Ec{\int_{\R^d}| x|^{2p}\mathrm{d}\rho_0^{\otimes J}( x)} + C\int_0^t\Ec{\int_{\R^d}| x|^{2p}\mathrm{d}\rho^J_s}\mathrm{d}s + \tilde C \int_0^t\Ec{\int_{\R^d}| x|^{2(p-1)}\mathrm{d}\rho^J_s( x)}\mathrm{d}s \\
        &\quad + \mk{C' \int_0^t\Ec{\int_{\R^d}| x|^{2\max(0,p-2)}\mathrm{d}\rho^J_s( x)}\mathrm{d}s}.
    \end{align}
    Here we used 
      $\int | x|^{2(p-1)} \mathrm{d}\rho( x)\int |\eta|^{2}
      \mathrm{d}\rho(\eta)\le \int | x|^{2p} \mathrm{d}\rho( x)$ \sw{which follows by Lemma~\ref{lem:auxiliary_moments} }.
      Continuing with our estimates, for $p=1$, we arrive at
    \begin{align}
        \Ec{\int_{\R^d}| x|^{2}\mathrm{d}\rho^J_t( x)} &\leq \Ec{\int_{\R^d}| x|^{2}\mathrm{d}\rho^J_0( x)} + C \int_0^t\Ec{\int_{\R^d}| x|^{2}\mathrm{d}\rho^J_s( x)}\, \mathrm{d}s + \mk{(\tilde C+C')} t.
    \end{align}
    Applying Grönwall's inequality gives the upper bound
    \begin{equation}
        \Ec{\int_{\R^d}| x|^{2}\mathrm{d}\rho^J_t( x)} \leq \left(\Ec{\int_{\R^d}| x|^{2}\mathrm{d}\rho_0^{J}( x)} + \mk{(\tilde C +C')}T\right)\exp{(CT)}\, < \infty.
    \end{equation}
    Similarly, for $p=2$, using the above estimate we obtain
    \begin{align}
        \Ec{\int_{\R^d}| x|^{4}\mathrm{d}\rho^J_t( x)} &\leq \Ec{\int_{\R^d}| x|^{4}\mathrm{d}\rho^J_0( x)} + C \int_0^t\Ec{\int_{\R^d}| x|^{4}\mathrm{d}\rho^J_s( x)}\, \mathrm{d}s + \tilde C t \sup_{t\in[0,T]}\Ec{\int_{\R^d}| x|^{2}\mathrm{d}\rho^J_t( x)} + \mk{C't}\\
        &\leq  \Ec{\int_{\R^d}| x|^{4}\mathrm{d}\rho^J_0( x)} + C \int_0^t\Ec{\int_{\R^d}| x|^{4}\mathrm{d}\rho^J_s( x)}\, \mathrm{d}s \\
        &\quad+ \tilde C T \left(\Ec{\int_{\R^d}| x|^{2}\mathrm{d}\rho_0^{J}( x)} + \mk{(\tilde C +C')} T\right)\exp{(CT)} + \mk{C'T},
    \end{align}
    such that we can derive an upper bound by applying Grönwall's inequality once more
    \begin{align}
        \Ec{\int_{\R^d}| x|^{4}\mathrm{d}\rho^J_t( x)} &\leq \Bigg( \Ec{\int_{\R^d}| x|^{4}\mathrm{d}\rho^J_0( x)} + \tilde C T \Bigg(\Ec{\int_{\R^d}| x|^{2}\mathrm{d}\rho_0^{J}( x)} \\
        &\quad + \mk{(\tilde C +C')} T\Bigg)\exp{(CT)}+\mk{C'T} \Bigg) \exp{(CT)}\, < \infty.
    \end{align}
    Using the same procedure, one may obtain a bound for the sixth
    moment.
\end{proof}

\section{Proofs of Section~\ref{sec:mfl}}
\begin{proof}[Proof of Lemma~\ref{lem:tight}]
  The proof is a straightforward adaptation of the argument in \cite[Proof of Theorem~3.3]{mfpso}, and we refer to this paper for a more detailed exposition. In particular, according to Theorem \ref{thm:chaos} it sufficies to prove the tightness of $(\mathbb P^{\theta^{1,J}})_{J\in\mathbb N}$ due to the particles'
  exchangeability. 
  We prove tightness of the solution
    $\{\theta^{1,J}\}_{J\in\mathbb{N}}$ to \eqref{eq:CBS_SDE} on
    $C([0,T];\R^d)$, using the two criteria by Aldous stated in
    Theorem~\ref{thm:aldous} in Appendix \ref{app:theorems}:

{\bf 1.} Let $\epsilon >0$, then  by Markov's inequality
    \begin{equation}
      \Pc{\mtp{\theta_t^{1,J}}>\frac{K_p(T)}{\epsilon}}
                                                            \le
                                                            \frac{\epsilon}{K_p(T)}\E{\mtp{\theta_t^{1,J}}} \leq \epsilon, \quad \forall J\in\mathbb{N},
    \end{equation}
    where $K_p(T)$ is the uniform bound on the moments from Lemma \ref{lem:momest} and $t\in[0,T]$.
    
{\bf 2.}  Let $\tau$ be a discrete $\sigma(\theta^1_s,s\in[0,T])$-stopping time with $\tau+\delta\leq T$. \jz{By \eqref{eq:CBS_SDE}} we have
    \begin{align}
        \E{\mt{\theta_{\tau+\delta}^{1,J}-\theta_\tau^{1,J}}} &= \Ec{\Big|-\int_\tau^{\tau+\delta} \theta^{1,J}_s-\M{\rho_s^J}\mathrm{d}s + \int_\tau^{\tau+\delta}\sqrt{2\Li\C{\rho_s^J}}\mathrm{d}W^{1}_s\Big|^2}\\
        &\leq {2}\Ec{\Big|\int_\tau^{\tau+\delta} \theta^{1,J}_s-\M{\rho_s^J}\mathrm{d}s\Big|^2} + {2}\Ec{\Big|\int_\tau^{\tau+\delta}\sqrt{2\Li\C{\rho_s^J}}\mathrm{d}W^{1}_s\Big|^2}
    \end{align}
    We can bound the first term via our moment estimate. Using Jensen's
    inequality
    \begin{align}
        \Ec{\Big|\int_\tau^{\tau+\delta} \M{\rho_s^J}-\theta^{1,J}_s \mathrm{d}s\Big|^2}&\le
          \sw{\delta}\int_\tau^{\tau+\delta}\E{\mt{\M{\rho_s^J}-\theta^{1,J}_s}}\mathrm{d}s \\
        &\leq 2\sw{\delta^2}  \Big(\supi{t\in[0,T]}\E{\mt{\M{\Rh}}}+\supi{t\in[0,T]}\E{\mt{\theta^{1,J}_t}}\Big) \\
        &\le 2\sw{\delta^2} K_1(T),
    \end{align}
    where for the last inequality we used Lemma \ref{lem:momest}
      and Lemma \ref{lem:estimate_weightedmean1}.
    For the second moment of the covariance of the reweighted measure we similarly obtain
    \begin{align}
      \Ec{\Big|\int_\tau^{\tau+\delta}\sqrt{\Li\C{\rho_s^{J}}} \mathrm{d}W_s\Big|^2} &=\Ec{\int_\tau^{\tau+\delta}\Big\|\sqrt{\Li\C{\rho_s^{J}}}\Big\|^2_F \mathrm{d}s} \\
        &\le \Li\int_\tau^{\tau+\delta}  b_1+b_2\Ec{\int_{\R^d}|x|^{2}\mathrm{d}\rho_s^J(x)}\mathrm{d}s \\
        &\leq \Li \delta\Bigg( b_1+b_2\supi{t\in[0,T]}\Ec{\int_{\R^d}|x|^{2}\mathrm{d}\rho_t^J(x)}\Bigg) \\
        &\leq \Li\delta (b_1 + b_2K_1(T)),
    \end{align}
    where in the first equality we used the It\^{o} Isometry, the
      first inequality uses Fubini's theorem and Lemma
      \ref{lem:estimate_weightedcov}, and the final inequality follows
      by Lemma \ref{lem:momest}.  
      In all, using \sw{$\delta\le T$} and
    Jensen's
    inequality 
    once more,
    \begin{align}
            \E{|\theta_{\tau+\delta}^{1,J}-\theta_\tau^{1,J}|} &\leq \sqrt{2\delta \cdot \max(\sw{T}K_1(T),\Li (b_1+b_2K_1(T)))} \\
            &=: \sqrt{\delta}C\,,
    \end{align}
    with $C>0$ depending on $T$ but independent of $\delta$.
    To 
    obtain
    the desired inequality \eqref{eq:aldous2}
    for given $\epsilon$, $\eta>0$,
    we define
    \begin{equation}
        \delta_0 := \min\Big(\frac{\epsilon\eta}{C^2},T\Big)
    \end{equation}
    and apply Markov's inequality to obtain
    \begin{equation}
      \underset{\delta\in[0,\delta_0]}{\sup}\mathbb{P}(|\theta^{1,J}_{\tau+\delta}-\theta^{1,J}_\tau|>\eta) \le
        \underset{\delta\in[0,\delta_0]}{\sup}\frac{1}{\eta}\E{|\theta^{1,J}_{\tau+\delta}-\theta^{1,J}_\tau|} \leq \epsilon,
    \end{equation}
    which concludes the proof.
\end{proof}

\begin{proof}[Proof of Lemma~\ref{lem:funcconvzero}]
  Plugging $\rho^J$ into $\mathbb{F}_{\varphi,t}$ defined in
    \eqref{eq:Fvarphi} we 
    get
    \begin{align}
        \label{eq:functionalempirical}
        \F{\rho^J} &= \sumj \varphi(\tjj_t) - \sumj \varphi(\tjj_0) + \int_0^t\sumj(\tjj_s-\M{\rho^J_s})\nabla\varphi(\tjj_s) \mathrm{d}s \\
        &\quad- \Li\int_0^t\sumj \sumi{i=1}{d} \sumi{k=1}{d} \C{\rho_s^J}_{ik}\frac{\partial^2}{\partial x_i\partial x_k} \varphi(\tjj_s) \mathrm{d}s \\
        &=  \sumj \Bigg( \varphi(\tjj_t) -  \varphi(\tjj_0)+ \int_0^t(\tjj_s-\M{\rho^J_s})\nabla\varphi(\tjj_s) \mathrm{d}s \\
        &\quad- \Li \int_0^t \sumi{i=1}{d} \sumi{k=1}{d} \C{\rho_s^J}_{ik}\frac{\partial^2}{\partial x_i \partial x_k} \varphi(\tjj_s) ds \Bigg).\label{eq:F_rho_j}
    \end{align}
    Since $\varphi\in{C}_c^2(\R^d)$, we may apply It\^o's formula
    to the solution $\tjj_t$ of \eqref{eq:CBS_SDE}, which yields   
    \begin{align}
        \varphi(\tjj_t) &= \varphi(\tjj_0) - \int_0^t(\tjj_s-\M{\rho^J_s})\nabla\varphi(\tjj_s) \mathrm{d}s + \int_0^t\sqrt{2\Li\C{\rho_s^J}}\nabla\varphi(\tjj_s) \mathrm{d}W^j_s \\
        &\quad+ \Li \int_0^t \sumi{i=1}{d}\sumi{k=1}{d} \C{\rho_s^J}_{ij}\frac{\partial^2}{\partial x_i\partial x_k} \varphi(\tjj_s) \mathrm{d}s.
    \end{align}
    Inserting this into \eqref{eq:F_rho_j} leads to 
    \begin{align}
        \F{\rho^J} = \sumj \int_0^t\sqrt{2\Li\C{\rho_s^J}}\nabla\varphi(\tjj_s) \mathrm{d}W^j_s\,.
    \end{align}
    For the second moment, we obtain
    \begin{align}
        \E{|\F{\rho^J}|^2} &= \frac{2}{\lambda J^2} \Ec{\Big|\sumi{j=1}{J} \int_0^t\sqrt{
        \C{\rho_s^J}
        }\nabla\varphi(\tjj_s) \mathrm{d}W_s^j\Big|^2} \\
        &= \frac{2}{\lambda J^2} \Ec{\sumi{i=1}{d}\Bigg(\sumi{j=1}{J}\int_0^t\sqrt{\C{\rho^J_s}}\nabla\varphi(\tjj_s) \mathrm{d}W^j_s\Bigg)_i^2} \\
        &\via{}{=} \frac{2}{\lambda J^2} \sumi{i=1}{d} \Ec{\Bigg(\sumi{j=1}{J}\sumi{k=1}{J}\Big(\int_0^t\sqrt{\C{\rho^J_s}}\nabla\varphi(\tjj_s) \mathrm{d}W^j_s\Big)\Big(\int_0^t\sqrt{\C{\rho^J_s}}\nabla\varphi(\theta^{k,J}_s) \mathrm{d}W^k_s\Big)\Bigg)_i} \\
        &\via{}{=} \frac{2}{\lambda J^2} \sumi{i=1}{d} \sumi{j=1}{J}\Ec{\int_0^t\nabla\varphi(\tjj_s)^\top\C{\rho^J_s}\nabla\varphi(\tjj_s) \mathrm{d}s} \\
        &\leq C(\lambda, K_p(T), T, \norm[L^\infty]{\nabla\varphi}\frac{1}{J} \overset{J\to\infty}{\to} 0
    \end{align}
    where for the final inequality we used
    Lemma \ref{lem:momest} and Lemma \ref{lem:estimate_weightedcov},
    and the 
    equality in the second to last line
    holds due to It\^{o}'s Isometry and the fact that $\E{\int_0^tg(t, \theta_t)d(B_s^j\otimes B_s^k)}=0$ for $j\neq k$ and $g\in\jz{\L{2}}$;
    this holds 
    due to $\varphi\in C_c^2(\R^d)$ and Lemma \ref{lem:estimate_weightedcov}.
    We were able to apply Lemma \ref{lem:momest} due to our assumption about the initialisation.
    \end{proof}

\begin{proof}[Proof of Lemma~\ref{lem:funcconvlimit}]
  \sw{Firstly, we observe that} \mk{for every $t\in[0,T], \{\rho^J_t\}_{J\in\mathbb N}$ converges\footnote{remember that we identify the subsequence $\{\rho^{J_k}\}_{k\in\mathbb N}$ as $\{\rho^J\}_{J\in\mathbb N}$.} $\tilde{\mathbb P}_t$-almost surely (on $(\tilde \Omega_t,\tilde{\mathcal F}_t,\tilde{\mathbb P}_t)$) to $\rho_t\in\mathcal{P}(\R^{D})$.}
  
  It suffices to check the $\L{1}$-convergence of the
  terms of $\F{\overline\rho^J}$ in \eqref{eq:F_rho_j} one by one.
    The difference to \cite[Theorem 3.3]{huangcbo} lies in the diffusion term, which we split as
    \begin{align}
        &\Bigg|\int_0^t\Li\sumi{i=1}{d}\sumi{k=1}{d} \int_{\R^d}\C{\rho_s^J}_{ik}\frac{\partial^2}{\partial x_i\partial x_k} \varphi( x) \mathrm{d}\rho_s^J( x)\mathrm{d}s-\int_0^t\Li\sumi{i=1}{d}\sumi{k=1}{d}\int_{\R^d} \C{\rho_s}_{ik}\frac{\partial^2}{\partial x_i\partial x_k} \varphi( x) \mathrm{d}\rho_s( x) \mathrm{d}s\Bigg| \\
        &\qquad\leq \Li \Bigg(\int_0^t\Big|\sumi{i=1}{d}\sumi{k=1}{d} \int_{\R^d}\C{\rho_s^J}_{ik}\frac{\partial^2}{\partial x_i\partial x_k} \varphi( x) \mathrm{d}(\rho_s^J( x)-\rho_s( x))\Big|\mathrm{d}s \\
        &\qquad\quad+ \int_0^t\Big|\sumi{i=1}{d}\sumi{k=1}{d} \int_{\R^d}(\C{\rho^J_s}_{ik}-\C{\rho_s}_{ik})\frac{\partial^2}{\partial x_i\partial x_k} \varphi( x) \mathrm{d}\rho_s( x)\Big|\mathrm{d}s\Bigg) \\
        \label{eq:bothintegrands}
         &\qquad=: \Li\left(\int_0^t|I_1^J(s)|\mathrm{d}s + \int_0^t|I_2^J(s)|\mathrm{d}s\right),
    \end{align}
    where we used the triangle inequality and Jensen's inequality.
    The expectation of the first integrand vanishes, since $\varphi$
    vanishes at the boundaries: using Jensen's inequality, Lemma
    \ref{lem:estimate_weightedcov} and $K_1(T)$ from Lemma \ref{lem:momest}
    \begin{align}
      \E{|I^J_1(s)|} &\le \sumi{i=1}{d}\sumi{k=1}{d}\Ec{\Big|\int_{\R^d}\C{\rho_s^J}_{ik} \frac{\partial^2}{\partial x_i\partial x_k} \varphi( x) \mathrm{d}(\rho_s^J( x)-\rho_s( x))\Big|} \\
        &\leq \left(K_1(T)^{\frac{1}{2}}\sumi{i=1}{d}\sumi{k=1}{d}\Ec{\Big|\int_{\R^d}\frac{\partial^2}{\partial x_i\partial x_k}\varphi( x)\mathrm{d}(\rho_s^J( x)-\rho_s( x))\Big|}\right)\to 0\qquad\text{as }J\to\infty.\label{eq:integrandone}
    \end{align}
    Moreover, for the second integrand, we claim that for all $s\in[0,T]$
    \begin{align}\label{eq:integrandtwo}
        \E{|I^J_2(s)|} &= \Ec{\Bigg|\sumi{i=1}{d}\sumi{k=1}{d}\int_{\R^d}(\C{\rho^J_s}_{ik}-\C{\rho_s}_{ik})
        \frac{\partial^2}{\partial x_i\partial x_k} \varphi( x)\mathrm{d}\rho_s( x) \Bigg|}\nonumber \\
        &\leq \sup_{ x\in\R^d}\norm[F]{\nabla^2 \varphi( x)}\cdot\E{\|\C{\rho^J_s}-\C{\rho_s}\|_F}
        \to 0\qquad\text{as }J\to\infty.
    \end{align}
    To show this
      claim, we first note that there holds the
      \sw{$\tilde{\mathbb P}_s$-almost sure} convergence
    \begin{equation}
        \underset{J\to\infty}{\lim} \|\C{\rho^J_s}-\C{\rho_s}\|_F = 0, 
      \end{equation}
      for each fixed $s$ because $ x\exp({-\beta
        f( x)})$ and $\exp{(-\beta f( x))}$ belong to
      ${C}_b(\R^d)$. 
        \sw{More precisely}, together with  
        \sw{$\tilde{\mathbb P}_t$-almost sure} convergence
      of the $\rho^J_t$ towards
        $\rho_t$, 
        this implies
    \begin{align}
        \underset{J\to\infty}{\lim} \C{\rho_t^J} &= \underset{J\to\infty}{\lim} \int_{\R^d}( x-\mathcal{M}_\beta(L_\beta\rho^J_t))\otimes( x-\mathcal{M}_\beta(L_\beta\rho_t^J)\,\mathrm{d}L_\beta \rho_t^J( x) \\
        &= \underset{J\to\infty}{\lim} \int_{\R^d}\left( x-\frac{\i{ x\exp({-\beta f( x))}}{\mathrm{d}\rho_t^J( x)}}{\i{\exp{(-\beta f( x))}}{\mathrm{d}\rho_t^J( x)}}\right)\otimes\left( x-\frac{\i{ x\exp({-\beta f( x))}}{\mathrm{d}\rho_t^J( x)}}{\i{\exp{(-\beta f( x))}}{\mathrm{d}\rho_t^J( x)}}\right)\\
      &\quad\cdot \frac{\exp{(-\beta f( x))}}{{\i{\exp{(-\beta f( x))}}{\mathrm{d}\rho_t^J( x)}}}\,\mathrm{d}\rho_t^J( x) \\
        &= \int_{\R^d}\left( x-\frac{\i{ x\exp({-\beta f( x))}}{\mathrm{d}\rho_t( x)}}{\i{\exp{(-\beta f( x))}}{\mathrm{d}\rho_t( x)}}\right)\otimes\left( x-\frac{\i{ x\exp({-\beta f( x))}}{\mathrm{d}\rho_t( x)}}{\i{\exp{(-\beta f( x))}}{\mathrm{d}\rho_t( x)}}\right)\\
      &\quad\cdot\frac{\exp{(-\beta f( x))}}{{\i{\exp{(-\beta f( x))}}{\mathrm{d}\rho_t( x)}}}\,\mathrm{d}\rho_t( x) \\
        &= \int_{\R^d}( x-\mathcal{M}(L_\beta\rho_t))\otimes( x-\mathcal{M}(L_\beta\rho_t)\,\mathrm{d}L_\beta \rho_t( x) \\
        \label{eq:covariance}
        &= \C{\rho_t}\,,
    \end{align}
    \sw{$\tilde{\mathbb P}_t$-almost surely}. Here we wrote integrals over $\R^d$ with respect to a measure $\mu$ as $\i{\cdot}{\mathrm{d}\mu( x)}$ for better legibility. \mk{Importantly, we were able to use weak convergence of the time marginals $\rho^J_t \to \rho_t$ according to \cite[Lemma 2.3 (2)]{huangcbo}.}
    \sw{We apply Lemma~\ref{lem:estimate_weightedcov} to verify that 
    $\lim_{J\to\infty} \E{\|\C{\rho^J_s}-\C{\rho_s}\|_F} = 0$, which can be seen as follows. Let $A>0$ be arbitrary, and consider
    \begin{align*}
        &\E{ \|\C{\rho^J_s}-\C{\rho_s}\|_F }\\ &= \E{ \|\C{\rho^J_s}-\C{\rho_s}\|_F \mathds{1}_{\|\C{\rho^J_s}-\C{\rho_s}\|_F \le A} } + \E{ \|\C{\rho^J_s}-\C{\rho_s}\|_F \mathds{1}_{\|\C{\rho^J_s}-\C{\rho_s}\|_F>A} }\\
        & \le \E{ \|\C{\rho^J_s}-\C{\rho_s}\|_F \mathds{1}_{\|\C{\rho^J_s}-\C{\rho_s}\|_F \le A} } + \E{ \|\C{\rho^J_s}-\C{\rho_s}\|_F^2 }^{\frac12} \tilde{\mathbb{P}}(\|\C{\rho^J_s}-\C{\rho_s}\|_F>A)^{\frac12} \\
        &\le \E{ \|\C{\rho^J_s}-\C{\rho_s}\|_F \mathds{1}_{\|\C{\rho^J_s}-\C{\rho_s}\|_F \le A} } + \frac{\E{ \|\C{\rho^J_s}-\C{\rho_s}\|_F^2 }}{A^2}\,,
    \end{align*}
    where we have first applied Hölder's inequality followed by Markov's inequality. Note that by Lemma~\ref{lem:estimate_weightedcov} and Lemma~\ref{lem:momest},
    \[ \E{ \|\C{\rho^J_s}-\C{\rho_s}\|_F^2} \le 2 \E{ \|\C{\rho^J_s}\|_F^2} + 2 \E{ \|\C{\rho_s}\|_F^2} \le K_2(T)<\infty\]
    uniformly in $J$. Taking the limit $J\to\infty$ yields
    \[0\le \limsup_{J\to\infty}\E{ \|\C{\rho^J_s}-\C{\rho_s}\|_F } \le \frac{K_2(T)}{A^2}\, ,  \]
    and since $A>0$ is arbitrary, we also obtain $\lim_{J\to\infty} \E{\|\C{\rho^J_s}-\C{\rho_s}\|_F} = 0$.
    }
    \sw{In summary, we verified the convergence of the diffusion term}
    \begin{align}\label{eq:summand3}
      \underset{J\to\infty}{\lim}& \mathbb{E}\Bigg[\Li \Big|\int_0^t\sumi{i=1}{d}\sumi{k=1}{d} \int_{\R^d}\C{\rho_s^J}_{ik}\frac{\partial^2}{\partial x_i\partial x_k} \varphi( x)\mathrm{d}\rho_s^J( x)\mathrm{d}s\\
        &\qquad -\int_0^t\sumi{i=1}{d}\sumi{k=1}{d} \int_{\R^d}\C{\rho_s}_{ik}\frac{\partial^2}{\partial x_i\partial x_k} \varphi( x) \mathrm{d}\rho_s( x) \mathrm{d}s\Big|\Bigg] = 0.
    \end{align}
    The remaining two summands of the functional are identical to CBO, and their convergence has already been obtained in \cite[Theorem 3.3]{huangcbo}, i.e.\ 
    it holds
    \begin{equation}
        \label{eq:summand1}
        \underset{J\to\infty}{\lim} \Ec{\big|\left(\int_{\R^d}\varphi( x)\mathrm{d}\rho^J_t( x)-\int_{\R^d}\varphi( x)\mathrm{d}\rho^J_0( x)\right)-\left(\int_{\R^d}\varphi( x) \mathrm{d}\rho_t( x)-\int_{\R^d}\varphi( x)\mathrm{d}\rho_0( x)\right)\big|} = 0
      \end{equation}
      and
      \begin{equation}
        \label{eq:summand2} \underset{J\to\infty}{\lim} \Ec{\Big|\int_0^t \int_{\R^d}( x-\M{\rho^J_s})\cdot\nabla\varphi( x)\mathrm{d}\rho^J_s( x)\mathrm{d}s-\int_0^t\int_{\R^d}( x-\M{\rho_s})\cdot\nabla\varphi( x)\mathrm{d}\rho_s( x)\mathrm{d}s\Big|} = 0. 
    \end{equation}
    Equations \eqref{eq:summand3}, \eqref{eq:summand1}, and 
    \eqref{eq:summand2} together
    with
    the triangle inequality
    give
    \begin{equation}
        \underset{J\to\infty}{\lim} \E{|\F{\overline\rho^J}-\F{\overline\rho}|} = 0,
    \end{equation}
    which is the desired result.
\end{proof}

\begin{proof}[Proof of Theorem~\ref{thm:weakly_conv_fpe}] The proof is a direct adaptation of \cite[Theorem 1.1]{huangcbo}.
  \mk{Considering the measure flow $\overline\rho=(\rho_t)_{t\in[0,T]}$ of time marginals of the limit $\rho$, we must check the two conditions in the definition of a weak solution. We begin with the first point, which requires checking that $t\mapsto \rho_t$ is} continuous in time in the sense
  of \eqref{eq:time_continuity_condition}: For $\phi\in{C}_b(\R^d)$
  and $t_n\to t$, dominated convergence yields
    \begin{equation*}
      \int_{C([0,T]; \R^d)}\phi({f}(t_n))\mathrm{d}\rho({f}) \overset{n\to\infty}{\to} \int_{C([0,T]; \R^d)} \phi({f(t)}) \mathrm{d}\rho({f}).
    \end{equation*}
    With the definition of the evaluation map $e_t:C([0,T], \mathbb R^d)\to \mathbb R^d, f\mapsto f(t)$ and the time marginals $\rho_t:=(e_t)_\#\rho=\rho(e_t^{-1}(\cdot))$ one can equivalently write the above as  
    \begin{equation*}
      \int_{\R^d}\phi(x)\mathrm{d}\rho_{t_n}(x) \overset{n\to\infty}{\to} \int_{\R^d}\phi(x)\mathrm{d}\rho_t(x)
    \end{equation*}
    using change of variables, which is applicable because $\phi$ is measurable. 
    
    The second part of the Definition now follows straightforwardly by combining the previously obtained convergence results from Lemma \ref{lem:funcconvzero} and Lemma \ref{lem:funcconvlimit}:
    \begin{align}
      |\F{\overline\rho}| &= \E{|\F{\overline\rho}|} \\
        &= \E{|\F{\overline\rho}-\F{\overline\rho^J}+\F{\overline\rho^J}|} \\
        &\leq \E{|\F{\overline\rho}-\F{\overline\rho^J}|} + \E{|\F{\overline\rho^J}|} \\
        &\leq \E{|\F{\overline\rho}-\F{\overline\rho^J}|} + \frac{C}{\sqrt{J}} \to 0\qquad\text{as }J\to\infty.
    \end{align}
\end{proof}

  We next recall a standard argument to bound the difference
  of the square root of two matrices.
  
\begin{lemma}\label{lem:auxiliary_sqrt_cov_fixed}
  Let $A\in\R^{d\times d}$ be symmetric positive semi-definite and let
  $B\in\R^{d\times d}$ be symmetric positive definite. Then
  \begin{equation*}
    \|\sqrt{A}-\sqrt{B}\|_2\le 2\sqrt{\|B^{-1}\|_2}\|A-B\|_2.
  \end{equation*}
\end{lemma}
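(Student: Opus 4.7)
The plan is to use the classical integral representation of the matrix square root: for any symmetric positive semi-definite $M \in \R^{d\times d}$,
\[
  M^{1/2} = \frac{1}{\pi} \int_0^\infty t^{-1/2}\, M(M+tI)^{-1}\, dt,
\]
which reduces the difference of two square roots to an integral of differences of resolvents that are much easier to estimate. First I would apply this representation to both $A$ and $B$ and use the identity $M(M+tI)^{-1} = I - t(M+tI)^{-1}$, so that the $I$ terms cancel, together with the resolvent identity $(A+tI)^{-1} - (B+tI)^{-1} = (A+tI)^{-1}(B-A)(B+tI)^{-1}$, to obtain
\[
  \sqrt{A} - \sqrt{B} = \frac{1}{\pi} \int_0^\infty t^{1/2}\, (A+tI)^{-1}(A-B)(B+tI)^{-1}\, dt.
\]

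Next I would take operator norms inside the integral (justified by the bounds below) and apply submultiplicativity. Since $A$ is positive semi-definite, $\|(A+tI)^{-1}\|_2 \le 1/t$ for every $t>0$, and since $B$ is positive definite with $\lambda_{\min}(B) = 1/\|B^{-1}\|_2$, we have $\|(B+tI)^{-1}\|_2 \le 1/(\lambda_{\min}(B) + t)$. Hence
\[
  \|\sqrt{A} - \sqrt{B}\|_2 \le \frac{\|A-B\|_2}{\pi} \int_0^\infty \frac{dt}{\sqrt{t}\,(\lambda_{\min}(B) + t)}.
\]

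The remaining scalar integral evaluates to $\pi/\sqrt{\lambda_{\min}(B)}$ via the substitution $t = \lambda_{\min}(B)\, u$ followed by $u = v^2$, which reduces it to $2\int_0^\infty (1+v^2)^{-1}\, dv = \pi$. Combining gives
\[
  \|\sqrt{A} - \sqrt{B}\|_2 \le \frac{\|A-B\|_2}{\sqrt{\lambda_{\min}(B)}} = \sqrt{\|B^{-1}\|_2}\,\|A-B\|_2,
\]
which is in fact a factor of $2$ sharper than the stated inequality, so the lemma follows.

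The main obstacle I expect is justifying the integral representation and the interchange of operator norm with integration; these are standard in matrix analysis (see e.g.\ Bhatia's \emph{Matrix Analysis}), but require confirming that the integrand is absolutely integrable in the operator norm. This is ensured by the same resolvent bounds as above, which show that the integrand behaves like $O(t^{-1/2})$ as $t\to 0^+$ and like $O(t^{-3/2})$ as $t\to\infty$.
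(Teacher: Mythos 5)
Your proof is correct and in fact delivers the sharper constant $1$ rather than $2$. Both you and the paper begin from the same classical integral representation of the matrix square root — the paper writes it as $\sqrt{M}=\frac{2}{\pi}\int_0^\infty (M+t^2 I)^{-1}M\,\mathrm{d}t$, which is your formula after the substitution $t\mapsto t^2$ — and both invoke the resolvent identity. The two arguments diverge in how $\sqrt{A}-\sqrt{B}$ is decomposed. The paper splits the integrand asymmetrically as $(A+t^2I)^{-1}A - (B+t^2I)^{-1}B = \big((A+t^2I)^{-1}-(B+t^2I)^{-1}\big)A + (B+t^2I)^{-1}(A-B)$, bounds each of the two resulting integrals separately by $\|A-B\|_2/\sqrt{\sigma}$ with $\sigma=\|B^{-1}\|_2^{-1}$ (using $\|(A+t^2I)^{-1}A\|_2\le 1$ and $\|(B+t^2I)^{-1}\|_2\le 1/(\sigma+t^2)$), and adds them, which is where the factor of $2$ enters. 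You instead use $M(M+tI)^{-1}=I-t(M+tI)^{-1}$ so that the identity terms cancel pointwise in $t$ before any integration, collapsing the difference to the single symmetric integrand $t^{1/2}(A+tI)^{-1}(A-B)(B+tI)^{-1}$. This sandwich lets you distribute the decay between both resolvents via $\|(A+tI)^{-1}\|_2\le 1/t$ and $\|(B+tI)^{-1}\|_2\le 1/(\sigma+t)$, and the resulting scalar integral evaluates to exactly $\pi/\sqrt{\sigma}$, saving the factor of two. The integrability and norm-interchange checks you flag are the same ones the paper implicitly relies on; for the lemma as stated the paper's cruder two-term bound is of course also sufficient, but your route is tidier.
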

\begin{proof}
  Let $a\ge 0$. Substituting $t=\sqrt{a}\tan(s)$ one can show
  $\frac{2}{\pi}\int_0^\infty \frac{a}{a+t^2}\mathrm{d}
  t=\frac{2}{\pi}\int_{0}^{\pi/2}\sqrt{a} \mathrm{d} s = \sqrt{a}$. Using the
  singular value decomposition (SVD) and applying the above equality to the
  singular values yields the well-known identity
  \begin{equation*}
    \sqrt{A} = \frac{2}{\pi}\int_0^\infty (A+t^2I)^{-1}A \mathrm{d}t,
  \end{equation*}
  where $I\in\R^{d\times d}$ is the identity matrix. Since the same is true for $B$, we find
  \begin{align*}
    \sqrt{A}-\sqrt{B} &= \frac{2}{\pi}\int_0^\infty (A+t^2 I)^{-1}A - (B+t^{2} I)^{-1}B \mathrm{d}t\\
    &=\frac{2}{\pi}\int_0^\infty ((A+t^2 I)^{-1}-(B+t^2 I)^{-1})A \mathrm{d}t +\frac{2}{\pi}\int_0^\infty (B+t^{2} I)^{-1}(A-B) \mathrm{d}t\\
    &=:E+F.
  \end{align*}

  We first bound $\|F\|_2$. Denote by $\sigma=\|B^{-1}\|_2^{-1}>0$ the
  smallest eigenvalue of $B$. The smallest eigenvalue of $B+t^2I$ is
  then $\sigma+t^2$. Thus
  \begin{equation}\label{eq:BptI}
    \|(B+t^{2}I)^{-1}\|_2=\frac{1}{\sigma+t^2}
  \end{equation}
    and
  \begin{equation*}
    \|F\|_2\le \|A-B\|_2\frac{2}{\pi}\int_0^\infty \frac{1}{\sigma+t^2} \mathrm{d} t = \|A-B\|_2\frac{1}{\sqrt{\sigma}}.
  \end{equation*}
  
  To bound $\|E\|_2$, we use the matrix identity $X^{-1}-Y^{-1}=Y^{-1}(Y-X)X^{-1}$
  to obtain
  \begin{align*}
    \|((A+t^2 I)^{-1}-(B+t^2 I)^{-1})A\|_2&\le
    \|(B+t^2 I)^{-1}\|_2\|A-B\|_2\|(A+t^2 I)^{-1}A\|_2\\
    &\le \frac{1}{\sigma+t^2}\|A-B\|_2.
  \end{align*}
  Here we used \eqref{eq:BptI} and the fact that
  $\|(A+t^2 I)^{-1}A\|_2\le 1$, which can be seen using the SVD of
  $A$. As before $\|E\|_2\le \|A-B\|_2/\sqrt{\sigma}$, which concludes
  the proof.
\end{proof}

\begin{proof}[Proof of Lemma~\ref{lem:substitutestability}]
The proof of the first claim is given in \cite[Lemma~3.2]{cboanalysis}. 

For the second claim, let \sw{$\mu$, $\nu\in \mathcal P_6(\R^d)$} with 
\[\sw{\left(\int_{\R^d} | x|^6 \mathrm{d}\mu( x)\right)^{\frac16},~\left(\int_{\R^d} |\widehat x|^6 \mathrm{d}\nu(\widehat x)\right)^{\frac16}\le K}. \]
We define the normalizing constants with respect to the weight function $w_\beta^f$ by
\[ Z_\mu := \int_{\R^d} w_\beta^f( x)\, \mathrm{d}\mu( x)\quad \text{and}\quad Z_\nu := \int_{\R^d} w_\beta^f(\widehat x)\, \mathrm{d}\nu(\mathrm{d} \widehat x).\]
According to
\cite[Lemma~3.1]{cboanalysis}, there exists a constant $c_K>0$ such that
\[ \frac{w_\beta^f( x)}{Z_\mu},~\frac{w_\beta^f( x)}{Z_\nu} \le c_K \]
for all $ x\in\R^d$. By definition of the weighted covariance matrix
\begin{align*}
\C{\mu}-\C{\nu} &= \int_{\R^d} \frac{w_\beta^f( x)}{Z_\mu} x  x^\top \mathrm{d}\mu( x) - \int_{\R^d} \frac{w_\beta^f(\widehat x)}{Z_{\nu}}\widehat x \widehat x^\top \mathrm{d}\nu(\widehat x)+ M_\beta(\mu) M_\beta(\mu)^\top - M_\beta(\nu)M_\beta(\nu)^\top\,.
\end{align*}
Next, we estimate
\begin{align*} 
\|\C{\mu}-\C{\nu}\|_F \le \normc[F]{\int_{\R^d} \frac{w_\beta^f( x)}{Z_\mu} x  x^\top \mathrm{d}\mu( x) - \int_{\R^d} \frac{w_\beta^f(\widehat x)}{Z_{\nu}}\widehat x \widehat x^\top \mathrm{d}\nu(\widehat x)} + \|M_\beta(\mu) M_\beta(\mu)^\top - M_\beta(\nu)M_\beta(\nu)^\top\|_F.
\end{align*}
We will make use of the following estimates for arbitrary vectors $x,y\in\R^d$
\begin{equation} \label{eq:fundamental_estimate}
\| xx^\top -yy^\top \|_F^2 =|y|^2 |x-y|^2+|x|^2 |x-y|^2 + 2\langle y,x-y\rangle \langle x,y-x\rangle \le (|x|+|y|)^2|x-y|^2 
\end{equation}
where we have applied the Cauchy-Schwarz inequality. Hence, we first observe that
\[\|M_\beta(\mu) M_\beta(\mu)^\top - M_\beta(\nu)M_\beta(\nu)^\top\|_F \le (|M_\beta(\mu)|+|M_\beta(\nu)|)|M_\beta(\mu)-M_\beta(\nu)| \le 2(b_1 +b_2 c_K) c_0 W_2(\mu,\nu), \]
where we have used \cite[Lemma~3.2]{cboanalysis} and Lemma~\ref{lem:estimate_weightedmean1}. Secondly, let $\pi$ be an arbitrary coupling of $\mu$ and $\nu$ such that we can write
\begin{align*}
\normc[F]{\int_{\R^d} \frac{w_\beta^f( x)}{Z_\mu} x  x^\top \mathrm{d}\mu( x) - \int_{\R^d} \frac{w_\beta^f(\widehat x)}{Z_{\nu}}\widehat x \widehat x^\top \mathrm{d}\nu(\widehat x)} &= \normc[F]{\int_{\R^d}\int_{\R^d} \left(\frac{w_\beta^f( x)}{Z_\mu} x  x^\top  -  \frac{w_\beta^f(\widehat x)}{Z_{\nu}}\widehat x \widehat x^\top \right) \mathrm{d}\pi( x,\widehat x)}\\
&\le \int_{\R^d}\int_{\R^d} \normc[F]{\frac{w_\beta^f( x)}{Z_\mu} x  x^\top  -  \frac{w_\beta^f(\widehat x)}{Z_{\nu}}\widehat x \widehat x^\top} \mathrm{d}\pi( x,\widehat x)\,.
\end{align*}
The outline of the remaining proof follows similar steps as the proof of \cite[Lemma~3.2]{cboanalysis}. In particular, we use triangle inequality to bound the integrand by
\begin{align}
    \normc[F]{\frac{w_\beta^f( x)}{Z_\mu} x  x^\top  -  \frac{w_\beta^f(\widehat x)}{Z_{\nu}}\widehat x \widehat x^\top} &\le \| x  x^\top - \widehat  x\widehat x^\top \|_F \left|\frac{w_\beta^f(\widehat x)}{Z_\mu} \right| \label{eq:Wbound1}\\ &\quad+ \| x x^\top \|_F \frac{|w_\beta^f( x)-w_\beta^f(\widehat x)|}{Z_\mu}\label{eq:Wbound2}\\ &\quad + \|\widehat x\widehat x^\top\|_F |w_\beta(\widehat x)|  \frac{|\int_{\R^d}\int_{\R^d} w_\beta^f(y)-w_\beta^f(\widehat y) d\pi(y,\widehat y)|}{Z_\mu Z_\nu}\, ,\label{eq:Wbound3}
\end{align}
where we used the equality
\[\frac{w_\beta^f( x)}{Z_\mu} x  x^\top  -  \frac{w_\beta^f(\widehat x)}{Z_{\nu}}\widehat x \widehat x^\top = ( x x^\top - \widehat  x \widehat x^\top ) \frac{w_\beta^f(\widehat  x)}{Z_\mu} +  x x^\top \frac{w_\beta^f( x)-w_\beta^f(\widehat  x)}{Z_\mu} + \widehat  x \widehat  x^\top w_\beta^f(\widehat x) \left(\frac{1}{Z_\mu} - \frac{1}{Z_\nu} \right)\]
and 
\[\frac{1}{Z_\mu} - \frac{1}{Z_\nu} = \frac{\int_{\R^d}\int_{\R^d} w_\beta^f(y)-w_\beta^f(\widehat y) \mathrm{d}\pi(y,\widehat y)}{Z_\mu Z_\nu}\,. \]
We will integrate over and bound each tearm in \eqref{eq:Wbound1}--\eqref{eq:Wbound3} separately. We start with \eqref{eq:Wbound1} and apply \eqref{eq:fundamental_estimate} which yields
\begin{align*} 
\int_{\R^d}\int_{\R^d} \| x  x^\top - \widehat  x x^\top \|_F \left|\frac{w_\beta^f(\widehat x)}{Z_\mu} \right| \mathrm{d}\pi( x,\widehat x) &\le c_K \int_{\R^d}\int_{\R^d} (| x|+|\widehat x|) | x-\widehat x|\mathrm{d}\pi( x,\widehat x)\\
&\le c_K \left(\int_{\R^d}\int_{\R^d} (| x|+|\widehat x|)^2\mathrm{d}\pi( x,\widehat x)\right)^{\frac12} \left(\int_{\R^d}\int_{\R^d} (| x-\widehat x|)^2\mathrm{d}\pi( x,\widehat x)\right)^{\frac12} \\
&\le 4 c_K K \left(\int_{\R^d}\int_{\R^d} (| x-\widehat x|)^2\mathrm{d}\pi( x,\widehat x)\right)^{\frac12}\,,
\end{align*}
where we have applied Hölder's inequality in the second line. For the second term \eqref{eq:Wbound2} we apply Assumption~\eqref{asp:costfuns} to derive
\begin{align*}  
\int_{\R^d}\int_{\R^d} \| x x^\top \|_F \frac{|w_\beta^f( x)-w_\beta^f(\widehat x)|}{Z_\mu} \mathrm{d}\pi( x,\widehat x) &\le 2c_K\int_{\R^d}\int_{\R^d} \| x x^\top \|_F (| x|+|\widehat x|)| x-\widehat  x| \mathrm{d}\pi( x,\widehat x)\\
&= 2c_K\int_{\R^d}\int_{\R^d} (| x|^3+| x|^2|\widehat x|) | x-\widehat  x| \mathrm{d}\pi( x,\widehat x)\\
&\le 2c_K \left( \int_{\R^d}\int_{\R^d} (| x|^3+| x|^2|\widehat x|)^2 \mathrm{d}\pi( x, \widehat  x) \right)^{\frac12} \left( \int_{\R^d}\int_{\R^d} | x-\widehat x|^2 \mathrm{d}\pi( x, \widehat  x) \right)^{\frac12} \\
&\le 2c_K \tilde K \left( \int_{\R^d}\int_{\R^d} | x-\widehat x|^2 \mathrm{d}\pi( x, \widehat  x) \right)^{\frac12},
\end{align*}
where we once again used Hölder's inequality in the third line and afterwards 
\sw{the assumption on bounded second moments.}
Finally, applying Hölder's inequality once more we estimate \eqref{eq:Wbound3} by
\begin{align*}
    \int_{\R^d}\int_{\R^d}&\|\widehat x\widehat x^\top\|_F |w_\beta(\widehat x)|  \frac{|\int_{\R^d}\int_{\R^d} w_\beta^f(y)-w_\beta^f(\widehat y) \mathrm{d}\pi(y,\widehat y)|}{Z_\mu Z_\nu} \mathrm{d}\pi( x,\widehat x)\\ &\le c_K^2 \beta \text{Lip}(f) \int_{\R^d}\int_{\R^d} \|\widehat  x\widehat x^\top\|_F \mathrm{d}\pi( x,\widehat x) \int_{\R^d}\int_{\R^d} (|y|+|\widehat y|) |y-\widehat y| \mathrm{d}\pi(y,\widehat y)\\
    &\le c_K^2 \beta \text{Lip}(f) 4K\int_{\R^d}\int_{\R^d} | x|^2 \mathrm{d}\pi( x,\widehat x) \left(\int_{\R^d}\int_{\R^d} |y-\widehat y|^2 \mathrm{d}\pi(y,\widehat y)\right)^{\frac12} \\
    &\le c_K^2 \beta \text{Lip}(f) 4 K^2 \left(\int_{\R^d}\int_{\R^d} |y-\widehat y|^2 \mathrm{d}\pi(y,\widehat y)\right)^{\frac12}\,.
\end{align*}
Taking the infimum over all couplings of $\mu$ and $\nu$ yields the claim. 
\end{proof}

\begin{proof}[Proof of Theorem~\ref{thm:mckeanexistence}]
We begin by linearizing the process. Consider for fixed $\rho_0\in\mathcal{P}_4(\R^d)$ and some given $(u, D)\in C([0,T],\R^d\times\R^{d\times d})$ the following linear SDE
\begin{align}\label{eq:linearizedmckean}
    \mathrm{d}Y_t &= -(Y_t-u_t) \mathrm{d}t + \sqrt{2\Li} D_t \mathrm{d}W_t,\quad\text{where}\quad \mathbb{P}^{Y_0}=\rho_0.
\end{align}
By standard SDE theory, e.g. \cite[Thm. 5.2.1]{oksendal}, this SDE has a unique solution with laws $\nu_t=\mathbb{P}^{Y_t}, t\geq0$. More precisely, for every $t$, $\nu_t$ is the law of $Y_t$, leading to the function $\nu \in C([0,T],\mathcal{P}(\R^d))$.

We then define a mapping from $(u, D)$ to the continuous function which consists of the regularized mean and regularized covariance of $\nu_t$ at every timestep:
\begin{equation}
  \mathcal{T}:=
  \begin{cases}
  C([0,T],\R^d\times\R^{d\times d})\to C([0,T],\R^d\times \R^{d\times d})\\
    (u,D) \mapsto 
    \big(\M{\nu_t},\sqrt{\C{\nu_t}}\big)_{t\in[0,T]},
    \end{cases}
\end{equation}
for which we prove the conditions of Schaefer's Fixed Point Theorem \eqref{thm:schaefersfpt}. The well-definedness of the mapping follows firstly from the ranges of $\mathcal{M}_\beta$ and $\mathcal{C}_\beta$, and secondly from their respective (Hölder) continuitiy, which we prove below.
Theorem \ref{thm:schaefersfpt} is sufficient to go from the above linearized SDE \eqref{eq:linearizedmckean} to the nonlinear McKean process as a fixed point of the mapping $\mathcal{T}$ ensures that $(\mathcal{M}_\beta(\nu_t),\sqrt{\mathcal{C}_\beta(\nu_t)})$ can be plugged in for $(u_t, D_t)$, which yields \sw{a solution of} the nonlinear SDE in \eqref{eq:mckeansde}. 

We now begin with checking the compactness requirement of Theorem \eqref{thm:schaefersfpt}, which we obtain by showing the map $t\mapsto (\M{\nu_t},\sqrt{\C{\nu_t}})$ is Hölder continuous and making use of the compact embedding from Hölder continuous functions to continuous functions.

\sw{By \cite[Thm. 7.1.2]{arnold}}, there exists $c'>0$ s.t. 
\begin{align}
    \E{|Y_t|^6} &\leq (1+\E{|Y_0|^6})\exp{(c't)},
\end{align}
for all $t\in[0,T]$, i.e.\ $\sup_{t\in[0,T]} \int|x|^6d\nu_t\leq K$ for a $K<\infty$, which means we will be able to apply Lemma \ref{lem:substitutestability}. 
Furthermore, we have for $t>s$, $t$, $s \in (0,T)$
\begin{align*}
    \E{|Y_t-Y_s|^2} &= \Ec{\Big(-\int_s^tY_r-u_r \mathrm{d}r\Big)^2}+\Ec{\Big(\int_s^t\sqrt{2\Li} D_r \mathrm{d}W_r\Big)^2}\\ &\quad+2\sqrt{2\Li}\Ec{\Big(\int_s^t(Y_r-u_r) \mathrm{d}r\Big)\Big(\int_s^tD_r \mathrm{d}W_r\Big)}.
\end{align*}
Bounding the three terms respectively using for example the Cauchy-Schwarz inequality, the It\^{o} Isometry and the fact that the third term is a martingale, we have 
\begin{align}
    \E{|Y_t-Y_s|^2} &{\leq} |t-s|\Ec{\int_s^t|Y_r-u_r|^2 \mathrm{d}r}+2\Li\Ec{\int_s^tD_r^2\mathrm{d}r}+ 0 \\
    &\leq 2(T+2\Li)(K+\|u\|_\infty^2+\|D^2\|^2_\infty)|t-s|\\
    &=: c|t-s|.
\end{align}
Applying \cite[Lemma 3.2]{cboanalysis} with $\mu=\nu_t$ and $\hat{\mu}=\nu_s$, we arrive at 
\begin{equation}
  |\M{\nu_t}-\M{\nu_s}| \le c_0 W_2(\nu_t,\nu_s) \leq c_0c^{\frac{1}{2}} |t-s|^{\frac{1}{2}}.
\end{equation}
\sw{for $c_0>0$ from Lemma~\ref{lem:substitutestability}.}
Similarly, using the well-known Powers-Størmer inequality \cite{powers1970free}, estimating the resulting Schatten $1$-norm $|[\cdot]|_1$ using the Frobenius norm and applying Lemma \ref{lem:substitutestability} we obtain
\begin{align}
  \|\sqrt{\C{\nu_t}}-\sqrt{\C{\nu_s}}\|_F &\le (|[\C{\nu_t}-\C{\nu_s}]|_1)^{\frac12} \\
  &\le (d^{\frac12}\|\C{\nu_t}-\C{\nu_s}\|_F)^{\frac12} \\
  &\le  d^{\frac14} \sqrt{c_1 W_2(\nu_t,\nu_s)} \\
  &\leq c_1^{\frac12}d^{\frac14}  |t-s|^{\frac{1}{4}}.
\end{align}
This yields the Hölder continuity with exponent  
$\frac{1}4$ of 
\begin{equation}
  f:=\begin{cases}
    [0,T] \to \R^d\times\R^{d\times d} \\
    t \mapsto (\M{\nu_t},\sqrt{\C{\nu_t}}).
    \end{cases}
\end{equation}
The compact embedding 
\begin{align}
    {C}^{0,\frac{1}4}([0,T],\R^d\times\R^{d\times d}) \hookrightarrow C([0,T],\R^d\times\R^{d\times d})
\end{align}
therefore provides the compactness of $\mathcal{T}$ (See \cite[Thm. 10.6]{alt}.).

Computing the second moment using It\^{o}'s formula, we get
\begin{equation}
    \label{eq:timederivativeempirical}
    \frac{d}{dt}\int_{\mathbb{R}^d}| x|^2 \mathrm{d}\rho_t = \int_{\mathbb{R}^d}\Big[2( x-u_t)\cdot  x+\Li\Tr{D_t^2}\Big] \mathrm{d}\rho_t( x).
\end{equation}
Lemma \ref{lem:estimate_weightedcov} then yields the following for the Diffusion term:
\begin{equation*}
        \Tr{D_t^2} = \tau^2\Tr{\C{\rho_t}} \leq  \tau^2 \left(b_1 + b_2 \int_{\R^d}| x|^2 \mathrm{d}\rho_t( x)\right). 
\end{equation*}
Similarly, by Lemma \ref{lem:estimate_weightedmean1} it holds that 
\begin{equation*}
    |u_t|^2 = \tau^2\M{\rho_t}^2  \le
      \tau^2\int_{\R^d}| x|^2 \mathrm{d}L_\beta \rho_t( x) \leq \tau^2\left(b_1 + b_2 \int_{\R^d}| x|^2 \mathrm{d}\rho_t( x)\right).
\end{equation*}
Using Cauchy-Schwarz and the arithmetic-geometric mean inequality, we furthermore obtain
\begin{align}
    -2\int_{\R^d} x\cdot u_t \mathrm{d}\rho_t( x) &\le 2 \int | x\cdot u_t|\mathrm{d}\rho_t( x) \\
    &\le 2\int | x||u_t|\mathrm{d}\rho_t( x) \\
    &\le \int | x|^2 + |u_t|^2 \mathrm{d}\rho_t( x) \\
    &\le \int | x|^2 \mathrm{d}\rho_t( x) + |u_t|^2.
\end{align}
Therefore, we can bound \eqref{eq:timederivativeempirical} as follows:
\begin{align}
    \frac{d}{dt}\int_{\R^d}| x|^2 \mathrm{d}\rho_t( x) &\leq \tau^2b_1\Li +(\tau^2b_2\Li+2) \int_{\mathbb{R}^d} | x|^2 \mathrm{d}\rho_t( x) + |u_t|^2 \\
    &\leq \tau^2b_1(\Li+1) +(\tau^2 b_2(\Li+1)+2)\int_{\R^d}| x|^2 \mathrm{d}\rho_t( x).
\end{align}
Applying Grönwall's inequality yields
\begin{align}
\label{eq:sfpteigenvectorbound}
    \int_{\R^d}| x|^2 \mathrm{d}\rho_t( x) &\leq \tau^2b_1(\Li+1)\exp{(\tau^2b_2(\Li+1)+2)}\int_{\R^d}| x|^2 \mathrm{d}\rho_0( x) < \infty.
\end{align}
We can transfer this estimate back to $(u_t,D_t)$ using Jensen's inequality and Lemma \ref{lem:estimate_weightedcov} respectively, resulting in
\begin{align}
    |u_t|^2 &= \tau^2|\mathcal{M}_\beta(\rho_t)|^2 \leq \tau^2 (b_1+b_2\int_{\R^d}| x|^2 \mathrm{d}\rho_t( x)) \sw{ \le M} < \infty \\
    \text{and}\quad \|D_t^2\|^2_F &= \tau^2 \|{\mathcal{C}_\beta(\rho_t)}\|_F^2 \leq \tau^2(b_1+b_2\int_{\R^d}| x|^{2} \mathrm{d}\rho_t( x))\sw{ \le M}  < \infty,
\end{align}
and thus get upper bounds for $u$ and $D$ by considering the supremum norms $\|u\|_\infty := \sup_{t\in[0,T]}|u_t|$ and $\|D\|_\infty := \sup_{t\in[0,T]}\|D_t\|_F$. Hence, we have verified all conditions to apply Theorem~\ref{thm:schaefersfpt} implying the existence of a fixed point of the mapping $\mathcal T$. 
\end{proof}

\begin{proof}[Proof of Theorem~\ref{thm:mckeanuniqueness}]
We showed above that a fixed point $(u,D)\in C([0,T],\R^d\times\R^d)$ of $\mathcal T$ satisfies $\sw{\|u\|_\infty,\|D\|_\infty\le M}$. Now let $(u,D), (\hat{u},\hat{D})$ be two fixed points of $\mathcal{T}$.
\sw{Recall, that} we have
\begin{align}
    \sw{\|u\|_\infty, \|D\|_\infty  \le  M}, \underset{t\in[0,T]}{\sup}\int_{\R^d}| x|^4 \mathrm{d}\rho_t( x)\leq K < \infty,\\
    \sw{\|\hat{u}\|_\infty, \|\hat{D}\|_\infty \le M}, \underset{t\in[0,T]}{\sup}\int_{\R^d}| x|^4 \mathrm{d}\hat{\rho}_t( x) \leq K < \infty 
\end{align}
Taking the difference of the paths of their corresponding processes $(Y_t)_{t\in[0,T]}$, $(\hat{Y}_t)_{t\in[0,T]}$ yields 
\begin{align}
    Y_t-\hat{Y}_t &=: z_t = z_0 - \int_0^tz_s \mathrm{d}s + \int_0^t(u_s-\hat{u}_s) \mathrm{d}s + \sqrt{2\Li} \int_0^t(D_s-\hat{D}_s) \mathrm{d}W_s. 
\end{align}
We apply It\^o's formula to obtain
\[|z_t|^2 = |z_0|^2 - \int_0^t 2|z_s|^2\,\mathrm{d}s + \int_0^t 2\langle z_s, u_s-\hat u_s\rangle \, \mathrm{d}s +2\lambda^{-1} \int_0^t \Tr{(D_s-{\hat{D}_s})({D_s}-{\hat{D}_s})^\top}\, \mathrm{d}s + \int_0^t \,\mathrm{d}M_s, \]
where $M_s$ is a Martingale such that $\E{\int_0^t \, \mathrm{d}M_s = 0}$.
We apply Cauchy-Schwarz and Young's inequality to get
\[\langle z_s, u_s-\hat u_s\rangle \le \frac12|z_s|^2 + \frac12|u_s-\hat u_s|^2\,.\]
Taking the expectation it follows that
\begin{align}
    \E{|z_t|^2} &\le \E{|z_0|^2}- \int_0^t \E{|z_s|^2}\,\mathrm{d}s + \int_0^t \E{|u_s-\hat u_s|^2}\,\mathrm{d}s + 2\lambda^{-1} \int_0^t \E{ \Tr{({D_s}-{\hat{D}_s})({D_s}-{\hat{D}_s})^\top}}\,\mathrm{d}s\,.
\end{align}
Note that we have interchanged the integral $\int_0^t$ and the expectation $\mathbb E$ using Fubini's theorem. Moreover, $\Tr{({D_s}-{\hat{D}_s})({D_s}-{\hat{D}_s})^\top} = \|{D_s}-{\hat{D}_s} \|_{F}^2$.)
Substituting $\M{\rho_t}$ for $u_t$, $\M{\hat\rho_t}$ for $\hat{u}$, $\sqrt{\C{\rho_t}}$ for $D_t$, and $\sqrt{\C{\hat\rho_t}}$ for $\hat{D}_t$, and using Lemma \ref{lem:substitutestability} and \cite[Lemma 3.2]{cboanalysis}, we obtain
\begin{align}
    \E{|z_t|^2} &\le \E{|z_0|^2}- \int_0^t \E{|z_s|^2}\,\mathrm{d}s + \int_0^t \E{|\M{\rho_s}-\M{\hat\rho_s}|^2}\,\mathrm{d}s \\
    &\quad + 2\lambda^{-1} \int_0^t \E{\|\sqrt{\C{\rho_s}}-\sqrt{\C{\hat\rho_s}}\|_F^2}\,\mathrm{d}s\\
    &\leq \E{|z_0|^2} +(c_0^{\sw{2}}+2\Li c_1\frac{\sw{4d}}{\sw{\bar \sigma}}-1)\int_0^t\E{|z_s|^2}\mathrm{d}s\,.
\end{align}
\sw{Here we used Assumption~\ref{assumpt:evs} and Lemma~\ref{lem:auxiliary_sqrt_cov_fixed} to deduce that 
\begin{align*} 
\|\sqrt{\C{\rho_s}}-\sqrt{\C{\hat\rho_s}}\|_F^2\le d \|\sqrt{\C{\rho_s}}-\sqrt{\C{\hat\rho_s}}\|_2^2 &\le 4d \|\C{\hat\rho_s}^{-1}\|_2 \|\C{\rho_s}-\C{\hat\rho_s}\|_2^2 \\ &\le \frac{4d}{\bar\sigma} \|\C{\rho_s}-\C{\hat\rho_s}\|_F^2\,.
\end{align*}}
Applying Grönwall's inequality and using the fact that $\E{|z_0|^2}=0$ yields the desired $\E{|z_t|^2}=0$, implying uniqueness.
\end{proof}

\begin{proof}[Proof of Prop~\ref{prop:linearizedfpeunique}]
  The existence follows by applying the It\^{o}'s formula to the law of the solution of \eqref{eq:linearizedsde}.

    To obtain the uniqueness, consider for each $t_0\in(0,T]$ and compactly supported smooth test function $\psi\in C_c^\infty(\R^d)$ the Kolmogorov backward equation 
    \begin{align}
        \frac{\partial h_t}{\partial t} &= -\Li \sumi{i=1}{d}\sumi{k=1}{d} (C_t)_{ik} \frac{\partial^2}{\partial x_i\partial x_k}(h_t)_k-(x-u_t)\cdot\nabla\rho_t,\quad (t,x)\in[0,t_0]\times\R^d;\quad h_{t_0} = \psi
    \end{align}
    By \cite[Thm 2.7]{pavliotis}, this linear PDE admits the solution 
    \begin{align}
        \label{eq:bwd_kolm_soln}
        h_t(x) &= \E{\psi(\theta_{t_0}^{t,x})}, t\in[0,t_0],
    \end{align}
    where $(\theta_{t_0}^{t,x})_{0\leq t \leq s \leq t_0}$ is the strong solution to the linear SDE
    \begin{align}
        \mathrm{d}\theta^{t,x}_s &= -(\theta^{t,x}_s-u_s) \mathrm{d}s + \sqrt{2\Li C_s} \mathrm{d}B_s,\qquad \theta_t^{t,x} = x
    \end{align}
    The chain rule provides, for each $(t,x)\in[0,t_0]\times\R^d$:
    \begin{align}
        \nabla_kh_t(x) &= \E{\nabla_k\psi(\theta_{t_0}^{t,x})\nabla_k\theta_{t_0}^{t,x}}.
    \end{align}
     According to \cite[Thm. 4.2]{metivier} we may apply the derivative w.r.t. parameter $k$ to the drift and diffusion coefficients of $d\theta^{t,x}_{t_0}$ separately, and simply obtain
    \begin{align}
        \mathrm{d}\nabla_k(\theta_s^{t,x})_k &= \nabla_k(\theta^{t,x}_s)_k \mathrm{d}s,\quad \nabla_k(\theta^{t,x}_t)_k = 1.
    \end{align}
     We obtain a geometric brownian motion without diffusion and have
     \begin{align}
         \nabla_k(\theta^{t,x}_s)_k =  \exp{(s-t)}
     \end{align}
    Therefore, there exists a constant $c>0$ depending only on $\psi$ s.t.
    \begin{align}
        \underset{(t,x)\in[0,t_0]\times\R^d}{\sup} |\nabla_kh_t(x)| &\leq c\exp{(T)} < \infty, k=1,\ldots,d
    \end{align}
    Furthermore, for $0\leq t < t+\delta < t_0$, we have $\theta_{s}^{t,x} = \theta_{t_0}^{t+\delta,\theta_{t_0}^{t+\delta,x}}$ for $t+\delta < s \leq t_0$ and thus 
    \begin{align}
        \frac{h_{t+\delta}-h_t(x)}{\delta} &= \frac{1}{\delta}\E{\psi(\theta_{t_0}^{t+\delta,x})-\psi(\theta_{t_0}^{t,x})} \\   
        &= \frac{1}{\delta} \E{\psi(\theta_{t_0}^{t+\delta,x})-\psi(\theta_{t_0}^{t+\delta,\theta^{t,x}_{t+\delta}})} \\
        &=\frac{1}{\delta} \E{h_{t+\delta}(x)-h_{t+\delta}(\theta^{t,x}_{t+\delta})} \\
        &=\frac{1}{\delta}\Ec{-\int_t^{t+\delta}\Li \sumi{i=1}{d}\sumi{k=1}{d}(C_s)_{ik} \frac{\partial^2}{\partial x_i\partial x_k}h_{t+\delta}(\theta^{t,x}_s)-(x-u_t)\cdot\nabla h_{t+\delta}(\theta^{t,x}_s) \mathrm{d}s}
    \end{align}
    Since we saw that the law $\rho_t=\mathbb{P}^{\theta_t}$ is a classical solution to the linearized FPE \eqref{eq:linearizedfpe}, the limit $\underset{\delta\to0}{\lim} \frac{h_{t+\delta}-h_t(x)}{\delta}$ exists.

    Now suppose $\overline\rho^1$ and $\overline\rho^2$ are two weak solutions of \eqref{eq:linearizedfpe} with identical initial condition $\rho_0^1=\rho_0^2$. We set \mk{$\delta\rho_t=\rho^1_t-\rho^2_t$}. Now we integrate the solution $h$ to the Kolmogorov backward PDE, which we have seen exists, against this (signed) measure: 
    \begin{align}
         \int h_{t_0} d\delta\rho_{t_0}(x) &= 
            \int_0^{t_0}\int_{\R^d}\partial_sh_s(x) \mathrm{d}\delta \rho_s(x)\mathrm{d}s +\int_0^{t_0}\int_{\R^d}\Li \sumi{i=1}{d}\sumi{k=1}{d} (C_s)_{ik} \frac{\partial^2}{\partial x_i\partial x_k}h_s \mathrm{d}\delta\rho_s(x)\mathrm{d}s\\
            &\qquad+ \int_0^t\int_{\R^d}(x-u_s)\cdot\nabla h_s \mathrm{d}\delta\rho_s(x)\mathrm{d}s \\
            &= \int_0^{t_0}\int_{\R^d}\partial_sh_s(x) \mathrm{d}\delta\rho_s(x)\mathrm{d}s + \int_0^{t_0}\int_{\R^d}-\partial_sh_s(x) \mathrm{d}\delta\rho_s(x)\mathrm{d}s \\
            &= 0
    \end{align}
    Since we chose $h_{t_0}=\psi\in C_c^\infty(\R^d)$, we have $\int_{\R^d}\psi(x)\mathrm{d}\delta\rho_{t_0}(x)=0$, which means $\delta\rho_{t_0}=0$. Since $t_0$ was arbitrary, we have $\overline\rho^1=\overline\rho^2$.
\end{proof}

\begin{proof}[Proof of Corollary~\ref{cor:uniqueness}]
    We construct two linear processes ($\Hat{\theta}^l_t)_{t\in[0,T]} (l=1,2)$ satisfying
    \begin{align}
    \label{eq:linearizedsde}
    d\Hat{\theta}^l_t = -(\Hat{\theta}^l_t-\M{\rho_t^l})dt + \sqrt{2\Li\C{\rho_t^l}}dW_t,
    \end{align}
    with $\Hat{\theta}_0\sim\rho_0$. The laws of these linearized processes $\Hat{\rho}^l_t:=\mathbb{P}^{\Hat{\theta}^l_t}$ are thereby weak solutions to the linear PDE
    \begin{align}
        \label{eq:linearizedfpe}
        \frac{\partial \Hat{\rho}^l_t}{\partial t} &= \Li \sumi{i=1}{d}\sumi{k=1}{d}\frac{\partial^2}{\partial x_i\partial x_k}[\C{{\rho}^l_t}_{ik}\Hat{\rho}^l_t] - \nabla\cdot((x-\M{{\rho}^l_t})\Hat{\rho}^l_t).
    \end{align}
    By assumption, $\rho^l$ also solves the above PDE, since in this case it is identical to the nonlinear FPE. In the subsequent Proposition \ref{prop:linearizedfpeunique}, we show that the weak solution to the above SDE is unique. From these two facts follows that $\Hat{\rho}^l=\rho^l$. But $\rho^l=\mathbb{P}^{\overline{\theta}^l}$ is the law of the McKean Process
    \begin{align}
        \mathrm{d}\overline{\theta}_t = -(\overline{\theta}_t-\M{\rho^l_t}) \mathrm{d}t + \sqrt{2\Li\C{\rho^l_t}} \mathrm{d}W_t,
    \end{align}
    which is solvable uniquely up to $\mathbb{P}$-indistinguishability according to Theorem \ref{thm:mckeanuniqueness}.
    Thus, $(\Hat{\theta}^l_t)_{t\in[0,T]} = (\overline{\theta}^l_t)_{t\in[0,T]}$ is a solution to the McKean Process in \eqref{eq:CBS_McKean}. Therefore
    \begin{equation}
        0 = \underset{t\in[0,T]}{\sup}\E{|\overline{\theta}^1_t-\overline{\theta}^2_t|} 
          = \underset{t\in[0,T]}{\sup}\E{|\Hat{\theta}^1_t-\Hat{\theta}^2_t|} 
          \geq \underset{t\in[0,T]}{\sup} W_2(\Hat{\rho}^1_t,\Hat{\rho}^2_t) 
          \geq \underset{t\in[0,T]}{\sup} W_2({\rho}^1_t,{\rho}^2_t),
    \end{equation}
    which is the desired result.
\end{proof}

\section{{Additional results used in the proof}}\label{app:theorems}

\begin{theorem}[{\cite[Prop 2.2(ii)]{sznitman}}] \label{thm:chaos} Let $E$ be a Polish Space, and $\rho^J=\frac1J\sumi{i=1}{J}\delta_{\theta^{j,J}}$ be $\PM(E)$-valued random variables, where $(\theta^{1,J},\dots,\theta^{J,J})$ is distributed according to $u_J$. Assume that $(u_J)_{J\in\mathbb N}$ is a sequence of symmetric probabilities on $E^N$. Then, $(\rho^J)_{J\in\mathbb N}$ is tight if and only if $(\mathbb P^{\theta^{1,J}})_{J\in\mathbb N}$ is tight.
\end{theorem}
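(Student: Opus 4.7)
The plan is to leverage the standard Prokhorov characterization of compactness in $\PM(E)$: a subset $\mathcal K\subset\PM(E)$ is relatively compact if and only if it is tight, i.e.\ for every $\epsilon>0$ there exists a compact $K\subset E$ with $\mu(K)\ge 1-\epsilon$ for all $\mu\in\mathcal K$. Both directions of the equivalence will be reduced to bounds on the intensity measure, for which the symmetry of $u_J$ supplies the fundamental identity
\begin{equation}
    \mathbb E[\rho^J(A)] = \frac{1}{J}\sum_{j=1}^J \mathbb P(\theta^{j,J}\in A) = \mathbb P(\theta^{1,J}\in A)
\end{equation}
valid for every Borel $A\subset E$.

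For the forward direction, assuming tightness of $(\rho^J)_{J\in\mathbb N}$ on $\PM(E)$, I would fix $\epsilon>0$ and first choose a relatively compact $\mathcal K\subset\PM(E)$ with $\mathbb P(\rho^J\notin\mathcal K)\le\epsilon/2$ uniformly in $J$. Applying Prokhorov to $\mathcal K$ produces a compact $K\subset E$ with $\mu(K^c)\le\epsilon/2$ for all $\mu\in\mathcal K$, and the estimate
\begin{equation}
    \mathbb P(\theta^{1,J}\notin K) = \mathbb E[\rho^J(K^c)] \le \mathbb E[\rho^J(K^c)\mathbf 1_{\rho^J\in\mathcal K}] + \mathbb P(\rho^J\notin\mathcal K) \le \epsilon
\end{equation}
immediately yields tightness of $(\mathbb P^{\theta^{1,J}})_{J\in\mathbb N}$.

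For the converse, given tightness of $(\mathbb P^{\theta^{1,J}})_{J\in\mathbb N}$ and $\epsilon>0$, I would construct a compact subset of $\PM(E)$ of the classical form
\begin{equation}
    \mathcal K_\epsilon = \{\mu\in\PM(E) \,:\, \mu(K_n^c)\le 1/n \text{ for all } n\in\mathbb N\},
\end{equation}
where each $K_n\subset E$ is chosen compact with $\mathbb P(\theta^{1,J}\in K_n^c)\le \epsilon/(n\, 2^n)$ uniformly in $J$, as permitted by the tightness assumption. Markov's inequality combined with the intensity identity gives $\mathbb P(\rho^J(K_n^c)>1/n)\le\epsilon/2^n$, and a union bound over $n$ produces $\mathbb P(\rho^J\notin\mathcal K_\epsilon)\le\epsilon$.

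The main technical point requiring care is verifying that $\mathcal K_\epsilon$ is closed in the weak topology of $\PM(E)$, and hence compact rather than merely relatively compact. This will follow from the portmanteau theorem: since $K_n$ is compact in the Polish space $E$ its complement $K_n^c$ is open, so $\mu\mapsto\mu(K_n^c)$ is lower semicontinuous; therefore if $\mu_k\to\mu$ weakly with each $\mu_k\in\mathcal K_\epsilon$, then $\mu(K_n^c)\le\liminf_k\mu_k(K_n^c)\le 1/n$ for every $n$, showing $\mu\in\mathcal K_\epsilon$. Together with tightness of $\mathcal K_\epsilon$, this delivers the required compactness.
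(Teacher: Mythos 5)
The paper cites this as an external result (Sznitman, Prop.\ 2.2(ii)) and does not reproduce a proof, so there is no in-paper argument to compare against. Your proof is correct and is essentially the standard Sznitman argument: the symmetry-induced intensity identity $\mathbb E[\rho^J(A)]=\mathbb P(\theta^{1,J}\in A)$ is exactly the right reduction, the forward direction via Prokhorov on the compact $\mathcal K\subset\mathcal P(E)$ is clean, and in the converse you correctly build the compact set $\mathcal K_\epsilon\subset\mathcal P(E)$ and verify closedness via lower semicontinuity of $\mu\mapsto\mu(K_n^c)$ on the open set $K_n^c$ (portmanteau), which is the one step that is easy to gloss over. No gaps.
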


\begin{theorem}[{\cite[Thm 1]{aldous}}] \label{thm:aldous}  Let $\{X^n\}_{n\in\mathbb{N}}$ be a sequence of stochastic processes on a common probability space $(\Omega,\mathcal{F},\mathbb{P})$ with $X^n: \Omega\to C([0,T];\mathbb{R}^d)$. The sequence is tight on $C([0,T];\mathbb{R}^d)$ if the following two conditions hold:

 \begin{enumerate}
     \item  $\{\mathbb{P}^{X^n_t}\}_{n\in\mathbb{N}}$ is tight on $\R^d$ for every $t\in[0,T]$.
     \item $\forall\epsilon> 0,\eta> 0$ and $m$ there exists $\delta_0\in(0,T)$ and $n_0\in\mathbb{N}$ s.t., when $ n\geq n_0$, $\delta\leq\delta_0$ and $\tau$ is a $\sigma(X^n_s,s\in[0,T])$-stopping time with finite range and $\tau\leq m$, then:
     \begin{equation}\label{eq:aldous2}
         \mathbb{P}(|X^n_{\tau+\delta}-X_\tau^n|\geq\epsilon) \leq \eta.
     \end{equation}
 \end{enumerate}
\end{theorem}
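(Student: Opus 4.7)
The plan is to reduce the statement to the classical Arzel\`a--Ascoli characterization of relative compactness on $C([0,T];\R^d)$: by Prokhorov's theorem, tightness of $\{\mathbb{P}^{X^n}\}_{n\in\mathbb{N}}$ amounts to (a) tightness of the marginals $\{\mathbb{P}^{X^n_0}\}_n$ on $\R^d$, together with (b) the asymptotic modulus-of-continuity estimate
\[
\lim_{\delta\to 0}\,\limsup_{n\to\infty}\,\mathbb{P}\bigl(w(X^n,\delta)>\epsilon\bigr) = 0 \qquad\text{for every }\epsilon>0,
\]
where $w(x,\delta):=\sup\{|x_s - x_t|:s,t\in[0,T],\,|s-t|\le\delta\}$ denotes the modulus of continuity. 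Assertion~(a) is immediate from condition~1 applied at $t=0$, so the essential task is extracting~(b) from the Aldous-type hypothesis in condition~2.

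For~(b) I would follow the standard strategy pioneered by Aldous. Fix $\epsilon,\eta>0$ and invoke condition~2 with $m=T$ to obtain $\delta_0\in(0,T)$ and $n_0\in\mathbb{N}$ such that $\mathbb{P}(|X^n_{\tau+\delta}-X^n_\tau|\ge\epsilon/6)\le\eta'$ whenever $n\ge n_0$, $\delta\le\delta_0$, and $\tau\le T$ is a finite-range stopping time, where $\eta'>0$ is a small parameter to be fixed at the end. The key intermediate step is to upgrade this \emph{deterministic}-increment bound to a \emph{random}-increment one: for any pair of stopping times $\sigma\le\tau\le\sigma+\delta$ with $\delta\le\delta_0$ one should obtain $\mathbb{P}(|X^n_\tau-X^n_\sigma|\ge\epsilon/3)\le c(\eta')$, with $c(\eta')\to 0$ as $\eta'\to 0$. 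This is achieved by dyadically subdividing $[\sigma,\tau]$ into $2^M$ consecutive stopping-time pieces of length at most $\delta\,2^{-M}$, applying condition~2 to each piece, and passing to the limit $M\to\infty$ using sample-path continuity; additional care is required to approximate the intermediate hitting-type stopping times from above by finite-range ones in order to fall within the hypothesis of condition~2.

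Equipped with the random-increment estimate, I would partition $[0,T]$ into $N=\lceil T/\delta\rceil$ cells $[t_k,t_{k+1}]$ of length at most $\delta$ and introduce the hitting stopping times $\sigma_k^n:=\inf\{s\ge t_k:|X^n_s-X^n_{t_k}|\ge\epsilon/3\}\wedge t_{k+1}$. A short case analysis splitting pairs $s<t$ with $t-s\le\delta$ into same-cell and adjacent-cell cases yields the event inclusion $\{w(X^n,\delta)>\epsilon\}\subset\bigcup_{k=0}^{N-1}\{|X^n_{\sigma_k^n}-X^n_{t_k}|\ge\epsilon/3\}$. Since each $\sigma_k^n$ is a stopping time in $[t_k,t_k+\delta]$, the random-increment estimate bounds each summand by $c(\eta')$, and a union bound yields $\mathbb{P}(w(X^n,\delta)>\epsilon)\le N\,c(\eta')$. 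Choosing $\eta'$ small enough that $N\,c(\eta')\le\eta$ then delivers~(b) for all $n\ge n_0$; tightness of the finitely many remaining laws $\mathbb{P}^{X^n}$ with $n<n_0$ is automatic since $C([0,T];\R^d)$ is Polish. The principal obstacle, as noted, is the deterministic-to-random increment upgrade and the associated bookkeeping around the finite-range stipulation on stopping times; once that step is in hand, the rest of the argument is a routine Arzel\`a--Ascoli-style estimate.
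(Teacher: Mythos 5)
This theorem is quoted from Aldous' paper and is not proved in the present manuscript, so there is no in-paper argument to compare against; I assess your sketch on its own terms. Your global architecture is the standard and correct one: reduce tightness on $C([0,T];\R^d)$ to tightness of $\{\mathbb{P}^{X^n_0}\}_n$ plus the asymptotic modulus-of-continuity estimate, partition $[0,T]$ into cells of length $\delta$, control $w(X^n,\delta)$ by the hitting times $\sigma_k^n$, and dispose of the finitely many indices $n<n_0$ by polishness. The event inclusion via the same-cell/adjacent-cell case analysis with threshold $\epsilon/3$ is also fine, and you correctly flag that the hitting times must be approximated from above by finite-range stopping times to fall under condition~2.

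However, the step you yourself identify as the crux --- upgrading the bound on $|X^n_{\tau+\delta}-X^n_\tau|$ for \emph{deterministic} $\delta$ to a bound on $|X^n_\tau-X^n_\sigma|$ for two stopping times $\sigma\le\tau\le\sigma+\delta$ --- is wrong as described. Dyadically subdividing $[\sigma,\tau]$ into $2^M$ stopping-time pieces does not help: each piece is again an increment between two stopping times, so ``applying condition~2 to each piece'' is circular (condition~2 only addresses a stopping time plus a deterministic shift), and even granting it, a union bound over $2^M$ pieces with error $\eta'$ each yields $2^M\eta'\to\infty$; appealing to sample-path continuity in the limit $M\to\infty$ gives no bound uniform in $n$, which is precisely what is needed. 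The correct device is Aldous' averaging argument: writing $|X_\tau-X_\sigma|\le|X_{\sigma+u}-X_\sigma|+|X_{\sigma+u}-X_\tau|$ for $u\in[\delta,2\delta]$ (so that $\sigma+u\ge\tau$), the first term is controlled directly by condition~2, while for the second one integrates over $u$ and uses Fubini,
\begin{equation}
  \frac{1}{\delta}\int_{\delta}^{2\delta}\Pc{|X_{\sigma+u}-X_\tau|\ge\tfrac{\epsilon}{2}}\,\mathrm{d}u
  \;\le\;\frac{1}{\delta}\int_{0}^{2\delta}\Pc{|X_{\tau+v}-X_\tau|\ge\tfrac{\epsilon}{2}}\,\mathrm{d}v\;\le\;2\eta',
\end{equation}
after the change of variables $v=\sigma+u-\tau$; hence some particular $u^*$ makes both probabilities small simultaneously, with no union bound over a growing number of pieces. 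Without this (or an equivalent) argument, the proof does not close.
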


The solvability of the McKean-Vlasov type SDE is obtained by use of the following Theorem.
 
\begin{theorem}[{\cite[Thm 11.3]{gilbarg}}] \label{thm:schaefersfpt} Schaefer's Fixed Point Theorem: Let $\mathcal{T}:\mathfrak{B}\to\mathfrak{B}$ be a compact mapping of a Banach Space $\mathfrak{B}$ into itself and suppose there is a constant $M$ such that if $x\in\mathfrak{B}$ and $\tau\in[0,1]$ with $x=\tau\mathcal{T}x$, then $\|x\|_\mathfrak{B}<M$. 
Then $\mathcal{T}$ has a fixed point. 
\end{theorem}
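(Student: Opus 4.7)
The plan is to reduce the statement to Schauder's fixed point theorem via a radial retraction argument that confines the dynamics to the closed ball of radius $M$. I would define $\overline{B_M} := \{x \in \mathfrak{B} : \|x\|_{\mathfrak{B}} \le M\}$, which is convex, closed and bounded, and introduce the continuous retraction $r:\mathfrak{B}\to\overline{B_M}$ given by $r(y)=y$ if $\|y\|_{\mathfrak{B}}\le M$ and $r(y) = My/\|y\|_{\mathfrak{B}}$ otherwise (the identity on $\overline{B_M}$, globally $1$-Lipschitz). The composite map $S:\overline{B_M}\to\overline{B_M}$ defined by $S(x) := r(\mathcal{T}(x))$ is the object to which I would apply Schauder.

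Next, I will check the hypotheses of Schauder's fixed point theorem for $S$. Since $\mathcal{T}$ is compact on $\mathfrak{B}$ and $\overline{B_M}$ is bounded, $\mathcal{T}(\overline{B_M})$ is relatively compact; composition with the continuous retraction $r$ preserves this property while forcing the image into $\overline{B_M}$. Thus $S$ is a continuous, compact self-map of the convex, bounded, closed set $\overline{B_M}$, and Schauder yields a fixed point $x_* \in \overline{B_M}$.

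The last step is to upgrade this to a fixed point of $\mathcal{T}$, and here the a priori bound enters. If $\|\mathcal{T}(x_*)\|_{\mathfrak{B}} \le M$, then $r$ acts as the identity on $\mathcal{T}(x_*)$, so $x_* = S(x_*) = \mathcal{T}(x_*)$ and we are done. Otherwise $\|\mathcal{T}(x_*)\|_{\mathfrak{B}} > M$, and setting $\tau := M/\|\mathcal{T}(x_*)\|_{\mathfrak{B}} \in (0,1)$ gives
\[
x_* = S(x_*) = \tau\,\mathcal{T}(x_*), \qquad \|x_*\|_{\mathfrak{B}} = M,
\]
which contradicts the hypothesis that every solution of $x = \tau \mathcal{T}(x)$ with $\tau\in[0,1]$ satisfies the \emph{strict} inequality $\|x\|_{\mathfrak{B}} < M$.

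The main subtlety is the interplay between the retraction and the a priori bound: one specifically needs the strict inequality in the hypothesis to rule out the boundary case $\|x_*\|_{\mathfrak{B}}=M$ produced by the retraction. An alternative route would be a Leray--Schauder degree argument using the homotopy $H(x,\tau) = \tau\mathcal{T}(x)$, whose zeros of $I - H(\cdot,\tau)$ stay away from $\partial B_M$ by hypothesis, so that the degree is homotopy invariant and agrees with that of the identity; the Schauder-plus-retraction route above is however more elementary and keeps the argument at the level of compact fixed-point theorems.
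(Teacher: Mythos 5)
Your proof is correct, and it is the standard textbook derivation of Schaefer's theorem from Schauder's theorem via the radial retraction onto $\overline{B_M}$; you correctly identify where the a priori bound and the strictness of the inequality are used to exclude the boundary case produced by the retraction. Note, however, that the paper itself does not prove this statement --- it is listed in the appendix purely as a cited auxiliary result (Thm.~11.3 in Gilbarg--Trudinger), so there is nothing in the paper to compare your argument against; your write-up supplies a proof where the paper relies on the reference.
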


\end{document}